\documentclass[11pt]{article}
\usepackage{authblk}
\usepackage[toc,page]{appendix}
\usepackage[top=2cm, bottom=2cm, left=2cm, right=2cm]{geometry}

\usepackage{color}
\usepackage{helvet}         
\usepackage{courier}        
\usepackage{type1cm}        

\usepackage{framed} 
\usepackage{tikz}
\usepackage{makeidx}         
\usepackage{graphicx}        
\usepackage{multicol}        
\usepackage[bottom]{footmisc}

\usepackage{amsmath}
\usepackage{amssymb}
\usepackage{bbold}
\usepackage{amsthm}
\usepackage{subcaption}
\usepackage{sidecap}
\usepackage{floatrow}
\usepackage{pdflscape}
\usepackage{comment}
\usepackage[font=small]{caption}
\usepackage{enumitem}
\usepackage{esint}

\usepackage{scalerel}

\newtheorem{theorem}{Theorem}
\newtheorem{corollary}[theorem]{Corollary}
\newtheorem{lemma}[theorem]{Lemma}

\newtheorem{proposition}[theorem]{Proposition}

\theoremstyle{definition}

\theoremstyle{remark}
\newtheorem{remark}[theorem]{Remark}

\numberwithin{theorem}{section}
\numberwithin{figure}{section}
\numberwithin{equation}{section}

\begin{document}

\title{Gaussian free field with logarithmic singularities \\and reduced Green's energy}
\bigskip{}
\author[1]{Mingchang Liu\thanks{liumc\_prob@163.com}}
\author[2]{Titus Lupu\thanks{titus.lupu@sorbonne-universite.com}}
\author[3]{Hao Wu\thanks{hao.wu.proba@gmail.com. ORCID:0000-0003-4265-7417.}}
\affil[1]{Captial Normal University, China}
\affil[2]{Sorbonne Universit\'e and Universit\'e Paris Cit\'e, CNRS, France}
\affil[3]{Tsinghua University, China}
\date{}

%
%


\global\long\def\U{\mathbb{U}}
\global\long\def\T{\mathbb{T}}
\global\long\def\HH{\mathbb{H}}
\global\long\def\R{\mathbb{R}}
\global\long\def\C{\mathbb{C}}
\global\long\def\N{\mathbb{N}}
\global\long\def\Z{\mathbb{Z}}
\global\long\def\E{\mathbb{E}}
\global\long\def\PP{\mathbb{P}}
\global\long\def\QQ{\mathbb{Q}}
\global\long\def\A{\mathbb{A}}
\global\long\def\one{\mathbb{1}}

\global\long\def\Green{\mathrm{G}}
\global\long\def\CR{\mathrm{CR}}
\global\long\def\ST{\mathrm{ST}}
\global\long\def\SF{\mathrm{SF}}
\global\long\def\cov{\mathrm{cov}}
\global\long\def\dist{\mathrm{dist}}
\global\long\def\SLE{\mathrm{SLE}}
\global\long\def\hSLE{\mathrm{hSLE}}
\global\long\def\CLE{\mathrm{CLE}}
\global\long\def\GFF{\mathrm{GFF}}
\global\long\def\inte{\mathrm{int}}
\global\long\def\ext{\mathrm{ext}}
\global\long\def\inrad{\mathrm{inrad}}
\global\long\def\outrad{\mathrm{outrad}}
\global\long\def\dimH{\mathrm{dim}}
\global\long\def\capa{\mathrm{cap}}
\global\long\def\diam{\mathrm{diam}}
\global\long\def\sign{\mathrm{sgn}}
\global\long\def\cat{\mathrm{Cat}}
\global\long\def\cst{\mathrm{C}}
\global\long\def\free{\mathrm{free}}
\global\long\def\hF{{}_2\mathrm{F}_1}
\global\long\def\simple{\mathrm{simple}}
\global\long\def\even{\mathrm{even}}
\global\long\def\odd{\mathrm{odd}}
\global\long\def\st{\mathrm{ST}}
\global\long\def\usf{\mathrm{USF}}
\global\long\def\Leb{\mathrm{Leb}}
\global\long\def\LP{\mathrm{LP}}
\global\long\def\Poisson{\mathrm{P}}
\global\long\def\const{\mathrm{C}}

\global\long\def\LA{\mathcal{A}}
\global\long\def\LB{\mathcal{B}}
\global\long\def\LC{\mathcal{C}}
\global\long\def\LD{\mathcal{D}}
\global\long\def\LF{\mathcal{F}}
\global\long\def\LK{\mathcal{K}}
\global\long\def\LE{\mathcal{E}}
\global\long\def\LG{\mathcal{G}}
\global\long\def\LI{\mathcal{I}}
\global\long\def\LJ{\mathcal{J}}
\global\long\def\LL{\mathcal{L}}
\global\long\def\LM{\mathcal{M}}
\global\long\def\LN{\mathcal{N}}
\global\long\def\OO{\mathcal{O}}
\global\long\def\LQ{\mathcal{Q}}
\global\long\def\LR{\mathcal{R}}
\global\long\def\LT{\mathcal{T}}
\global\long\def\LS{\mathcal{S}}
\global\long\def\LU{\mathcal{U}}
\global\long\def\LV{\mathcal{V}}
\global\long\def\LW{\mathcal{W}}
\global\long\def\LX{\mathcal{X}}
\global\long\def\LY{\mathcal{Y}}
\global\long\def\PartF{\mathcal{Z}}
\global\long\def\LH{\mathcal{H}}
\global\long\def\LJ{\mathcal{J}}

\global\long\def\bm{\mathfrak{m}}
\global\long\def\blm{\mathfrak{m}^{\mathrm{loop}}}

\global\long\def\LZ{\mathcal{Z}}
\global\long\def\LZgff{\mathcal{Z}_{\mathrm{GFF}}}
\global\long\def\LZgffr{\mathcal{Z}_{\mathrm{GFF}}^{(\mathfrak{r})}}

\global\long\def\coulomb{\LH}
\global\long\def\auxcoulomb{\hat{\coulomb}}
\global\long\def\coulombGas{\LF}
\global\long\def\coulombnew{\LK}
\global\long\def\coulombLine{\LG}
\global\long\def\kfunc{p}

\global\long\def\eps{\epsilon}
\global\long\def\ov{\overline}
\global\long\def\QQrp{\QQ_{\alpha; \bs{s}}^{(p)}}

\global\long\def\bn{\mathbf{n}}
\global\long\def\MR{MR}
\global\long\def\cond{\,|\,}
\global\long\def\la{\langle}
\global\long\def\ra{\rangle}
\global\long\def\tree{\Upsilon}
\global\long\def\prob{\mathbb{P}}
\global\long\def\hm{\mathrm{Hm}}
%

\global\long\def\Im{\operatorname{Im}}
\global\long\def\Re{\operatorname{Re}}

\global\long\def\ud{\mathrm{d}}
\global\long\def\pder#1{\frac{\partial}{\partial#1}}
\global\long\def\pdder#1{\frac{\partial^{2}}{\partial#1^{2}}}
\global\long\def\pddder#1{\frac{\partial^{3}}{\partial#1^{3}}}
\global\long\def\der#1{\frac{\ud}{\ud#1}}

\global\long\def\bZnn{\mathbb{Z}_{\geq 0}}
\global\long\def\bZpos{\mathbb{Z}_{> 0}}
\global\long\def\bZneg{\mathbb{Z}_{< 0}}

\global\long\def\Vfunc{\LG}
\global\long\def\gfunc{g^{(\rr)}}
\global\long\def\hfunc{h^{(\rr)}}

\global\long\def\SimplexInt{\rho}
\global\long\def\CubeInt{\widetilde{\rho}}

\global\long\def\ii{\mathfrak{i}}
\global\long\def\ee{\mathrm{e}}
\global\long\def\rr{\mathfrak{r}}
\global\long\def\chamber{\mathfrak{X}}
\global\long\def\Wchamber{\mathfrak{W}}

\global\long\def\SimplexIntKappa8{\SimplexInt}

\global\long\def\nested{\boldsymbol{\underline{\Cap}}}
\global\long\def\unnested{\boldsymbol{\underline{\cap\cap}}}
\global\long\def\unnested{\boldsymbol{\underline{\cap\cap}}}

\global\long\def\acycle{\vartheta}
\global\long\def\bcycle{\tilde{\acycle}}

\global\long\def\metric{\mathrm{dist}}

\global\long\def\adj#1{\mathrm{adj}(#1)}

\global\long\def\bs{\boldsymbol}

\global\long\def\edge#1#2{\langle #1,#2 \rangle}
\global\long\def\graph{G}

\newcommand{\conn}{\varsigma}
\newcommand{\realacycle}{\smash{\mathring{\acycle}}}
\newcommand{\realpt}{\smash{\mathring{x}}}
\newcommand{\corrind}{\LC}
\newcommand{\bssymb}{\pi}
\newcommand{\PRCM}{\mu}
\newcommand{\coeff}{p}
\newcommand{\MainConst}{C}

\global\long\def\removeLink{/}

\global\long\def\domainofdef{\mathfrak{U}}
\global\long\def\Test_space{C_c^\infty}
\global\long\def\Distr_space{(\Test_space)^*}

\global\long\def\bs{\boldsymbol}
\global\long\def\cst{\mathrm{C}}

\newcommand{\red}{\textcolor{red}}
\newcommand{\blue}{\textcolor{blue}}
\newcommand{\green}{\textcolor{green}}
\newcommand{\magenta}{\textcolor{magenta}}

\newcommand{\coulombGasH}{\mathcal{H}}
\newcommand{\secondbeta}{\intloop}

\newcommand{\cev}[1]{\reflectbox{\ensuremath{\vec{\reflectbox{\ensuremath{#1}}}}}}

\global\long\def\anticonf{\zeta}
\global\long\def\intloop{\varrho}
\global\long\def\Gloop{\smash{\mathring{\intloop}}}

\global\long\def\SLEmeasure{\mathrm{P}}
\global\long\def\SLEmeasureEx{\mathrm{E}}

\global\long\def\fugacity{\nu}
\global\long\def\meanderMat{\mathcal{M}}
\global\long\def\LM{\mathcal{M}}
\global\long\def\meanderMatrix{\meanderMat_{\fugacity}}
\global\long\def\meanderMatrixPrime{\meanderMat_{\fugacity(\kappa')}}
\global\long\def\meanderRenorm{\widehat{\mathcal{M}}}

\global\long\def\PartFRenorm{\widehat{\PartF}}
\global\long\def\coulombGasRenorm{\widehat{\coulombGas}}

\global\long\def\hexa{\scalebox{1.3}{\hexagon}}

\global\long\def\np{p}

\global\long\def\FKdual{\mathcal{L}}

\global\long\def\fixedindex{\flat}

\global\long\def\covmap{\phi}

\global\long\def\mslitdriv{\omega}
\newcommand{\LZradial}{\mathcal{Z}_{\mathrm{rad}}}
\newcommand{\Pradial}{\mathsf{P}_{\mathrm{rad}}}

\maketitle
\vspace{-1cm}
\begin{center}
\begin{minipage}{0.95\textwidth}
\abstract{We analyze the level lines of the Gaussian Free Field with logarithmic singularities in polygonal domains. We focus on the (none-zero) singular case and compare it to the singular-free setting.
The law of the level lines for the field in singular case is absolutely continuous with respect to the law for the singular-free case, with a Radon–Nikodym derivative related to the reduced Green's energy which is introduced by F. Viklund and D. W. Nystr\"om~[VN24]. 
These level lines admit a Brownian motion representation. 
In the singular-free case, such representation involves Poisson point processes of Brownian loops and excursions; while for the singular case, it additionally includes Poisson point processes of Brownian bubbles and bridges. 
The special cases of one or two singularities yield moment-generating functions of the conformal radius, which can be interpreted as one-point and two-point correlation functions of vertex operators in conformal field theory at the free-boson point. 
While the Radon–Nikodym derivative is bounded away from the singularities, the behavior of the level lines near the singularities is markedly different from the singular-free case, with different asymptotic probabilities for all level lines to remain close to the singularity.}

\noindent\textbf{Keywords:} Gaussian free field, reduced Green's energy, Brownian motion representation\\ 
\noindent\textbf{MSC:} 60J67
\end{minipage}
\end{center}

\tableofcontents
\section{Introduction}
The Gaussian free field (GFF) is a fundamental random surface model in probability theory and mathematical physics. It is the high-dimensional analogue of Brownian motion, where the time parameter is replaced by a spatial domain~\cite{SheffieldGFFMath}. 
In the context of conformal field theory (CFT), it is the building block for constructing vertex operators. 
Although the GFF is a distribution and not a function, one can still make sense of its (zero) level lines which separate the positive and negative regions of the field~\cite{DubedatSLEFreeField, SchrammSheffieldDiscreteGFF, SchrammSheffieldContinuumGFF}. The properties of multiple level lines of GFF in polygons are widely investigated~\cite{WangWuLevellinesGFFI, AruSepulvedaWernerBTLStwoDGFF, PeltolaWuGlobalMultipleSLEs}. In this article, we focus on level lines of GFF with logarithmic singularities.

\paragraph*{GFF with logarithmic singularities.}
For $p\ge 1$, we say that $(\Omega; x_1, \ldots, x_p)$ is $p$-polygon if $\Omega\subsetneq\C$ is simply connected and $x_1, \ldots, x_p\in\partial\Omega$ are distinct points lying counterclockwise along the boundary. We assume that $\partial\Omega$ is locally connected and the marked boundary points $x_1, \ldots, x_p$ lie on sufficiently regular boundary segments, e.g. $C^{1+\eps}$ for some $\eps>0$. We denote by $(x_1x_2)$ the boundary arc of $\partial\Omega$ from $x_1$ to $x_2$ in counterclockwise order. 

Fix $N\ge 1$ and $2N$-polygon $(\Omega; \bs{x})$ with $\bs{x}=(x_1, \ldots, x_{2N})$. 
Suppose $\Gamma_0(\cdot)=\Gamma_0(\Omega; \bs{x}; \cdot)$ is the bounded harmonic function in $\Omega$ with the boundary data\footnote{with the convention that $x_{2N+1}=x_1$.} 
\begin{equation}\label{eqn::multiplelog_polygon_boundarydata}
\pi \text{ on }\partial^+\Omega=\cup_{j=1}^N(x_{2j-1}x_{2j}), \qquad 0 \text{ on }\partial^-\Omega=\cup_{j=1}^N(x_{2j}x_{2j+1}).
\end{equation}
Suppose $\Gamma$ is GFF in $(\Omega; \bs{x})$ with alternating boundary data~\eqref{eqn::multiplelog_polygon_boundarydata}. 
Fix $M\ge 1$ and $\bs{\gamma}=(\gamma_1, \ldots, \gamma_M)\in\R^M$ and $\bs{u}=(u_1, \ldots, u_M)$ where $u_1, \ldots, u_M\in \Omega$ are distinct points. 
Consider the level lines of the field
\begin{equation}\label{eqn::multiplelog_GFF_polygon}
\Gamma+\sum_{\ell=1}^M \gamma_{\ell}\Green(\Omega; u_{\ell}, \cdot)
\end{equation}
where $\Green(\Omega; \cdot, \cdot)$ is Green's function on $\Omega$ (see~\eqref{eqn::Green_H}-\eqref{eqn::Green_inv}). 
We will consider the collection of level lines of the field~\eqref{eqn::multiplelog_GFF_polygon}. 
The collection of level lines of the field when $\bs{\gamma}=0$ was studied in~\cite{PeltolaWuGlobalMultipleSLEs}. 
In this article, we focus on the law of level lines with non-zero $\bs{\gamma}$ and compare them to the case when $\bs{\gamma}=0$. 
Let us summarize our conclusions.
\begin{itemize}
\item The law of the collection of the level lines of the field~\eqref{eqn::multiplelog_GFF_polygon} is absolutely continuous with respect to the field $\Gamma$ and the Radon-Nikodym derivative is related to the reduced Green's energy introduced by F. Viklund and D. W. Nystr\"om~\cite{ViklundNystrom}, see Theorem~\ref{thm::multiplelog_polygon}.
\item The level lines of the field~\eqref{eqn::multiplelog_GFF_polygon} have a Brownian motion representation, see Theorem~\ref{thm::Brownian_represent}.
When $\bs{\gamma}=0$, such representation involves Poisson point processes of Brownian loops and Brownian excursions. For non-zero $\bs{\gamma}$, such representation also includes Poisson point processes of Brownian bubbles and Brownian bridges. 
\item  The special cases $M=1$ and $M=2$ give moment-generating functions of the conformal radius, see Propositions~\ref{prop::onelog_CR_fourier} and~\ref{prop::twopoint_correlation}. These generating functions can be viewed as one-point and two-point correlation functions of vertex operators in CFT at the free-boson point $\kappa=4$.
\item From the first conclusion, the law of the level lines of the field~\eqref{eqn::multiplelog_GFF_polygon} are absolutely continuous with respect to the case when $\bs{\gamma}=0$. 
In particular, when bounded away from the singularity, the Radon-Nikodym derivative is bounded. However, the behavior of the level lines when they are close to the singularity can be very different from the case when $\bs{\gamma}=0$. The probability for all level lines close to the sigularity  exhibits different asymptotic with respect to the case when $\bs{\gamma}=0$, see Proposition~\ref{prop::asymp}. 
\end{itemize}

\subsection{Reduced Green's energy}

\paragraph*{Hele-Shaw flow and reduced Green's energy.}
The Hele-Shaw flow is a mathematical model of the propagation of fluid in a cell, describing the evolution of the fluid interface. 
We refer reader to the book~\cite{GustafssonVasilev} for the classical Hele-Shaw flow and various references. The competitive Hele-Shaw flow was introduced in~\cite{ViklundNystrom}. 
This is a variant of the Hele-Shaw flow where several droplets compete for space while preserving their topology. 
To describe the stationary solution for the competitive Hele-Shaw flow problem, F. Viklund and D. W. Nystr\"om introduced reduced Green's energy in~\cite{ViklundNystrom}: for a domain $D$ such that each component is simply connected, and for $\bs{u}=(u_1, \ldots, u_M)$ such that $u_1, \ldots, u_M\in D$ are distinct points, reduced Green's energy is defined as
\begin{align}\label{eqn::reducedGreenenergy_def}
\LE_{\bs{\gamma}}(D; \bs{u})=\sum_{1\le i<k \le M}\gamma_i\gamma_k\Green(D; u_i, u_k)+\sum_{j=1}^M\frac{\gamma_j^2}{2}\log\CR(D; u_j)
\end{align}
where $\Green(D; u_i, u_k)$ is the Green's function of the component of $D$ containing $u_i$ and $u_k$\footnote{If $u_i, u_k$ belong to distinct components of $D$, we set $\Green(D; u_i, u_k)=0$.}, 
and $\CR(D; u_j)$ is conformal radius of the component of $D$ containing $u_j$ (see~\eqref{eqn::CR_H}-\eqref{eqn::CR_cov}). 

\begin{theorem}\label{thm::multiplelog_polygon}
Fix $N\ge 1$ and $2N$-polygon $(\Omega; \bs{x})$ with $\bs{x}=(x_1, \ldots, x_{2N})$. 
Suppose $\Gamma$ is GFF in $(\Omega; \bs{x})$ with alternating boundary data~\eqref{eqn::multiplelog_polygon_boundarydata}. 
We consider the field~\eqref{eqn::multiplelog_GFF_polygon}. 
Let $\eta^j$ be the level line starting from $x_{2j-1}$ for $1\le j\le N$ and denote $\bs{\eta}=(\eta^1, \ldots, \eta^N)$ and we also write $\bs{\eta}=\cup_{j=1}^N\eta^j$. 
We denote by $\Omega_{\bs{\eta}}^+$ the union of the connected components of $\Omega\setminus\bs{\eta}$ having $\partial^+\Omega$ on the boundary 
and denote by $\Omega_{\bs{\eta}}^-$ the union of the connected components of $\Omega\setminus\bs{\eta}$ having $\partial^-\Omega$ on the boundary. 
Denote by $\PP_{\bs{\gamma}}$ the law of $\bs{\eta}$. 
Then $\PP_{\bs{\gamma}}$ is absolutely continuous with respect to $\PP=\PP_{\bs{0}}$ and the Radon-Nikodym derivative is given by 
\begin{align}\label{eqn::multiplelog_polygon_RN}
\frac{\exp\left(\LE_{\bs{\gamma}}(\Omega\setminus\bs{\eta}; \bs{u})\right)}{\exp\left(\LE_{\bs{\gamma}}(\Omega; \bs{u})\right)}\prod_{\ell=1}^M\frac{\left(\one\{u_{\ell}\in\Omega_{\bs{\eta}}^+\}\exp\left(\pi\gamma_{\ell}\right)+\one\{u_{\ell}\in\Omega_{\bs{\eta}}^-\})\right)}{\exp(\gamma_{\ell}\Gamma_0(u_{\ell}))}
\end{align}
where $\LE_{\bs{\gamma}}$ is reduced Green's energy~\eqref{eqn::reducedGreenenergy_def}. 
\end{theorem}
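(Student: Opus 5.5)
The plan is a Girsanov/Cameron–Martin argument on the field, followed by the domain Markov property of the level lines. Set $h=\sum_{\ell}\gamma_\ell\Green(\Omega;u_\ell,\cdot)$. It is not in the Cameron–Martin space because of its logarithmic singularities, so I will first mollify. Replace $\Green(\Omega;u_\ell,\cdot)$ by $h_\ell^\eps$, its average against the uniform measure on the circle $\partial B(u_\ell,\eps)$; equivalently $h^\eps_\ell = -\Delta^{-1}$ applied to that uniform measure. Put $h^\eps=\sum_\ell\gamma_\ell h^\eps_\ell$. This function is in $H^1_0(\Omega)$, so the law of $\Gamma+h^\eps$ is absolutely continuous with respect to that of $\Gamma$. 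The density is
\[
\exp\Big(\sum_\ell \gamma_\ell (\Gamma-\Gamma_0)_\eps(u_\ell)-\tfrac12\sum_{i,k}\gamma_i\gamma_k\,\mathrm{Cov}_\eps(u_i,u_k)\Big),
\]
where $(\cdot)_\eps$ denotes the circle average.

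\textbf{Step 1 (level lines of the shifted field).} For $\eps$ small, the level lines of $\Gamma+h^\eps$ coincide a.s. with those of the field~\eqref{eqn::multiplelog_GFF_polygon} on the event $E_\eps$ that $\bs\eta$ avoids the balls $B(u_\ell,\eps)$. The reason is that $h^\eps=h$ outside these balls, and level lines are local sets determined by the field near them. Conditioning on $\bs\eta$ and using the coupling of~\cite{PeltolaWuGlobalMultipleSLEs} (where $\bs\eta$ is a local set with harmonic function $\pi$ on $\Omega^+_{\bs\eta}$ and $0$ on $\Omega^-_{\bs\eta}$), I will compute the conditional expectation of the Girsanov density given $\bs\eta$. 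Given $\bs\eta$, the field $\Gamma$ restricted to $\Omega\setminus\bs\eta$ is a GFF there plus that harmonic function. Hence $(\Gamma-\Gamma_0)_\eps(u_\ell)$ is Gaussian, with mean $\pi\one\{u_\ell\in\Omega^+_{\bs\eta}\}-\Gamma_0(u_\ell)$ up to $o(1)$, and covariance equal to the circle-average Green's function of $\Omega\setminus\bs\eta$.

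\textbf{Step 2 (conditional expectation and the energy).} Taking the exponential moment yields
\[
\exp\Big(\tfrac12\sum_{i,k}\gamma_i\gamma_k\big(\Green_\eps(\Omega\setminus\bs\eta;u_i,u_k)-\Green_\eps(\Omega;u_i,u_k)\big)\Big)\prod_\ell \frac{e^{\pi\gamma_\ell\one\{u_\ell\in\Omega^+\}}}{e^{\gamma_\ell\Gamma_0(u_\ell)}}.
\]
The off-diagonal terms converge to the Green's function differences, and they vanish on the new side when $u_i$ and $u_k$ lie in distinct components. For the diagonal terms, note that $\Green_\eps(D;u,u)=-\log\eps+\log\CR(D;u)+o(1)$ with the paper's normalization. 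The $\log\eps$ parts therefore cancel, leaving $\frac{\gamma^2}{2}\big(\log\CR(\Omega\setminus\bs\eta;u)-\log\CR(\Omega;u)\big)$. This is exactly the ratio of $\exp\LE_{\bs\gamma}$ terms in~\eqref{eqn::multiplelog_polygon_RN}. Thus for every event $A$ measurable with respect to $\bs\eta$, we get $\PP_{\bs\gamma}[A\cap E_\eps]=\E[R\,\one_{A\cap E_\eps}]$, where $R$ is the expression in~\eqref{eqn::multiplelog_polygon_RN}. This identity holds up to the $o(1)$ error, which actually disappears once we use exact circle averages. Indeed, on $E_\eps$ the circle average of the harmonic extension equals its value at the centre, and the covariance of circle averages is an exact expression. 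I will arrange the computation so that these equalities are exact rather than asymptotic.

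\textbf{Step 3 (removing $\eps$).} Let $\eps\downarrow0$. The level lines a.s. do not hit the interior points $u_\ell$, under $\PP$ and under $\PP_{\bs\gamma}$, so $E_\eps\uparrow$ a full-measure event. Monotone convergence then gives $\PP_{\bs\gamma}[A]=\E[R\one_A]$. In particular $\E[R]=1$, and absolute continuity follows.

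I expect the main obstacle to be Step 1: justifying that the level lines of the singular field are well defined and coincide with those of the mollified field on $E_\eps$. This requires the local-set uniqueness theory for level lines with a non-smooth but locally bounded perturbation away from $u_\ell$. It also requires showing that under $\PP_{\bs\gamma}$ the lines a.s. avoid the singularities, which cannot be taken for granted when $|\gamma_\ell|$ is large. My first attempt would be to define the level lines of~\eqref{eqn::multiplelog_GFF_polygon} as the limit of the stopped lines, consistently in $\eps$, and to deduce avoidance from the finiteness of $\E[R]=1$.
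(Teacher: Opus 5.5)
Your main line is correct, and it takes a genuinely different route from the paper.

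\textbf{How the paper argues.} The paper proceeds one curve at a time. Lemma~\ref{lem::multiple_log_mart} is an It\^o computation showing that $M_t(\LZgff^{(\bs{\gamma})})/M_t(\LZgff)$ is a local martingale for the $\bs{\gamma}=0$ level line. Lemma~\ref{lem::GFFlog_levelline_coupling} identifies the tilted $\SLE_4$ with the level line of the singular field through the Miller--Sheffield coupling criterion. Lemma~\ref{lem::cascade_singlecurve} rewrites the ratio as $\exp(\LE_{\bs{\gamma}}(\HH\setminus\eta_{[0,t]};\bs{u}))\prod_\ell \exp(\gamma_\ell\Gamma_0(\HH\setminus\eta_{[0,t]};\bs{x};u_\ell))$. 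This quantity is bounded, so its terminal value is the density for the first curve. The full statement then follows by induction on $N$ via the domain Markov property.

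\textbf{How you argue, and what each route buys.} You perform a single Cameron--Martin tilt of the field, condition on all $N$ lines at once using the local-set property of the $\bs{\gamma}=0$ lines, and evaluate a Gaussian exponential moment. Your exact identities on $E_\eps$ are right: the mean value property, and $\Green_\eps(D;u,u)=-\log\eps+\log\CR(D;u)$ for the double circle average. The reduced Green's energy then appears transparently as the renormalized gap between conditional and unconditional variances. Your route needs no Loewner calculus and no cascade, and it would apply verbatim to other thin local sets with known harmonic function. The paper's route defines the singular level lines intrinsically and produces the driving SDE~\eqref{eqn::multiple_log_SDE}, which is reused in Section~\ref{sec::asymp}. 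It also gets absolute continuity, hence avoidance of the $u_\ell$, directly from the boundedness of the martingale. The locality statement you need in Step~1 is of the same type the paper itself invokes in the proof of Lemma~\ref{lem::conv_envelop}.

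\textbf{The gap: Step~3 is circular as written.} You use avoidance of the $u_\ell$ under $\PP_{\bs{\gamma}}$ to get $\E[R]=1$, and then propose to deduce avoidance from $\E[R]=1$. Step~2 plus monotone convergence only gives $\PP_{\bs{\gamma}}[A\cap\{u_\ell\notin\bs{\eta}\ \forall \ell\}]=\E[R\one_A]$. Since $D_\eps$ is not uniformly integrable, nothing so far controls $\PP_{\bs{\gamma}}[E_\eps^c]$; a crude H\"older bound on $\E[D_\eps\one_{E_\eps^c}]$ fails for large $|\gamma_\ell|$.

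\textbf{How to close it within your framework.}
\begin{enumerate}
\item Compute $R_\eps:=\E[D_\eps\mid\bs{\eta}]$ on the whole space, not only on $E_\eps$.
\item The $\bs{\gamma}=0$ lines are thin local sets whose harmonic function takes values in $[0,\pi]$. Also, for every closed $K$, the kernel $\Green(\Omega;\cdot,\cdot)-\Green(\Omega\setminus K;\cdot,\cdot)$ is positive semidefinite. Together these give the deterministic bound $0\le R_\eps\le \exp(\pi\sum_\ell|\gamma_\ell|)$.
\item Use locality once more: the singular field's lines meet $\cup_\ell\overline{B(u_\ell,\eps)}$ if and only if those of $\Gamma+h^\eps$ do. Hence $\PP_{\bs{\gamma}}[E_\eps^c]=\E[D_\eps\one_{E_\eps^c}]=\E[R_\eps\one_{E_\eps^c}]\le \exp(\pi\sum_\ell|\gamma_\ell|)\,\PP[E_\eps^c]\to 0$.
\end{enumerate}
This yields avoidance under $\PP_{\bs{\gamma}}$ and $\E[R]=1$ without circularity.
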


We will prove Theorem~\ref{thm::multiplelog_polygon} in Section~\ref{sec::GFF_levellines}. The key in the proof is a martingale observable in Lemma~\ref{lem::multiple_log_mart}. 

\subsection{Brownian motion representation}
\label{sec::intro_Brownian_represnt}

The Brownian motion representation of GFF level lines is rooted in the deep connection between the GFF and Brownian paths, formalized through isomorphism theorems~\cite{LeJanMarkovPathsLoopsFields, LupuFreeField}. 
In this framework, level lines of the GFF can be constructed by sampling a
Poisson point process of Brownian loops and a Poisson point process of Brownian excursions, and then taking the envelope of these paths~\cite{AruLupuSepulvedaFPSGFFCVGISO, LupuWuLevellineGFF}. For the GFF with logarithmic singularities in Theorem~\ref{thm::multiplelog_polygon}, the previous Brownian motion representation can be generalized where one needs to add Poisson point processes of Brownian bubbles and Brownian bridges. 

\begin{figure}[ht!]
\begin{subfigure}[t]{0.18\textwidth}
\begin{center}
\includegraphics[width=\textwidth]{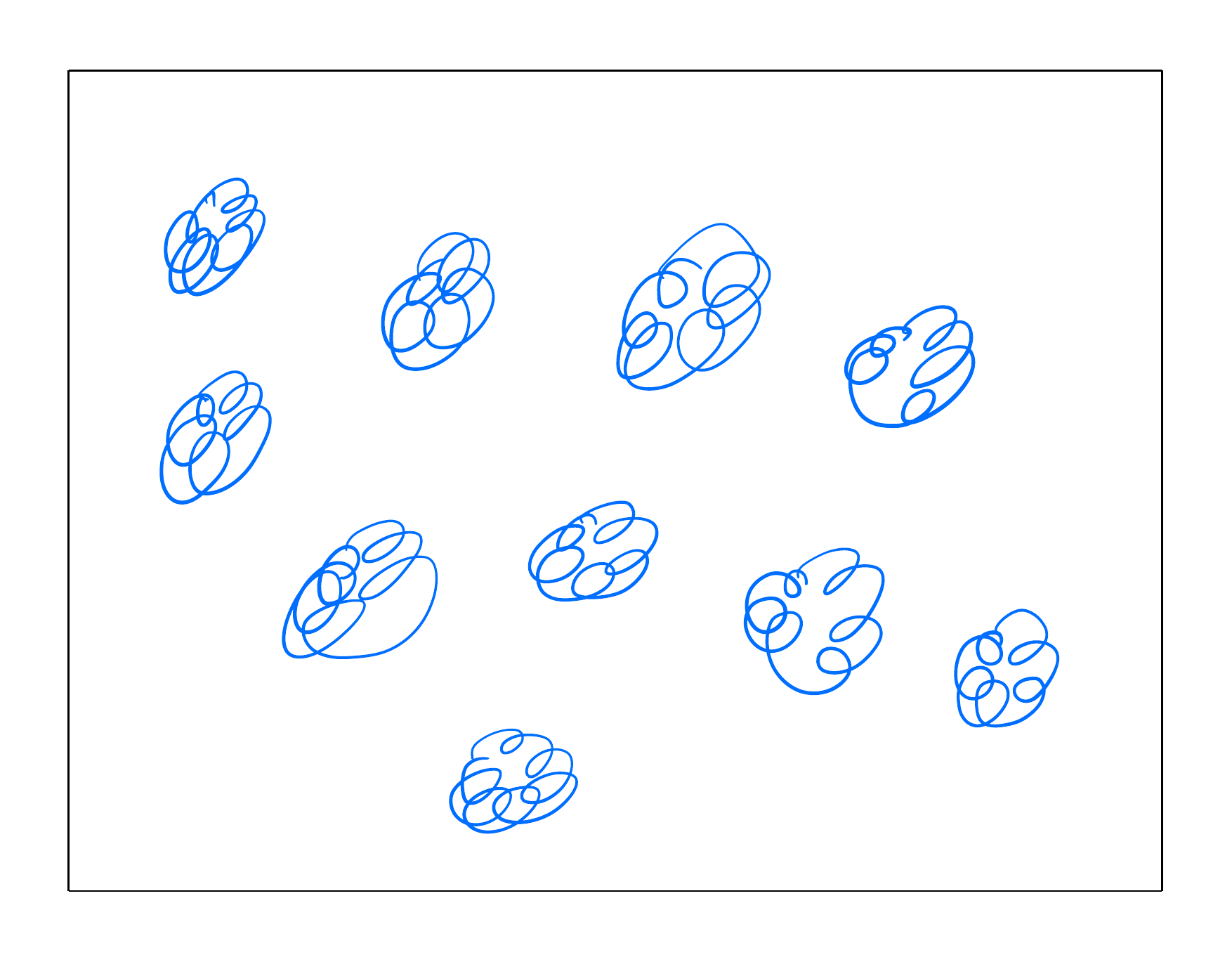}
\end{center}
\caption{}
\end{subfigure}
\begin{subfigure}[t]{0.18\textwidth}
\begin{center}
\includegraphics[width=\textwidth]{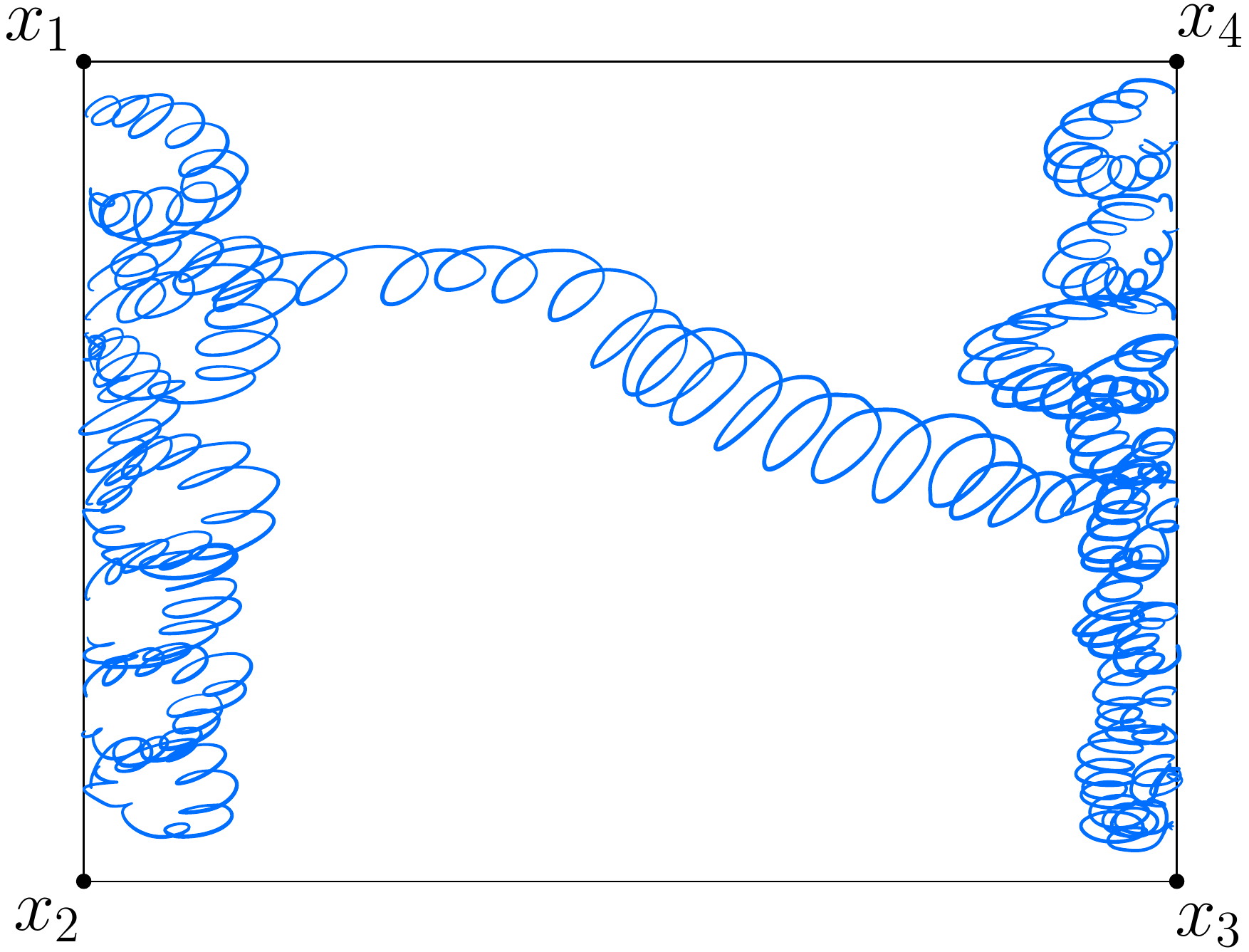}
\end{center}
\caption{}
\end{subfigure}
\begin{subfigure}[t]{0.18\textwidth}
\begin{center}
\includegraphics[width=\textwidth]{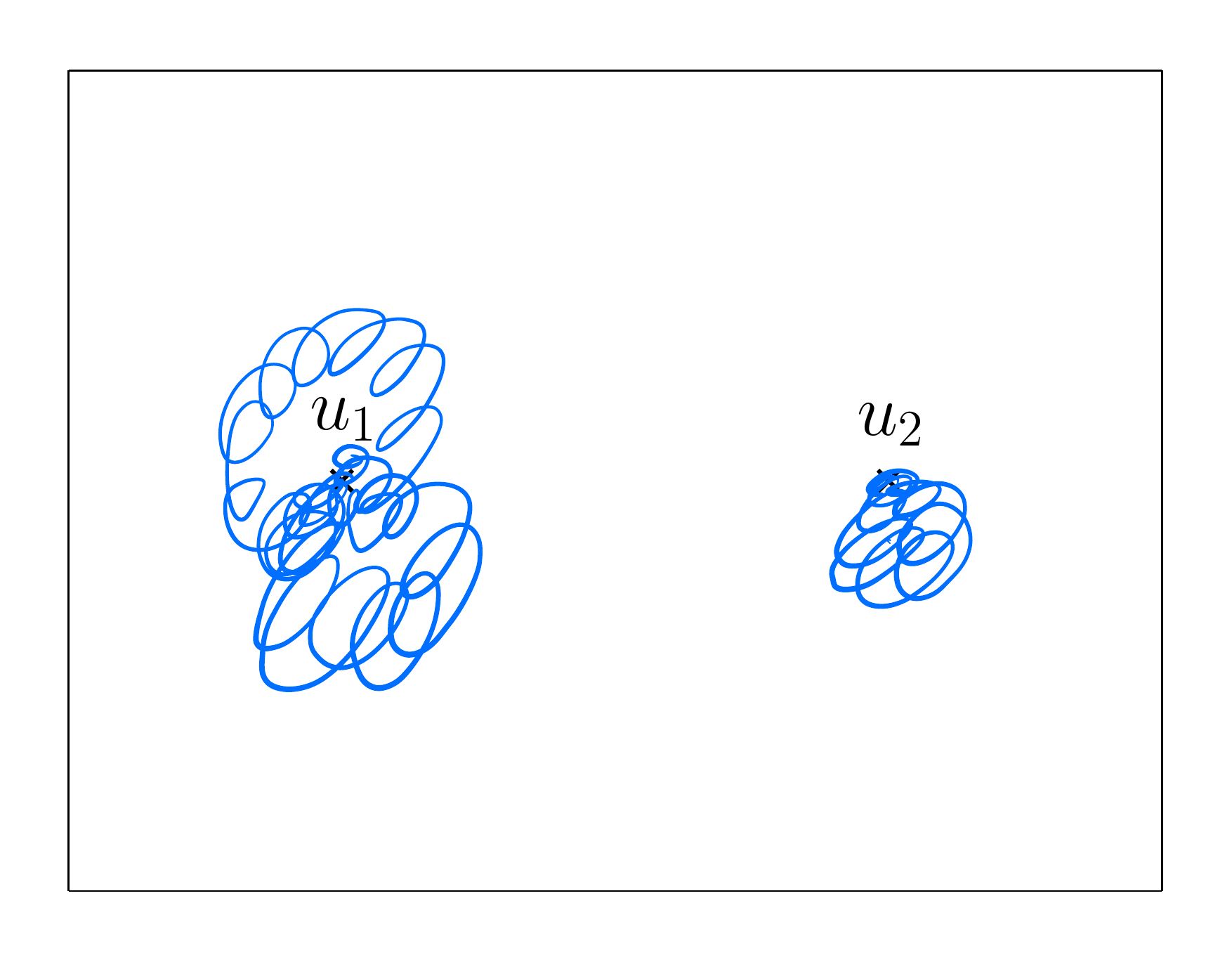}
\end{center}
\caption{}
\end{subfigure}
\begin{subfigure}[t]{0.18\textwidth}
\begin{center}
\includegraphics[width=\textwidth]{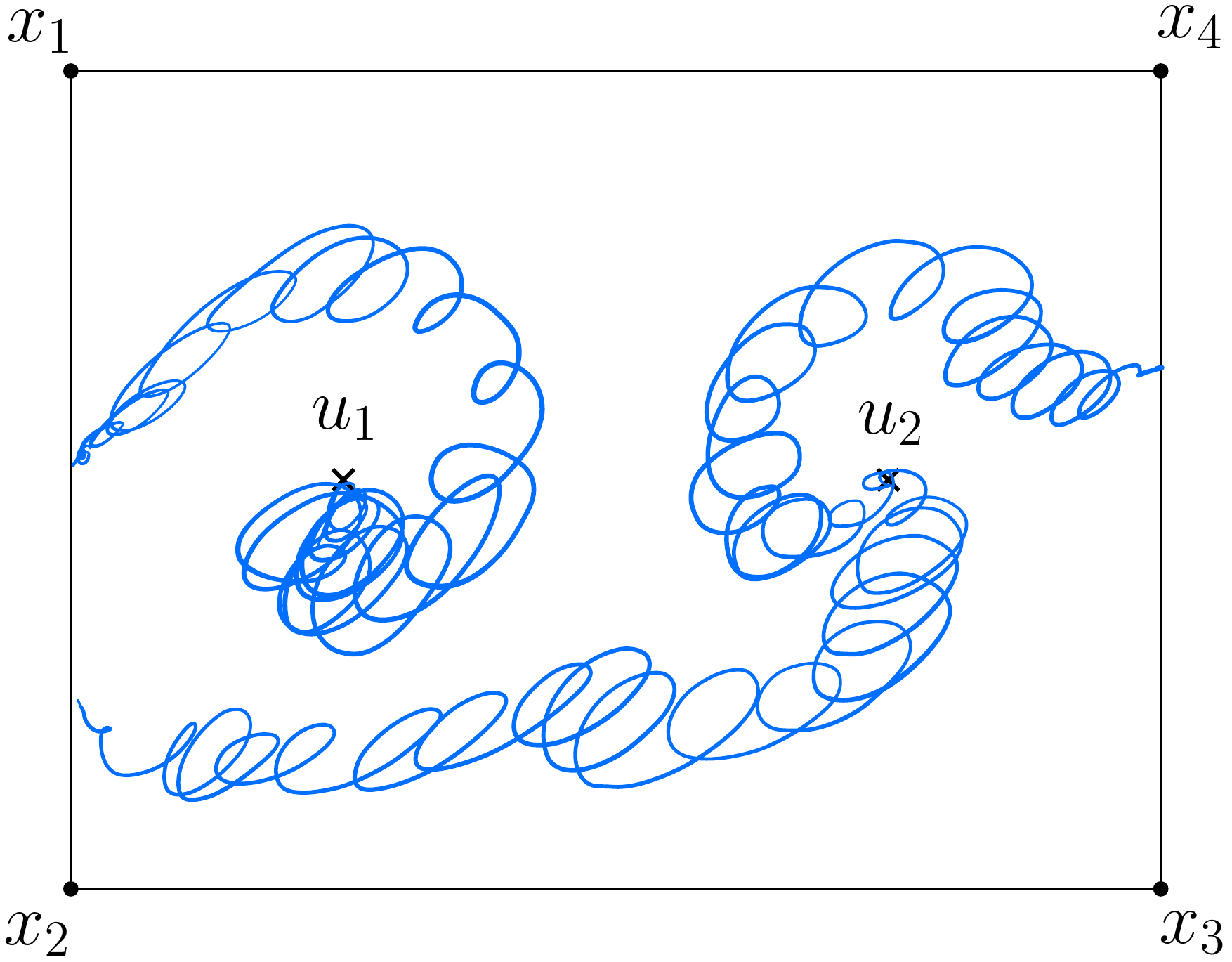}
\end{center}
\caption{}
\end{subfigure}
\begin{subfigure}[t]{0.18\textwidth}
\begin{center}
\includegraphics[width=\textwidth]{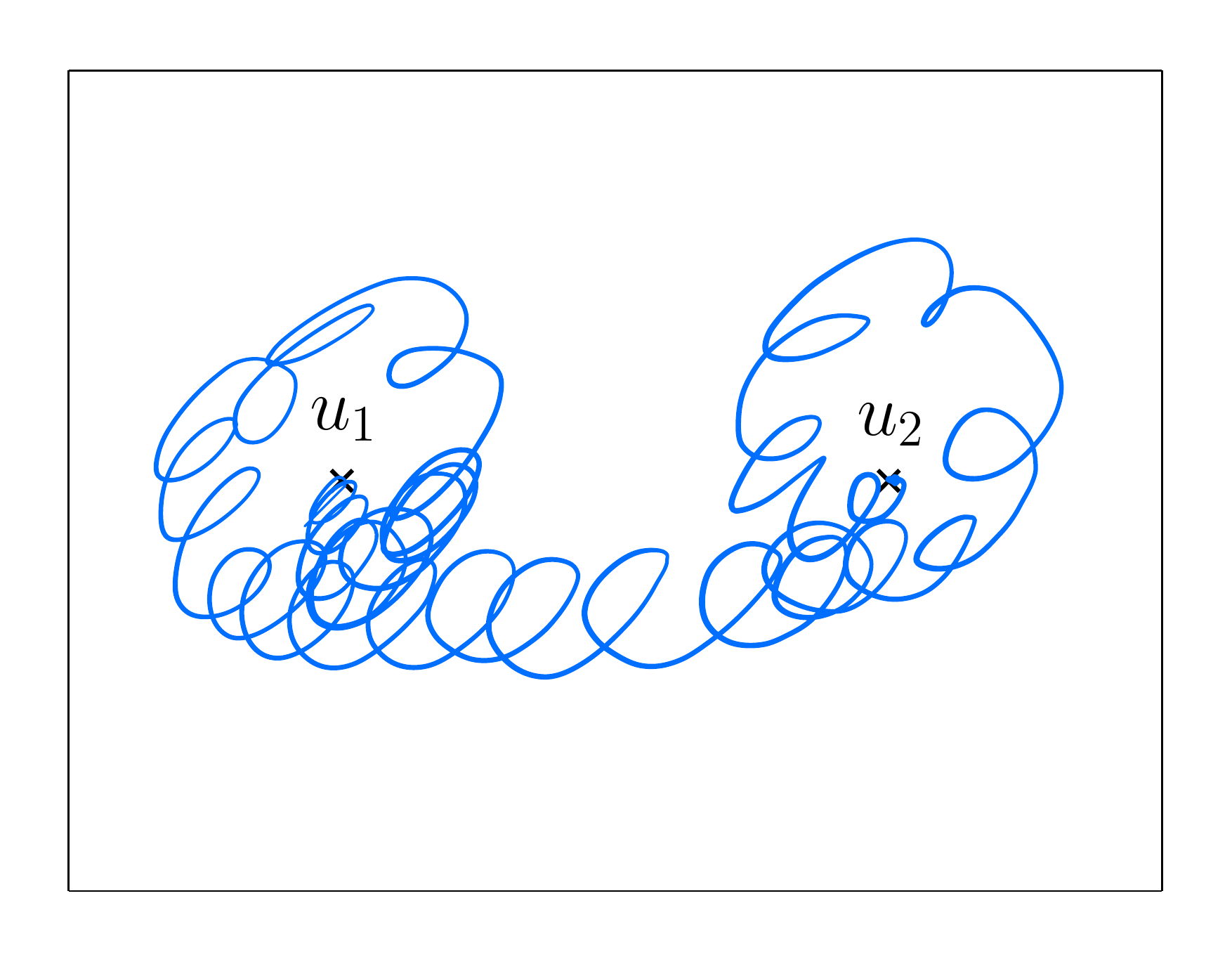}
\end{center}
\caption{}
\end{subfigure}
\caption{\label{fig::PPP} 
Illustration for Poisson point processes in Theorem~\ref{thm::Brownian_represent} for $N=2$ and $M=2$: there are four marked points $x_1, x_2, x_3, x_4$ on the boundary and there are two marked points $u_1, u_2$ in the interior. 
Figure~(a) indicates the loop soup in~(1). 
Figure~(b) indicates the excursion soup in~(2). 
Figure~(c) indicates the bubble soup in~(3). 
Figure~(d) indicates the bridge soup in~(4). 
Figure~(e) indicates the bridge soup in~(5).  
}
\end{figure}

\begin{theorem}\label{thm::Brownian_represent}
Fix $N\ge 1$ and $2N$-polygon $(\Omega; \bs{x})$ with $\bs{x}=(x_1, \ldots, x_{2N})$ and assume the same notation as in~\eqref{eqn::multiplelog_polygon_boundarydata}. 
Fix $M\ge 1$ and $\bs{\gamma}=(\gamma_1, \ldots, \gamma_M)\in\R^M$ and $\bs{u}=(u_1, \ldots, u_M)$ where $u_1, \ldots, u_M\in \Omega$ are distinct points. 
We assume that $\partial\Omega$ is piecewise analytic and that $\gamma_j\ge 0$ for $1\le j\le M$. 
We sample $2+\frac{1}{2}M(M+3)$ independent Poisson point processes, see Figure~\ref{fig::PPP}.
\begin{enumerate}
\item[(1)] Poisson point process of Brownian loops inside $\Omega$ with intensity
$1/2$.
\item[(2)] Poisson point process of Brownian excursions with intensity
\begin{align}
\frac{\pi}{4}\iint_{\partial^+\Omega\times \partial^+\Omega}\ud x\ud y \;\Poisson(\Omega; x, y)\bm^{\#}(\Omega; x, y)[\cdot],
\end{align}
where $\Poisson(\Omega; x, y)$ is boundary Poisson kernel (see~\eqref{eqn::bPoisson_H}-\eqref{eqn::bPoisson_cov}) and $\bm^{\#}(\Omega; x, y)$ is probability measure on Brownian paths in $\Omega$ connecting $x$ and $y$ (see~\eqref{eqn::BM_xy}). 
\item[(3)] For each $u_{\ell}$, Poisson point process of Brownian bubbles with intensity
\begin{align}
\frac{\gamma_{\ell}^2}{4\pi}\bm(\Omega; u_{\ell}, u_{\ell}), 
\end{align}
where $\bm(\Omega; u_{\ell}, u_{\ell})$ is measure on Brownian loops in $\Omega$ that start and end at $u_{\ell}$ (see Section~\ref{subsec::Brownianpath}) which has infinite total mass. 
\item[(4)] For each $u_{\ell}$, Poisson point process of Brownian bridges with intensity
\begin{align}
\frac{\gamma_{\ell}}{2}\int_{\partial^+\Omega}\ud y \;\Poisson(\Omega; u_{\ell}, y)\bm^{\#}(\Omega; u_{\ell}, y)[\cdot], 
\end{align}
where $\Poisson$ is Poisson kernel (see~\eqref{eqn::Poisson_H}-\eqref{eqn::Poisson_cov}) and $\bm^{\#}(\Omega; u_{\ell}, y)$ is probability measure on Brownian paths in $\Omega$ from $u_{\ell}$ to $y$ (see~\eqref{eqn::BM_zy}). 
\item[(5)] For each pair $u_i\neq u_j$, Poisson point process of Brownian bridges with intensity
\begin{align}
\frac{\gamma_i\gamma_j}{2}\frac{\Green(\Omega; u_i, u_j)}{\pi}\bm^{\#}(\Omega; u_i, u_j)[\cdot],
\end{align}
where $\Green(\Omega; u_i, u_j)$ is Green's function (see~\eqref{eqn::Green_H}-\eqref{eqn::Green_inv}) and $\bm^{\#}(\Omega; u_i, u_j)$ is probability measure on Brownian paths in $\Omega$ from $u_i$ to $u_j$ (see~\eqref{eqn::BM_zw}). 
\end{enumerate}
We consider the clusters formed by the collection of all the above Brownian trajectories,
and among these clusters, specifically the clusters connected to the boundary $\partial\Omega$,
which are exactly the clusters containing at least one excursion from (2) or (4).
Let $\bs{\eta}$ be the envelop of these clusters connected to $\partial\Omega$, which consists of
$N$ disjoint curves in $\Omega$ forming a pairing of $\{x_1,\dots ,x_{2N}\}$, see Figure~\ref{fig::envelop}. 
Then $\bs{\eta}$ has the same law as $\PP_{\bs{\gamma}}$ in Theorem~\ref{thm::multiplelog_polygon}. 
\end{theorem}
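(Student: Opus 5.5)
The plan is to reduce Theorem~\ref{thm::Brownian_represent} to the singular-free case $\bs{\gamma}=\bs{0}$ combined with Theorem~\ref{thm::multiplelog_polygon}. For $\bs{\gamma}=\bs{0}$ only the soups (1) and (2) appear, and the envelope of the boundary-touching clusters is known to have law $\PP$ \cite{AruLupuSepulvedaFPSGFFCVGISO, LupuWuLevellineGFF}; we take this as input. We then add the independent soups (3), (4), (5) and compute the law of the new envelope $\bs{\eta}$ as a reweighting of $\PP$. The goal is to show that this weight equals~\eqref{eqn::multiplelog_polygon_RN}.

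The key observation concerns the effect of adding the extra trajectories on a fixed configuration $\bs{\eta}'$ produced by (1)+(2). Every trajectory from (3), (4), (5) passes through some $u_\ell$. Consider a trajectory of type (4) emanating from $u_\ell$. If $u_\ell\in\Omega^+_{\bs{\eta}'}$ and the bridge stays in the component of $\Omega\setminus\bs{\eta}'$ that contains $u_\ell$, it hits $\partial^+\Omega$ and leaves the envelope unchanged. Otherwise it crosses $\bs{\eta}'$ and changes the envelope. The event that the envelope is unchanged is therefore the event that no extra trajectory leaves the component $D_\ell$ of $\Omega\setminus\bs{\eta}'$ containing its endpoints; bridges of type (5) joining different components must also be excluded.

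By the Poisson property, the probability of this event, given $\bs{\eta}'$, is the exponential of minus the mass of the excluded paths. Using the restriction formulas for the Brownian path measures, this mass can be computed term by term. For the bubble measure, $\bm(\Omega;u,u)-\bm(D;u,u)$ has mass $2\pi\log(\CR(\Omega;u)/\CR(D;u))$, which gives the term $\tfrac{\gamma_\ell^2}{2}\log\CR$. For the bridges (5), the mass is $\gamma_i\gamma_j(\Green(\Omega)-\Green(D))$ up to normalization. For the bridges (4), the mass of $u_\ell$-to-$\partial^+\Omega$ paths exiting $D_\ell$ is $\gamma_\ell$ times $\Gamma_0(u_\ell)/\pi\cdot\pi$ minus the harmonic measure of $\partial^+\Omega\cap\partial D_\ell$ from $u_\ell$. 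Since $\bs{\eta}'$ has boundary value $\pi$ on the $+$ side and $0$ on the $-$ side, and $\Gamma_0$ restricted to $D_\ell$ is the harmonic extension, this yields $\gamma_\ell(\Gamma_0(u_\ell)-\pi\one\{u_\ell\in\Omega^+\})$.

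The exponential of minus the total mass is then exactly~\eqref{eqn::multiplelog_polygon_RN} evaluated at $\bs{\eta}'$. Constants such as $\tfrac{\gamma^2}{4\pi}\cdot 2\pi=\tfrac{\gamma^2}{2}$ must be checked against the chosen normalizations of $\bm$, and the positivity assumption $\gamma_j\ge0$ is needed so that the intensities are nonnegative.

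The main obstacle is that the extra paths do change the envelope with positive probability, so the law of $\bs{\eta}$ is not simply $\PP$ conditioned on the "no-crossing" event. The plan to handle this is a resampling/Markov argument in the spirit of \cite{LupuWuLevellineGFF}. We show that the law $\QQ$ of the envelope is characterized by a domain Markov property together with the conditional law given partial exploration. Concretely, we explore one curve $\eta^1$ from $x_1$ as the boundary of the clusters. By the restriction property of the Poisson soups, conditionally on an initial segment the remaining soups restricted to the complement form the same construction in the slit domain, with the segment added to $\partial^\pm$, and the bridge intensities transform consistently. This identifies the driving process through the martingale observable of Lemma~\ref{lem::multiple_log_mart}, which also characterizes $\PP_{\bs{\gamma}}$. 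The delicate points are: justifying that the clusters of bubbles at $u_\ell$ (infinite mass, accumulating at $u_\ell$) do not create boundary-touching clusters or spurious envelope pieces, which holds because the bubbles stay in a small neighborhood with high probability at small scales and because the sub-critical intensity $1/2$ loop soup keeps clusters away from the boundary; and the uniqueness of the characterization, which we would obtain by comparing with the level-line coupling uniqueness for the field~\eqref{eqn::multiplelog_GFF_polygon}.
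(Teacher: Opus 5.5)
\textbf{There is a genuine gap.} Neither of your two steps produces the theorem, and the idea the paper relies on is missing.

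\textbf{Step 1: the reweighting identity is false.} A conditional probability that the extra trajectories leave $\bs\eta'$ unchanged is at most $1$. The density \eqref{eqn::multiplelog_polygon_RN} has $\PP$-mean $1$ and is not a.s.\ equal to $1$, since $\PP_{\bs\gamma}\neq\PP$. So it must exceed $1$ with positive probability, and the two cannot be equal. Concretely, on $\{u_\ell\in\Omega^+_{\bs\eta}\}$ with $\gamma_\ell>0$ the factor $\exp(\gamma_\ell(\pi-\Gamma_0(u_\ell)))$ is larger than $1$. Equivalently, your ``excluded mass'' $\gamma_\ell(\Gamma_0(u_\ell)-\pi)$ is negative.

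The slip is in the type (4) count. A bridge from $u_\ell$ that stays in $D_\ell$ must end on $\partial^+\Omega\cap\partial D_\ell$. The harmonic measure of that set from $u_\ell$ in $D_\ell$ is strictly less than $1$, because $\partial D_\ell$ also contains arcs of $\bs\eta'$. The value $\Gamma_0(\Omega\setminus\bs\eta';u_\ell)=\pi$ comes from the boundary data on $\bs\eta'$, not from a harmonic measure of $\partial^+\Omega$.

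More basically, the extra trajectories push the envelope towards $\partial^-\Omega$. The new law is therefore a push-forward, and no survival-probability computation on $\bs\eta'$ can identify it.

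\textbf{Step 2: the Markov argument is a sketch whose central claim is not available.} An initial arc of the envelope is not a stopping-time-type exploration of the Poisson soups. Whether an arc lies on the outer boundary of the boundary-touching clusters depends on trajectories far away. The conditional law of the soups given such an arc is not ``the same construction in the slit domain''. In particular, you would have to extract from the conditioning a fresh excursion soup from the $+$ side of the arc, with the intensity corresponding to boundary value $\pi$; this is a deep resampling statement, not a restriction property.

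Even for $\bs\gamma=\bs 0$, the envelope is not identified this way. It is identified through the isomorphism with the GFF and first passage sets \cite{AruLupuSepulvedaFPSGFFCVGISO}. Likewise, to use Lemma~\ref{lem::multiple_log_mart} you need a martingale built from the soups, that is, a coupling with a field, which is exactly what the isomorphism supplies.

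\textbf{What is missing.} You need a way to bring the singular case into that framework. The obstruction is that the mean $\sum_\ell\gamma_\ell\Green(\Omega;u_\ell,\cdot)$ is unbounded, which \cite{AruLupuSepulvedaFPSGFFCVGISO} does not cover. The paper proceeds as follows.
\begin{enumerate}
\item It removes small discs $B(u_\ell,\eps)$ and applies the first-passage-set representation in $\Omega_\eps$, with bounded data $\sum_\ell\gamma_\ell\Green(\Omega;\cdot,u_\ell)$ on the circles (Proposition~\ref{prop::BES_truncated}).
\item It lets $\eps\to0$ in total variation for the path ensembles. The stretches of trajectories inside $B(u_\ell,\delta)$ are cut off, and one works on the event $E_\delta$ that the outermost loop-soup clusters surrounding the $u_\ell$ stay in $\Omega_\delta$ (Lemmas~\ref{lem::abs_cont_exc} and~\ref{lem::conv_envelop_1}).
\item It lets $\eps\to0$ in total variation for the level lines. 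The Radon--Nikodym derivative between the singular field and $\Gamma_\eps$, restricted to $\Omega_\delta$, tends to $1$ in $L^1$, and level lines are measurable with respect to the field (Lemma~\ref{lem::conv_envelop}).
\end{enumerate}
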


\begin{figure}[ht!]
\begin{center}
\includegraphics[width=0.2\textwidth]{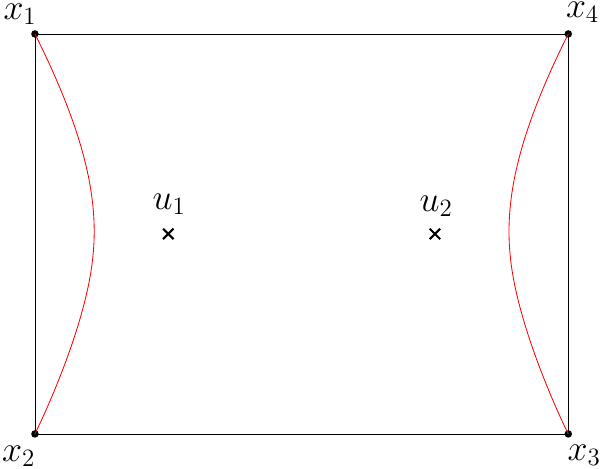}
\includegraphics[width=0.2\textwidth]{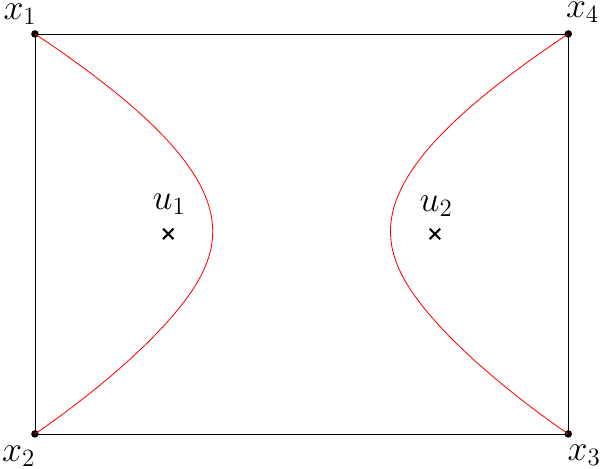}
\includegraphics[width=0.2\textwidth]{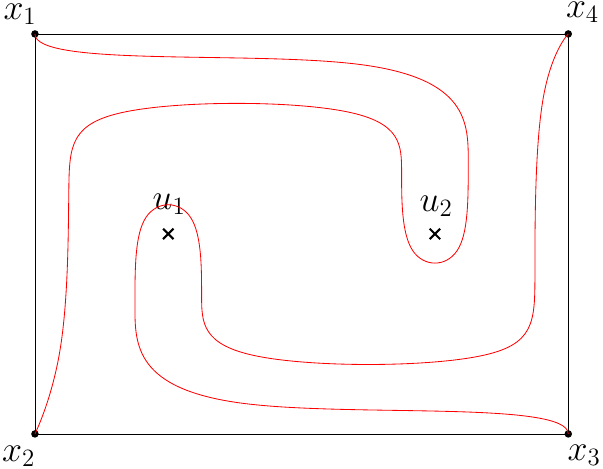}\\
\includegraphics[width=0.2\textwidth]{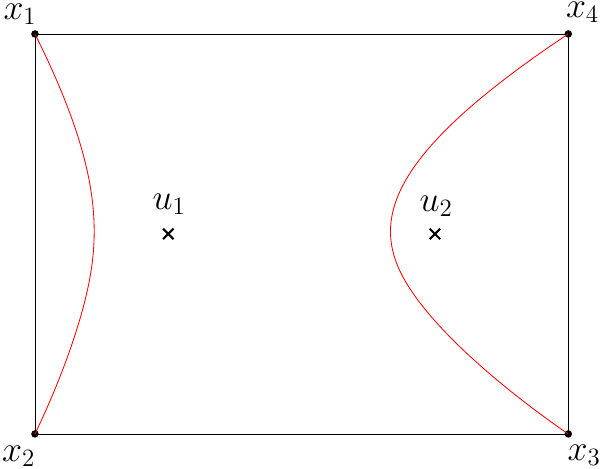}
\includegraphics[width=0.2\textwidth]{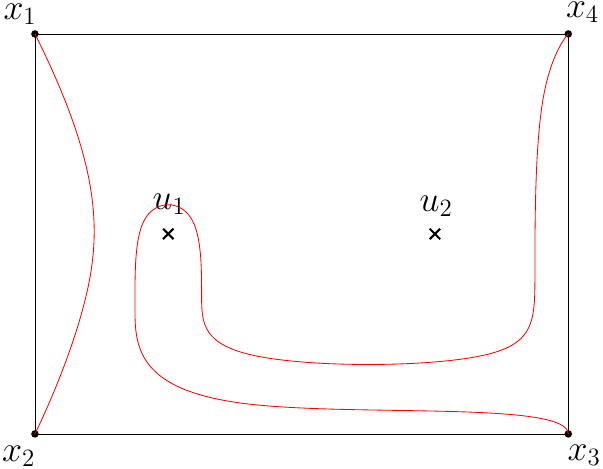}
\includegraphics[width=0.2\textwidth]{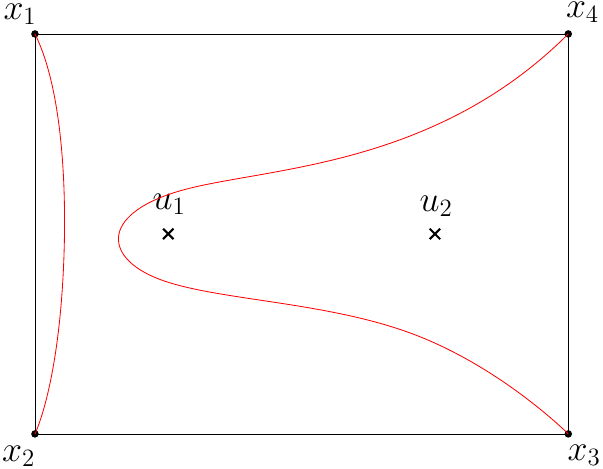}\\
\includegraphics[width=0.2\textwidth]{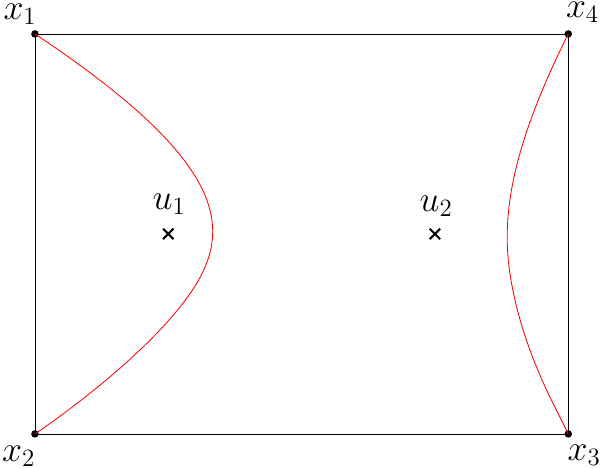}
\includegraphics[width=0.2\textwidth]{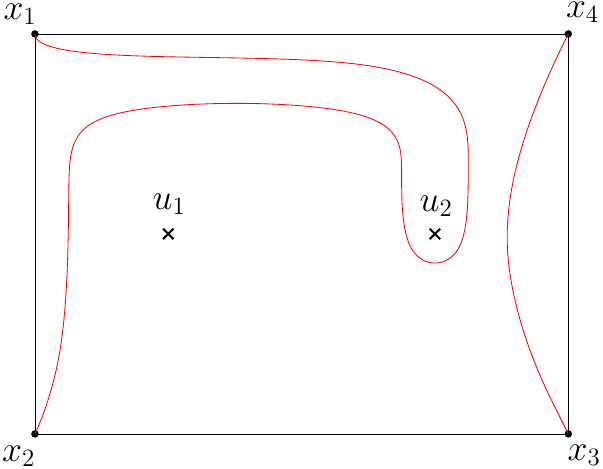}
\includegraphics[width=0.2\textwidth]{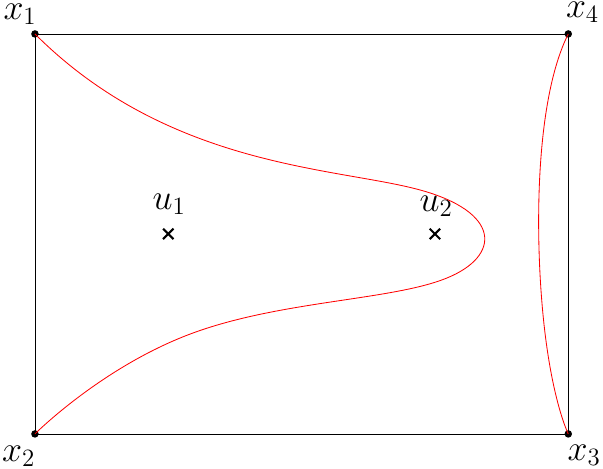}
\end{center}
\caption{\label{fig::envelop} 
Illustration for Poisson point processes in Theorem~\ref{thm::Brownian_represent} for $N=2$ and $M=2$: there are four marked points $x_1, x_2, x_3, x_4$ on the boundary and there are two marked points $u_1, u_2$ in the interior. 
The envelop consists of two disjoint curves connecting $\{x_1, x_2, x_3, x_4\}$ pairwise.
There are two possibilities for the paring and we only present the one when $\{x_1, x_2\}$ are paired and $\{x_3, x_4\}$ are paired. 
}
\end{figure}

Theorem~\ref{thm::Brownian_represent} was proved for $M=0$ in~\cite{AruLupuSepulvedaFPSGFFCVGISO} (see Remark~5.12 and Fig.~4 therein). We prove Theorem~\ref{thm::Brownian_represent} with $M\ge 1$ in Section~\ref{sec::Brownian_represent}. 
The main difference is that the logarithmic singularities lead to unbounded mean value near the marked points, whereas the Brownian representation in~\cite{AruLupuSepulvedaFPSGFFCVGISO} is formulated for bounded mean value. To address this issue, we introduce an $\epsilon$-cutoff around the singularities and apply the Brownian representation in the resulting multiply connected domain $\Omega_\eps$ in Proposition~\ref{prop::BES_truncated}.  We then prove that the corresponding Brownian path ensembles and level lines in $\Omega_\eps$ converge as $\eps\to 0$.
\begin{itemize}
\item
For the convergence of Brownian path ensembles, the main difficulty is that the Brownian paths in Theorem~\ref{thm::Brownian_represent} are rooted at the singularities, whereas those in $\Omega_\eps$ start or end on the boundaries of the holes surrounding the singularities. 
To compare the the two ensembles, we remove parts of the trajectories near the singularities and compare the remaining path ensembles, which yields convergence in total variation. See Lemma~\ref{lem::abs_cont_exc} and Lemma~\ref{lem::conv_envelop_1}.
\item
For the convergence of level lines, the main difficulty is that the GFFs are defined on the varying domains $\Omega_\eps$. Thus, convergence of the fields does not directly imply convergence of the corresponding level lines. We prove the convergence of Radon--Nikodym derivatives between the GFF laws away from the singularities and then use the measurability of the level lines with respect to the fields to obtain convergence in total variation.
\end{itemize}
We emphasize that the Brownian path representation in Theorem~\ref{thm::Brownian_represent} is valid for $\gamma_j\ge 0$ whereas the coupling in Theorem~\ref{thm::multiplelog_polygon} holds for $\gamma_j\in\R$. 

\subsection{Consequences: moment-generating functions}
\label{subsec::intro_moment_generating}
As consequences of Theorem~\ref{thm::multiplelog_polygon}, we derive moment-generating function of the conformal radius in Proposition~\ref{prop::onelog_CR_fourier} and two-point correlation function in Proposition~\ref{prop::twopoint_correlation}. 

\begin{proposition}\label{prop::onelog_CR_fourier}
Assume the same notation as in Theorem~\ref{thm::multiplelog_polygon}  with $M=1$.
Fix $u_1=u\in\Omega$ and $\gamma_1=\gamma\in\R$. For $\xi\in\C$ with $\Re{\xi}<(1+\gamma^2)/2$, we have
\begin{align}\label{eqn::onelog_CR_fourier}
\begin{split}
\E_{\gamma}\left[\left(\frac{\CR(\Omega; u)}{\CR(\Omega\setminus\bs{\eta}; u)}\right)^{\xi}\one\{u\in\Omega_{\bs{\eta}}^+\}\right]
=&\exp(\gamma(\pi-\Gamma_0( u)))\frac{\sin\left(\Gamma_0( u)\sqrt{2\xi-\gamma^2}\right)}{\sin\left(\pi\sqrt{2\xi-\gamma^2}\right)}. 
\end{split}
\end{align}
\end{proposition}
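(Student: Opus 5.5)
Using Theorem~\ref{thm::multiplelog_polygon} with $M=1$, I will transfer the expectation to the singular-free law $\PP=\PP_{\bs 0}$. With a single point, $\LE_\gamma(D;u)=\frac{\gamma^2}{2}\log\CR(D;u)$. On $\{u\in\Omega^+_{\bs\eta}\}$ the Radon--Nikodym derivative \eqref{eqn::multiplelog_polygon_RN} equals
\[
\left(\frac{\CR(\Omega\setminus\bs\eta;u)}{\CR(\Omega;u)}\right)^{\gamma^2/2}\exp\bigl(\gamma(\pi-\Gamma_0(u))\bigr).
\]
Hence the left-hand side of \eqref{eqn::onelog_CR_fourier} equals
\[
\exp\bigl(\gamma(\pi-\Gamma_0(u))\bigr)\,\E\left[\left(\frac{\CR(\Omega;u)}{\CR(\Omega\setminus\bs\eta;u)}\right)^{\xi-\gamma^2/2}\one\{u\in\Omega^+_{\bs\eta}\}\right].
\]
The proof therefore reduces to the singular-free identity, with $\zeta=\xi-\gamma^2/2$ and $\Re\zeta<1/2$:
\[
\E\left[\left(\frac{\CR(\Omega;u)}{\CR(\Omega\setminus\bs\eta;u)}\right)^{\zeta}\one\{u\in\Omega^+_{\bs\eta}\}\right]=\frac{\sin(\Gamma_0(u)\sqrt{2\zeta})}{\sin(\pi\sqrt{2\zeta})}.
\]

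I will prove this identity by a martingale and optional stopping argument. Explore the level lines one at a time, or jointly as in the proof of Theorem~\ref{thm::multiplelog_polygon}, and let $D_t$ be the component of the remaining domain containing $u$. Let $h_t(u)\in[0,\pi]$ be the value at $u$ of the harmonic function with the updated alternating boundary data, so that $h_0=\Gamma_0(u)$. By the standard GFF level line coupling, $h_t(u)$ is a continuous martingale. Its quadratic variation is the decrease of the Green's function on the diagonal, i.e. $\ud\langle h(u)\rangle_t=-\ud\log\CR(D_t;u)$ (normalisation: $\Green(D;u,w)\sim-\log|u-w|+\log\CR(D;u)$). After the time change $s=\log\CR(\Omega;u)-\log\CR(D_t;u)$, $h$ is a standard Brownian motion started at $\Gamma_0(u)$. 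It is stopped when it exits $(0,\pi)$, which is exactly when $u$ is cut off into $\Omega^+_{\bs\eta}$ (where $h=\pi$) or into $\Omega^-_{\bs\eta}$ (where $h=0$). At that moment $s=\log(\CR(\Omega;u)/\CR(\Omega\setminus\bs\eta;u))$.

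The process $e^{\zeta s}\sin(\sqrt{2\zeta}\,h_s)$ is a local martingale, since $\frac12\partial_h^2+\zeta=0$ on $\sin(\sqrt{2\zeta}h)$. It vanishes at $h=0$ and equals $e^{\zeta T}\sin(\sqrt{2\zeta}\pi)$ at $h=\pi$, where $T$ is the exit time. Optional stopping then gives the claim: $\E[e^{\zeta T}\one\{h_T=\pi\}]=\sin(\sqrt{2\zeta}\Gamma_0)/\sin(\sqrt{2\zeta}\pi)$. The right-hand side is even in $\sqrt{2\zeta}$, so the branch choice is irrelevant, and analytic continuation covers complex $\xi$.

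The main obstacle is justifying optional stopping and identifying the stopping time. Three points need care:
\begin{enumerate}
\item Exploring multiple level lines (or the full $\bs\eta$) yields a continuous martingale whose exit from $(0,\pi)$ corresponds exactly to $u$ being separated. This is a fact about the boundary values, which take only the values $0$ and $\pi$.
\item The total log-conformal-radius increment at separation equals $T$.
\item Integrability holds for $\Re\zeta<1/2$.
\end{enumerate}
For the third point, I will use that $T$ is the exit time of Brownian motion from an interval of length $\pi$, so $\E[e^{aT}]<\infty$ iff $a<\lambda_1=1/2$. This gives domination of the stopped martingale for $\Re\zeta<1/2$, matching the condition $\Re\xi<(1+\gamma^2)/2$. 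For complex $\zeta$, I will dominate $|e^{\zeta s}\sin(\cdot)|$ by $e^{\Re\zeta\, s}\cosh(|\Im\sqrt{2\zeta}|\pi)$. I will first treat real $\zeta<1/2$ and then extend by analyticity of both sides in this half-plane.
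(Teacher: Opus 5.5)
Your proposal is correct, and its first step (transferring to $\PP=\PP_{\bs{0}}$ via Theorem~\ref{thm::multiplelog_polygon} and shifting the exponent to $\zeta=\xi-\gamma^2/2$) is exactly the paper's. The two routes differ in how the singular-free identity is proved.

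The paper does not revisit the exploration. It takes the expectation of the Radon--Nikodym derivative of Theorem~\ref{thm::multiplelog_polygon} (with $M=1$) for both $\gamma$ and $-\gamma$. Solving the resulting $2\times 2$ linear system gives $\E[(\CR(\Omega\setminus\bs{\eta};u)/\CR(\Omega;u))^{\gamma^2/2}\one\{u\in\Omega_{\bs{\eta}}^+\}]=\sinh(\gamma\Gamma_0(u))/\sinh(\pi\gamma)$, which is the formula for real $\zeta\le 0$. It then continues analytically to $\Re\zeta<1/2$. For that step it must import the a priori bound $\E[(\CR(\Omega;u)/\CR(\Omega\setminus\bs{\eta};u))^{\beta}]<\infty$ for $\beta<1/2$ from the multiple $\SLE$ literature (Peltola--Wu, Feng--Wu).

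You instead identify $\log(\CR(\Omega;u)/\CR(\Omega\setminus\bs{\eta};u))$ with the exit time from $(0,\pi)$ of a Brownian motion started at $\Gamma_0(u)$, the exit side encoding whether $u\in\Omega_{\bs{\eta}}^+$. Everything you need is in the paper: the martingale $N_t(z)$ of Lemma~\ref{lem::GFFlog_levelline_coupling} at $\bs{\gamma}=0$; the identity~\eqref{eqn::coupling_calculation_aux3} for the quadratic variation; the domain Markov property to concatenate the level lines; and the strict bound $0<h<\pi$ while the component of $u$ still carries marked points.

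What each route buys:
\begin{itemize}
\item Your argument is self-contained and produces the threshold $1/2$ intrinsically, as the principal Dirichlet eigenvalue of $-\frac12\partial_h^2$ on $(0,\pi)$.
\item It treats complex $\zeta$ directly through the domination you wrote, so analytic continuation is optional. Only at $\zeta=0$ must the right-hand side be read as its limit $\Gamma_0(u)/\pi$.
\item It yields the full joint law of the conformal-radius ratio and the side.
\item The paper's argument is shorter once Theorem~\ref{thm::multiplelog_polygon} is available and illustrates its use, at the cost of the external moment bound.
\end{itemize}

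Incidentally, your method can even bypass Theorem~\ref{thm::multiplelog_polygon}. Under $\PP_{\gamma}$, by~\eqref{eqn::multiple_log_SDE} the same process $h$ is a Brownian motion with drift $\gamma$ in log-conformal-radius time. Girsanov's theorem then gives the factor $\exp(\gamma(\pi-\Gamma_0(u)))$ and the shift $\xi\mapsto\xi-\gamma^2/2$ directly.
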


We will prove Proposition~\ref{prop::onelog_CR_fourier} in Section~\ref{subsec::onelog_CR_fourier}. When $\gamma=0$, for $\beta\in\R$ with $\beta<1/2$, the moment-generating function~\eqref{eqn::onelog_CR_fourier} becomes
\begin{align}\label{eqn::CR_fourier}
\begin{split}
\E\left[\left(\frac{\CR(\Omega; u)}{\CR(\Omega\setminus\bs{\eta}; u)}\right)^{\beta}\one\{u\in\Omega_{\bs{\eta}}^+\}\right]
=&\frac{\sin\left(\Gamma_0( u)\sqrt{2\beta}\right)}{\sin\left(\pi\sqrt{2\beta}\right)}. 
\end{split}
\end{align}
The expectation in the LHS of~\eqref{eqn::CR_fourier} were considered for multiple $\SLE_{\kappa}$ in earlier literature.  
When $N=1$, denote by $\PP^{(\kappa)}_1$ the law of chordal $\SLE_{\kappa}$ in $(\Omega; x_1, x_2)$. It was derived in~\cite{SchrammSheffieldWilsonConformalRadii} that the expectation 
$\E^{(\kappa)}_1\left[\CR(\Omega\setminus\bs{\eta}; u)^{-\beta}\right]$
is a hypergeometric function when $\beta<1-\kappa/8$. 
Suppose $\PP^{(\kappa)}_N$ is a probability measure on $N$-chordal $\SLE_{\kappa}$ in the polygon $(\Omega; \bs{x})$ (see details in~\cite[Section~2]{FengWuRadialBPZFKIsing}).
The authors in~\cite[Section~2]{FengWuRadialBPZFKIsing} proved that the  expectation $\E^{(\kappa)}_N\left[\CR(\Omega\setminus\bs{\eta}; u)^{-\beta}\right]$ is finite for $\kappa\le 6$ as long as $\beta<1-\kappa/8$. 
However, it is usually hard to derive an explicit formula for such expectation when $N\ge 2$. Proposition~\ref{prop::onelog_CR_fourier} and~\eqref{eqn::CR_fourier} give an explicit expression for such expectation when $\kappa=4$ using the observation in Theorem~\ref{thm::multiplelog_polygon}. This is very special for $\kappa=4$. Moreover, the RHS of~\eqref{eqn::CR_fourier} also tells that the expectation in the LHS of~\eqref{eqn::CR_fourier} explodes at rate $(1-2\beta)^{-1}$ when $\beta\to 1/2$. 

\begin{proposition}\label{prop::twopoint_correlation}
Assume the same notation as in Theorem~\ref{thm::multiplelog_polygon}  with $M=2$.
Fix distinct $u_1, u_2\in\Omega$ and $\gamma_1=\gamma_2=\gamma\in\R$. For $\xi\in\C$ with $\Re{\xi}\le\gamma^2/2$, we have
\begin{align}\label{eqn::twopoint_correlation}
\begin{split}
&\E_{\bs{\gamma}}\left[\left(\frac{\CR(\Omega; u_1)}{\CR(\Omega\setminus\bs{\eta}; u_1)}\right)^{\xi}\left(\frac{\CR(\Omega; u_2)}{\CR(\Omega\setminus\bs{\eta}; u_2)}\right)^{\xi}\left(\one\{u_1\in\Omega_{\bs{\eta}}^+\}-\one\{u_2\in\Omega_{\bs{\eta}}^+\}\right)\right]\\
=&\exp\left(2(\xi-\gamma^2)\Green(\Omega; u_1, u_2)+\gamma(\pi-\Gamma_0(u_1)-\Gamma_0(u_2))\right)\frac{\sin\left((\Gamma_0( u_1)-\Gamma_0( u_2))\sqrt{2\xi-\gamma^2}\right)}{\sin\left(\pi\sqrt{2\xi-\gamma^2}\right)}. 
\end{split}
\end{align}
\end{proposition}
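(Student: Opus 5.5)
The plan is to reduce the two-point statement to Theorem~\ref{thm::multiplelog_polygon} by changing measure back to $\PP=\PP_{\bs 0}$, and then to use the same theorem with an auxiliary pair of parameters $(s,-s)$ to evaluate the resulting $\PP$-expectations. The key structural observation concerns the factor $\one\{u_1\in\Omega_{\bs\eta}^+\}-\one\{u_2\in\Omega_{\bs\eta}^+\}$. It vanishes unless one of $u_1,u_2$ lies in $\Omega^+_{\bs\eta}$ and the other in $\Omega^-_{\bs\eta}$. On that event $u_1,u_2$ lie in different components of $\Omega\setminus\bs\eta$, so $\Green(\Omega\setminus\bs\eta;u_1,u_2)=0$. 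Hence on the relevant events the reduced Green's energy of $\Omega\setminus\bs\eta$ contains no cross term.

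First I treat real $\xi\le\gamma^2/2$ and write $s:=\sqrt{\gamma^2-2\xi}\ge 0$. Set $R:=\CR(\Omega\setminus\bs\eta;u_1)^{s^2/2}\CR(\Omega\setminus\bs\eta;u_2)^{s^2/2}$, and let $P_{+-}:=\E[R\,\one\{u_1\in\Omega^+_{\bs\eta},u_2\in\Omega^-_{\bs\eta}\}]$ and $P_{-+}$ be defined analogously. By Theorem~\ref{thm::multiplelog_polygon} with $\bs\gamma=(\gamma,\gamma)$, the LHS of~\eqref{eqn::twopoint_correlation} equals $\E[\text{RN}_{(\gamma,\gamma)}\cdot F]$, where $F$ is the integrand. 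On the events $\{+-\}$ and $\{-+\}$, the product $\text{RN}_{(\gamma,\gamma)}F$ equals
\[
e^{\pi\gamma}\,\CR(\Omega;u_1)^{\xi}\CR(\Omega;u_2)^{\xi}\,\frac{R}{\exp(\LE_{(\gamma,\gamma)}(\Omega;\bs u))\,e^{\gamma(\Gamma_0(u_1)+\Gamma_0(u_2))}},
\]
since $\gamma^2/2-\xi=s^2/2$. So the LHS is an explicit deterministic constant times $P_{+-}-P_{-+}$.

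Next I apply Theorem~\ref{thm::multiplelog_polygon} with $\bs\gamma=(s,-s)$ and with $\bs\gamma=(-s,s)$, using only $\E[\text{RN}]=1$. For $(s,-s)$ the RN derivative contains $\exp(-s^2\Green(\Omega\setminus\bs\eta;u_1,u_2))R$ times the product of boundary factors. Those factors are $e^{\pi s}$ on $\{+-\}$, $e^{-\pi s}$ on $\{-+\}$, $1$ on $\{++\}$ (since $e^{\pi s}e^{-\pi s}=1$) and $1$ on $\{--\}$. Exactly the same holds for $(-s,s)$, except that the factors on $\{+-\}$ and $\{-+\}$ are swapped. The deterministic normalizations are $\exp(-s^2\Green(\Omega;u_1,u_2))\CR(\Omega;u_1)^{s^2/2}\CR(\Omega;u_2)^{s^2/2}$ times $e^{\pm s(\Gamma_0(u_1)-\Gamma_0(u_2))}$. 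Subtracting the two identities, the unknown contributions from $\{++\}$ and $\{--\}$, which carry the nontrivial weight $\exp(-s^2\Green(\Omega\setminus\bs\eta;\cdot))$, cancel identically. What remains is
\[
(e^{\pi s}-e^{-\pi s})(P_{+-}-P_{-+})=e^{-s^2\Green(\Omega;u_1,u_2)}\CR(\Omega;u_1)^{s^2/2}\CR(\Omega;u_2)^{s^2/2}\bigl(e^{s(\Gamma_0(u_1)-\Gamma_0(u_2))}-e^{-s(\Gamma_0(u_1)-\Gamma_0(u_2))}\bigr).
\]
This gives a $\sinh$ ratio, which is the stated $\sin$ ratio with $\sqrt{2\xi-\gamma^2}=\ii s$. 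Substituting into the constant from the previous step, the $\CR(\Omega;u_j)$ powers cancel, $\LE_{(\gamma,\gamma)}(\Omega)$ contributes $e^{-\gamma^2\Green}$, and $e^{-s^2\Green}=e^{(2\xi-\gamma^2)\Green}$. This should yield exactly the prefactor $\exp(2(\xi-\gamma^2)\Green(\Omega;u_1,u_2)+\gamma(\pi-\Gamma_0(u_1)-\Gamma_0(u_2)))$; I will check this bookkeeping carefully. The case $s=0$ follows by continuity, or directly as a limit.

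Finally I extend to complex $\xi$ with $\Re\xi\le\gamma^2/2$. The integrand has modulus equal to its value at $\Re\xi$, because $\CR(\Omega;u)/\CR(\Omega\setminus\bs\eta;u)\ge1$, so finiteness at real $\xi$ gives domination. Hence the LHS is analytic in the open half-plane and continuous up to its boundary. The RHS is an entire function of $2\xi-\gamma^2$ on this region, being even in the square root, with no zeros of the denominator for $\Re(2\xi-\gamma^2)\le0$ except the removable one at $0$. Uniqueness of analytic continuation from the real half-line then concludes the proof. I expect the main obstacle to be this justification step: that all $\PP$-expectations involved, in particular those on $\{++\}$ and $\{--\}$, are finite, so that the subtraction is legitimate. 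This is guaranteed because each $\E[\text{RN}]=1$ is a sum of nonnegative terms. The algebra of constants is routine.
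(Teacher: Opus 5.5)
Your proposal is correct and follows essentially the paper's route. You change measure from $\PP_{\bs\gamma}$ to $\PP$, using that the cross Green term vanishes on the separating events. You then evaluate the $\PP$-expectation by subtracting the identities $\E[\text{RN}]=1$ for $(s,-s)$ and $(-s,s)$, so that the $\{++\}$ and $\{--\}$ contributions cancel, and finally continue analytically in $\xi$. The only difference is ordering: the paper first continues the $\PP$-identity in the exponent to $\Re\xi\le 0$ and then changes measure, whereas you change measure at real $\xi$ and continue the final identity, which is immaterial.
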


We will prove Proposition~\ref{prop::twopoint_correlation} in Section~\ref{subsec::twopoint_correlation}. The formulae in Propositions~\ref{prop::onelog_CR_fourier}-\ref{prop::twopoint_correlation} are closely related to correlation functions in CFT. Conformal radius is essentially the one-point function of a primary field. 
Thus, Eq.~\eqref{eqn::CR_fourier} can be viewed as the one-point function of a vertex operator in a random SLE geometry, at the free-boson point ($\kappa=4$). 
The LHS of~\eqref{eqn::twopoint_correlation} is not the standard CFT two-point function. Instead, it is a two-point function with a difference of indicator functions. Eq.~\eqref{eqn::twopoint_correlation} can be viewed as two-point function of vertex operators in a random SLE geometry with a specific projection that selects configuration where the SLE curves separate the two points.

\subsection{Consequences: asymptotic}

\begin{proposition}\label{prop::asymp}
Assume the same notation as in Theorem~\ref{thm::multiplelog_polygon}  with $M=1$.
Fix $u_1=u\in\Omega$ and $\gamma_1=\gamma\in\R$. We have\footnote{
For two functions $F(\eps)$ and $G(\eps)$, 
we write $F(\eps)\sim G(\eps)$ if $\lim_{\eps\to 0}F(\eps)/G(\eps)\in (0,\infty)$.}
\begin{align}
&\PP_{\gamma}\left[\forall j\in \{1, \ldots, N\}, \dist(\eta^j, u)\le \eps\right]\sim \eps^{\frac{N^2+\gamma^2}{2}}\frac{\prod_{j=1}^{2N}\Poisson(\Omega; u, x_j)^{\frac{N}{2}}\exp(-\gamma\Gamma_0(\Omega; \bs{x}; u))}{\CR(\Omega; u)^{\frac{N^2+\gamma^2}{2}}\prod_{\substack{1\le i<j\le 2N\\ i\not\equiv j\mod 2}}\Poisson(\Omega; x_i, x_j)^{\frac{1}{2}}};
\label{eqn::asymp_forall}
\end{align}
where $\CR(\Omega; u)$ is conformal radius (see~\eqref{eqn::CR_H}-\eqref{eqn::CR_cov}), $\Poisson(\Omega; u, x_j)$ is Poisson kernel (see~\eqref{eqn::Poisson_H}-\eqref{eqn::Poisson_cov}), $\Poisson(\Omega; x_i, x_j)$ is boundary Poisson kernel (see~\eqref{eqn::bPoisson_H}-\eqref{eqn::bPoisson_cov}), and
the constant in $\sim$ depends on $(N, \gamma)$. 
\end{proposition}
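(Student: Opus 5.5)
Write $E_\eps=\{\forall j,\ \dist(\eta^j,u)\le\eps\}$. The plan is to reduce the asymptotic under $\PP_\gamma$ to the singular-free case using Theorem~\ref{thm::multiplelog_polygon}, and then to use a known estimate for $\gamma=0$.

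\textbf{Step 1: localizing the Radon--Nikodym derivative.} With $M=1$, the derivative \eqref{eqn::multiplelog_polygon_RN} is
\[
\left(\frac{\CR(\Omega\setminus\bs{\eta};u)}{\CR(\Omega;u)}\right)^{\gamma^2/2}\frac{\one\{u\in\Omega^+_{\bs\eta}\}e^{\pi\gamma}+\one\{u\in\Omega^-_{\bs\eta}\}}{e^{\gamma\Gamma_0(u)}} .
\]
On $E_\eps$, the component $D_u$ of $\Omega\setminus\bs\eta$ containing $u$ has $\dist(u,\partial D_u)\le\eps$. By Koebe's quarter theorem this gives $\eps\le\CR(\Omega\setminus\bs\eta;u)\le 4\eps$. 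Therefore
\[
\PP_\gamma[E_\eps]\asymp \eps^{\gamma^2/2}\CR(\Omega;u)^{-\gamma^2/2}e^{-\gamma\Gamma_0(u)}\Big(e^{\pi\gamma}\PP[E_\eps,u\in\Omega^+]+\PP[E_\eps,u\in\Omega^-]\Big).
\]

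\textbf{Step 2: the singular-free asymptotic.} When all $N$ curves come within $\eps$ of $u$, the cell containing $u$ is bounded by pieces of the level lines. Whether $u\in\Omega^+$ or $u\in\Omega^-$ is a parity condition on the local configuration; $2N$ arms emanate from a small neighbourhood. I would therefore use (or reprove) the $2N$-arm, i.e.\ interior $N$-curve, estimate for multiple $\SLE_4$ / GFF level lines:
\[
\PP[E_\eps,u\in\Omega^\pm]\sim c_\pm\,\eps^{N^2/2}\,\frac{\prod_j\Poisson(\Omega;u,x_j)^{N/2}}{\CR(\Omega;u)^{N^2/2}\prod_{i\not\equiv j}\Poisson(\Omega;x_i,x_j)^{1/2}} .
\]
The exponent $N^2/2$ is the $\kappa=4$ interior $2N$-arm exponent $\frac{16N^2-(\kappa-4)^2}{8\kappa}$. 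The boundary factors are the fusion of the Coulomb-gas partition function: the total pure partition function at $\kappa=4$ is $\prod_{i<j}|x_i-x_j|^{\pm 1/2}$, which gives the factor $\prod\Poisson(x_i,x_j)^{1/2}$ in the denominator. The interior insertion with charge fixed by the $N$ arms produces $\prod\Poisson(u,x_j)^{N/2}$.

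To prove this I would exploit $\kappa=4$: the martingale in Lemma~\ref{lem::multiple_log_mart}, or the explicit formula \eqref{eqn::CR_fourier} generalized to charges, yields the Laplace transform of $\log\CR(\Omega\setminus\bs\eta;u)$. The event that all curves approach $u$ corresponds to the leading pole/eigenvalue in a radial Loewner description with $N$ curves. Concretely, I would grow the curves radially toward $u$ with the radial martingale $\prod|g_t(x_i)-g_t(x_j)|^{\pm1/2}\prod|g_t'(x_j)|^{\ldots}$, and read off the asymptotic through a change of measure to $N$-radial $\SLE_4$ conditioned to hit $u$.

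\textbf{Step 3: combining.} Insert Step 2 into Step 1. The powers of $\eps$ and of $\CR$ combine to $(N^2+\gamma^2)/2$, and the $\gamma$-dependent constant $c_+e^{\pi\gamma}+c_-$ is absorbed into $\sim$. For $\sim$ I need a true limit of the ratio, not only two-sided bounds. Since the Koebe bounds lose a constant factor, I would refine Step 1: conditionally on $E_\eps$, the ratio $\CR(\Omega\setminus\bs\eta;u)/\eps$ should converge in law to a universal limit law, from the radial description in Step 2. That convergence lets the expectation of its $\gamma^2/2$-power converge.

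\textbf{Main obstacle.} The main obstacle is Step 2 together with this convergence in law: obtaining a sharp (not up-to-constants) $N$-arm asymptotic with the exact conformally covariant prefactor, jointly with the limiting law of the rescaled conformal radius. If only up-to-constant bounds were available, the statement would be read with $\sim$ as comparability.
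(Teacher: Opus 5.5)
\textbf{There is a genuine gap.} Your Step~1 uses a false inequality, and Steps~2--3 defer exactly the content of the proposition.

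\emph{Step 1.} On $E_\eps$ you only know $\dist(u,\partial D_u)\le\eps$. Schwarz and Koebe then give $\CR(\Omega\setminus\bs\eta;u)\le 4\eps$, but no lower bound: on $E_\eps$ the nearest curve may pass at distance $\ll\eps$ from $u$. Since the weight is $\CR(\Omega\setminus\bs\eta;u)^{\gamma^2/2}$, you only get $\PP_\gamma[E_\eps]\lesssim\eps^{\gamma^2/2}\PP[E_\eps]$. The matching lower bound needs a two-scale arm estimate, which you do not supply.

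\emph{Steps 2--3.} Theorem~\ref{thm::multiplelog_polygon} turns the statement into the exact identity $\PP_\gamma[E_\eps]=\CR(\Omega;u)^{-\gamma^2/2}\exp(-\gamma\Gamma_0(u))\,\E\big[\CR(\Omega\setminus\bs\eta;u)^{\gamma^2/2}\big(\ee^{\pi\gamma}\one\{u\in\Omega^+_{\bs\eta}\}+\one\{u\in\Omega^-_{\bs\eta}\}\big);E_\eps\big]$. So what you must prove is a sharp asymptotic for a conformal-radius-weighted, sign-resolved $2N$-arm event under $\PP$. That is the proposition in disguise, not a reduction to $\gamma=0$. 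Two ingredients are asserted rather than proved, as you acknowledge:
\begin{enumerate}
\item the sharp asymptotics of $\PP[E_\eps,u\in\Omega^\pm]$ with a common conformally covariant prefactor;
\item the convergence in law of $\CR/\eps$ given $E_\eps$.
\end{enumerate}
Falling back on comparability is not acceptable, since $\sim$ is defined by the existence of $\lim F/G\in(0,\infty)$.

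\emph{The missing idea.} Do not pass through $\gamma=0$; instead choose a parametrization in which the conformal-radius weight is deterministic. The paper works in $(\U;\ee^{\ii\bs\theta};0)$ and views the $N$ level lines as $2N$ arms grown from all $2N$ marked points: odd ones as level lines of $\Gamma+\gamma\log|\cdot|$, even ones of its negative.
\begin{enumerate}
\item Since $\LZ_{\GFF}^{(\gamma)}$ solves the radial BPZ system with $\aleph=\frac18+\frac{\gamma^2}{2}$ (Lemma~\ref{lem::tworadailBPZ}), the law of the arms under $\PP_\gamma$ is that of $2N$ independent radial $\SLE_4$ tilted by the multi-time martingale $M_{\bs t}(\LZ_{\GFF}^{(\gamma)})$.
\item Relative to $2N$-radial $\SLE_4$ (partition function $\LZradial$, $\aleph=\frac18-\frac{N^2}{2}$), the density is $\mathfrak g_{\bs t}'(0)^{-(N^2+\gamma^2)/2}\,\LZ_{\GFF}^{(\gamma)}(\bs\mslitdriv_{\bs t})/\LZradial(\bs\mslitdriv_{\bs t})$.
\item In the common-time parameter, $\mathfrak g_t'(0)=\ee^{2Nt}$. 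The $\CR^{\gamma^2/2}$ weight becomes the deterministic factor $\ee^{-N\gamma^2t}$, and the harmonic-function weight becomes an explicit function of the driving angles.
\item The Healey--Lawler convergence $\Pradial(t;\bs\theta,\bs\theta')=\mathsf p(\bs\theta')(1+O(\ee^{-at}))$ gives a genuine limit $\PP_\gamma[T>t]\sim \LZradial(\bs\theta)/\LZ_{\GFF}^{(\gamma)}(\bs\theta)\,\ee^{-(N^2+\gamma^2)Nt}$.
\item This is matched to $E_\eps$ at $t=|\log\eps|/(2N)$; the paper cites existing arguments for this step. Conformal covariance then transfers the result to $\Omega$.
\end{enumerate}
Your Step~2 sketch points in this direction, but it needs $2N$-radial (not $N$-radial) $\SLE_4$. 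Once you run it with $\LZ_{\GFF}^{(\gamma)}$ directly, your Steps~1 and~3 become unnecessary.
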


We will prove Proposition~\ref{prop::asymp} for general $\gamma\in\R$ in Section~\ref{sec::asymp}. 
The proof relies on tools ``common-time parameter" and ``multi-time martingale" developed recently from~\cite{HealeyLawlerNSidedRadialSLE, HuangPeltolaWuMultiradialSLE}.
If we denote by $\PP=\PP_0$ the measure when $\gamma=0$, we find 
\begin{align*}
\frac{\PP_{\gamma}\left[\forall j\in \{1, \ldots, N\}, \dist(\eta^j, u)\le \eps\right]}{\PP\left[\forall j\in \{1, \ldots, N\}, \dist(\eta^j, u)\le \eps\right]}\sim \eps^{\frac{\gamma^2}{2}}\frac{\exp(-\gamma\Gamma_0(\Omega; \bs{x}; u))}{\CR(\Omega; u)^{\frac{\gamma^2}{2}}}. 
\end{align*}
The extra $\frac{\gamma^2}{2}$ in the exponent comes from the influence of $\gamma\Green(\Omega; u, \cdot)$. As Green's function explodes as $-\log |u-z|$ when $z$ is close to $u$, this singularity makes it harder for the level lines to get close to $u$ and the probability is smaller by $\eps^{\frac{\gamma^2}{2}}$. 

\paragraph*{Outline.}
We give preliminaries in Section~\ref{sec::pre}. 
We prove Theorem~\ref{thm::multiplelog_polygon} and Propositions~\ref{prop::onelog_CR_fourier}-\ref{prop::twopoint_correlation} in Section~\ref{sec::GFF_levellines}. 
We prove Theorem~\ref{thm::Brownian_represent} in Section~\ref{sec::Brownian_represent}. 
We prove Proposition~\ref{prop::asymp} in Section~\ref{sec::asymp}. 

\paragraph*{Acknowledgment.}
The authors thank Fredrick Viklund and Yilin Wang for helpful discussion on the Hele-Shaw flow. 
H.W. is supported by New Cornerstone Investigator Program 100001127. 
H.W. is partly affiliated at Yanqi Lake Beijing Institute of Mathematical Sciences and Applications, Beijing, China.

\section{Preliminaries}
\label{sec::pre}
\subsection{Conformal radius, Green's function and Poisson kernel}
\paragraph*{Conformal radius.}
For a simply connected domain $\Omega\subsetneq\C$ and $z\in \Omega$, the \emph{conformal radius} of $\Omega$ seen from $z$ is defined by $\CR(\Omega; z) := 1/\phi'(z)$  where $\phi : \Omega \to \U$ is the conformal map such that $\phi(z)=0$ and $\phi'(z)>0$. 
In particular, for $\Omega=\HH$ and $z\in\HH$, we have
\begin{align}\label{eqn::CR_H}
\CR(\HH; z)=2\Im(z). 
\end{align}
Conformal radius is conformally covariant: for any conformal map $\varphi$ on $\Omega$, we have
\begin{align} \label{eqn::CR_cov}
\CR(\Omega; z)=|\varphi'(z)|^{-1}\CR(\varphi(\Omega); \varphi(z)).
\end{align}
Conformal radius can be bounded by Euclidean distance from both sides 
\begin{align}\label{eqn::CR_Euc}
\dist(z, \partial\Omega)\le \CR(\Omega; z)\le 4\dist(z, \partial\Omega)
\end{align}
due to Schwarz lemma and Koebe's one quarter theorem. 

\paragraph*{Green's function.}
For $\HH$, the \textit{Green's function} is defined as\footnote{Green's function can be defined by different multiplicative constant in different literature. The normalization we used here has the advantage that the harmonic function in~\eqref{eqn::multiplelog_harmonic_H} does not need a multiplicative constant.}
\begin{equation}\label{eqn::Green_H}
    \Green(\HH;z,w)=\log{\frac{|z-\overline{w}|}{|z-{w}|}}, \qquad z,w\in \HH.
\end{equation}
For a simply connected domain $\Omega\subsetneq \C$, we define the Green's function via conformal invariance:
\begin{equation}\label{eqn::Green_inv}
    \Green(\Omega;z,w)=\Green(\HH;\varphi(z),\varphi(w)), \qquad z,w\in \Omega,
\end{equation}
where $\varphi$ is any conformal map from $\Omega$ onto $\HH$.

Green's function can be viewed as $-2\pi\Delta^{-1}$ with Dirichlet boundary condition: for any $f\in C_0^{\infty}(\Omega)$, the following function 
\[-\frac{1}{2\pi}\int_{\Omega} \Green(\Omega; \cdot, w)f(w)\ud w\]
is a smooth function in $\Omega$ whose Laplacian is $f$ and whose boundary value on $\partial \Omega$ is zero.

\paragraph*{Poisson kernel.} 
For a $1$-polygon $(\HH; x)$ with $z\in\HH$, the \textit{Poisson kernel} is defined as 
\begin{equation}\label{eqn::Poisson_H}
\Poisson(\HH;z,x) :=\lim_{\eps\to 0}\frac{1}{2\pi\eps}\Green(\HH; z, x+\ii\eps)= \frac{\Im z}{\pi|z - x|^2}.
\end{equation}
For a general $1$-polygon $(\Omega; x)$ with $z\in\Omega$, we define the Poisson kernel via conformal covariance: 
\begin{equation}\label{eqn::Poisson_cov}
\Poisson(\Omega;z,x) = |\varphi'(x)|  \Poisson(\HH;\varphi(z),\varphi(x)),
\end{equation}
where $\varphi$ is any conformal map from $\Omega$ onto $\HH$.

\paragraph*{Boundary Poisson kernel.}
For a $2$-polygon $(\HH; x_1, x_2)$ with $x_1<x_2$, the \textit{boundary Poisson kernel} is defined as
\begin{align}\label{eqn::bPoisson_H}
\Poisson(\HH; x,y) :=\lim_{\eps\to 0}\frac{1}{\eps}\Poisson(\HH; x+\ii\eps, y)=\frac{1}{\pi (x_2-x_1)^2}. 
\end{align}
For a general $2$-polygon $(\Omega; x_1, x_2)$, we define the boundary Poisson kernel via conformal covariance: 
\begin{align}\label{eqn::bPoisson_cov}
\Poisson(\Omega; x_1, x_2) := |\varphi'(x_1)\varphi'(x_2)|\Poisson(\HH; \varphi(x_1),\varphi(x_2)), 
\end{align}
where $\varphi$ is any conformal map from $\Omega$ onto $\HH$ with $\varphi(x_1)<\varphi(x_2)$. 
In particular, for the unit disc $\U$ with $\theta_1<\theta_2<\theta_1+2\pi$, 

\subsection{Measures on Brownian paths}
\label{subsec::Brownianpath}
In this section, we introduce measures on Brownian paths mentioned in Theorem~\ref{thm::Brownian_represent}.  
We follow the same normalization as in~\cite{LawlerWernerBrownianLoopsoup}. 
Let $\bm(\C; z, \cdot; t)$ denote the law of a standard two-dimensional Brownian motion starting from $z$. We write 
\[\bm(\C; z, \cdot; t)=\int_{\C}\ud A(w)\;\bm(\C; z, w; t),\]
where $A$ denotes area and $\bm(\C; z, w; t)$ is a measure supported on paths from $z$ to $w$ in time $t$. The measure $\bm(\C; z, w)$ is defined by 
\[\bm(\C; z, w)=\int_0^{\infty}\ud t\;\bm(\C; z, w; t). \]

\paragraph*{Brownian bridge and Green's function.}
If $\Omega$ is a domain and $z,w\in\Omega$, we define $\bm(\Omega; z, w)$ to be $\bm(\C; z, w)$ restricted to paths contained in $\Omega$. 
If $\Omega$ is such that a Brownian motion in $\Omega$ eventually exits $\Omega$ and 
$z\neq w$, then the total mass of $\bm(\Omega; z, w)$ is finite which is the same as $\frac{1}{\pi}$ times Green's function~\eqref{eqn::Green_H}-\eqref{eqn::Green_inv}. 
We write 
\begin{align}\label{eqn::BM_zw}
\bm(\Omega; z, w)[\cdot]=\frac{1}{\pi}\Green(\Omega; z, w)\bm^{\#}(\Omega; z, w)[\cdot], 
\end{align}
where $\bm^{\#}(\Omega; z, w)$ is a probability measure on paths in $\Omega$ from $z$ to $w$. 
We call $\bm(\Omega; z, w)$ the Brownian bridge measure.
If $w=z$, $\bm(\Omega; z, z)$ is a $\sigma$-finite measure whose total mass is infinite. 
We call $\bm(\Omega; z, z)$ the Brownian bubble measure.

\paragraph*{Brownian bridge and Poisson kernel.}
Suppose $\Omega$ is simply connected and $\partial\Omega$ is piecewise analytic. Let $B$ be a Brownian motion starting at $z\in\Omega$ and stopped at its exit time $\tau_{\Omega}=\inf\{t: B_t\not\in\Omega\}$. Define $\bm(\Omega; z, \partial\Omega)$ to be the law of $(B_t)_{0\le t\le \tau_{\Omega}}$. As $\partial\Omega$ is piecewise analytic, we can write 
\[\bm(\Omega; z, \partial\Omega)=\int_{\partial\Omega}\ud y \;\bm(\Omega; z, y),\]
where $\bm(\Omega; z, y)$ for $z\in \Omega$ and $y\in\partial\Omega$ denotes a measure on paths in $\Omega$ from $z$ to $y$. The total mass of $\bm(\Omega; z, y)$ is Poisson kernel~\eqref{eqn::Poisson_H}-\eqref{eqn::Poisson_cov}. We write 
\begin{align}\label{eqn::BM_zy}
\bm(\Omega; z, y)[\cdot]=\Poisson(\Omega; z, y)\bm^{\#}(\Omega; z, y)[\cdot], 
\end{align}
where $\bm^{\#}(\Omega; z, y)$ is a probability measure on paths in $\Omega$ starting from $z$ and hitting $\partial\Omega$ at $y$. 
We also call $\bm(\Omega; z, y)$ the Brownian bridge measure.

\paragraph*{Brownian excursion and boundary Poisson kernel.}
For $x, y\in\partial\Omega$, Brownian excursion measure $\bm(\Omega; x, y)$ can be defined as 
\[\bm(\Omega; x, y)=\lim_{\eps\to 0}\frac{1}{\eps}\bm(\Omega; x+\eps\bs{n}_x, y).\]
The total mass of $\bm(\Omega; x, y)$ is boundary Poisson kernel~\eqref{eqn::bPoisson_H}-\eqref{eqn::bPoisson_cov}. We write 
\begin{align}\label{eqn::BM_xy}
\bm(\Omega; x, y)[\cdot]=\Poisson(\Omega; x, y)\bm^{\#}(\Omega;x, y)[\cdot],
\end{align}
where $\bm^{\#}(\Omega;x, y)$ is a probability measure on paths in $\Omega$ from $x$ to $y$. 

\paragraph*{Brownian loop measure.}
Brownian loop measure $\blm$ is a measure on unrooted loops which is defined by
\[\blm=\int_{\C}\ud A(z)\int_0^{\infty}\frac{1}{2\pi t^2}\bm^{\#}(\C; z,z; t).\]
For a domain $\Omega$, we define Brownian loop measure in $\Omega$ to be $\blm$ restricted to the curves in $\Omega$. 
By construction, Brownian loop measure satisfies the restriction property. It also satisfies conformal invariance, see~\cite[Proposition~6]{LawlerWernerBrownianLoopsoup}.
\section{Level lines of Gaussian free field}
\label{sec::GFF_levellines}
We will prove Theorem~\ref{thm::multiplelog_polygon} and Propositions~\ref{prop::onelog_CR_fourier} and~\ref{prop::twopoint_correlation} in this section. 
We first give preliminaries of GFF and Loewner chains in Section~\ref{subsec::pre_GFF}. 
Then, we derive the coupling between GFF and level lines in Section~\ref{subsec::GFF_levellines} where the martingale calculation in Lemma~\ref{lem::multiple_log_mart} plays the key role. We complete the proof of Theorem~\ref{thm::multiplelog_polygon} in Section~\ref{subsec::GFF_cascade} by a careful cascade analysis.
Using the Radon-Nikodym derivative in Theorem~\ref{thm::multiplelog_polygon}, we complete the proof of Proposition~\ref{prop::onelog_CR_fourier} in Section~\ref{subsec::onelog_CR_fourier} and complete the proof of Proposition~\ref{prop::twopoint_correlation} in Section~\ref{subsec::twopoint_correlation}. 
 
\subsection{GFF and partition functions}
\label{subsec::pre_GFF}

\paragraph*{Gaussian free field (GFF).}
We will give a very brief summary on zero-boundary Gaussian free field (GFF), more detailed information can be found in~\cite{SheffieldGFFMath, SchrammSheffieldContinuumGFF, MillerSheffieldIG1, WangWuLevellinesGFFI}. 
Suppose $\Omega\subset\C$ is a domain with harmonically non-trivial boundary. For $f,g\in L^2(\Omega)$, we denote by $(f,g)$ the inner product in $L^2(\Omega)$: 
\[(f,g)=\int_\Omega f(z)g(z)\ud z.\]
The Sobolev space $H_0^1(\Omega)$ is the Hilbert space closure of $C_0^{\infty}(\Omega)$ with respect to the Dirichlet inner product
\[(f,g)_{\nabla}=\frac{1}{2\pi}\int \nabla f\cdot \nabla g \; \ud z. \]

The zero-boundary GFF $\Gamma$ on $\Omega$ is given by 
$\Gamma=\sum_n X_n f_n$
where $\{X_n\}$ is a sequence of i.i.d. normal Gaussian random variables and $\{f_n\}$ is an orthonormal basis for $H_0^1(\Omega)$. Such a sum does not converge in $H_0^1(\Omega)$, but it does converge in an appropriate space of distributions: the limit $\sum_n X_n(f_n, g)$ almost surely exists for all $g\in C_0^{\infty}(\Omega)$, and the limit value, denote by $(\Gamma, g)$, as a function of $g$ is almost surely a continuous functional on $C_0^{\infty}(\Omega)$. For any $g\in C_0^{\infty}(\Omega)$, $(\Gamma, g)$ is a mean-zero Gaussian with variance 
\[\iint_{\Omega\times \Omega}g(z)g(w)\Green(\Omega; z,w)\ud z\ud w.\] 

The GFF with boundary data $\Gamma_0$ is the sum of the zero-boundary GFF on $\Omega$ and the function in $\Omega$, still denoted by $\Gamma_0$, which is harmonic in $\Omega$ and is equal to $\Gamma_0$ on $\partial \Omega$. 

\paragraph*{Partition functions.}
The law of the level line in Theorem~\ref{thm::multiplelog_polygon} is encoded by its partition function which we define in this paragraph. We assume the same notations as in Theorem~\ref{thm::multiplelog_polygon} and assume $\Omega=\HH$. 
Fix $N\ge 1$ and $\bs{x}=(x_1, \ldots, x_{2N})$ with $x_1<\cdots<x_{2N}$. The harmonic function $\Gamma_0$ in Theorem~\ref{thm::multiplelog_polygon} has the following expression when $\Omega=\HH$: 
\begin{equation}\label{eqn::multiplelog_harmonic_H}
\Gamma_0(\HH; \bs{x}; z)=\sum_{1\le k\le 2N}(-1)^k\arg(z-x_k),\quad\text{ for }z\in\HH.
\end{equation}
For $\bs{u}=(u_1, \ldots, u_M)\in\HH^M$ where $u_1, \ldots, u_M$ are distinct, we define partition function for $\Omega=\HH$ by 
\begin{align}\label{eqn::multiplelog_pf_H}
\LZgff^{(\bs{\gamma})}(\HH; \bs{x};\bs{u}):=\underbrace{\prod_{1\le i<k\le 2N}(x_k-x_i)^{\frac{1}{2}(-1)^{k-i}}}_{\LZgff(\HH; \bs{x}):=}& \times\exp\left(\LE_{\bs{\gamma}}(\HH; \bs{u})+\sum_{\ell=1}^M\gamma_\ell\Gamma_0(\HH; \bs{x}; u_\ell)\right),
\end{align}
where $\LE_{\bs{\gamma}}$ is reduced Green's energy~\eqref{eqn::reducedGreenenergy_def}. 
Suppose $(\Omega; \bs{x})$ is $2N$-polygon with $\bs{u}=(u_1, \ldots u_M)$ where $u_1, \ldots, u_M\in \Omega$ are distinct. 
We extend the partition function $\LZ_{\GFF}^{(\bs{\gamma})}$ in~\eqref{eqn::multiplelog_pf_H} to $(\Omega; \bs{x})$ with $\bs{u}$ via conformal covariance:
\begin{align}\label{eqn::multiplelog_pf_cov}
\LZ_{\GFF}(\Omega; \bs{x}; \bs{u})=&\prod_{j=1}^{2N}|\varphi'(x_j)|^{1/4}\times\LZ_{\GFF}(\HH; \varphi(\bs{x}); \varphi(\bs{u}));\\
\LZ_{\GFF}^{(\bs{\gamma})}(\Omega; \bs{x}; \bs{u})=&\prod_{j=1}^{2N}|\varphi'(x_j)|^{1/4}\times\prod_{\ell=1}^M|\varphi'(u_{\ell})|^{-\frac{\gamma_{\ell}^2}{2}}\times\LZ_{\GFF}^{(\bs{\gamma})}(\HH; \varphi(\bs{x}); \varphi(\bs{u})); 
\end{align}
where $\varphi$ and any conformal map from $\Omega$ onto $\HH$ with $\varphi(x_1)<\cdots<\varphi(x_{2N})$. 
We may also write
\begin{align*}
\LZ_{\GFF}^{(\bs{\gamma})}(\Omega; \bs{x}; \bs{u})=\LZ_{\GFF}(\Omega; \bs{x})\times \exp\left(\LE_{\bs{\gamma}}(\Omega; \bs{u})+\sum_{\ell=1}^M\gamma_\ell\Gamma_0(\Omega; \bs{x}; u_\ell)\right).
\end{align*}
In particular, when $N=0$, we have 
\begin{align}
\LZ_{\GFF}(\Omega; \emptyset)=1, \qquad \LZ_{\GFF}^{(\bs{\gamma})}(\Omega; \emptyset; \bs{u})=\exp\left(\LE_{\bs{\gamma}}(\Omega; \bs{u})\right). 
\end{align}


\paragraph*{Loewner chain.}
Fix $x\in\R$ and $T>0$. Let $\eta: [0,T]\to\overline{\HH}$ be a continuous simple curve such that $\eta_0=x$ and $\eta_{(0,T)}\subset\HH$. For each $t\in[0,T]$, let $g_t$ be the 
unique conformal map from $\HH\setminus\eta_{[0,t]}$ onto $\HH$ with the normalization $\lim_{z\to\infty}|g_t(z)-z|=0$. 
We say that $\eta$ is parameterized by the half-plane capacity if, for all $t\in [0,T]$, 
\[g_t(z)=z+\frac{2t}{z}+o(1/|z|), \qquad\text{as }z\to\infty.\]
These maps $\{g_t: t\in [0,T]\}$ solve the Loewner equation
\begin{equation*}
	\partial_t g_t(z)=\frac{2}{g_t(z)-W_t}, \qquad g_0(z)=z,
\end{equation*}
where $W: [0,T]\to \R$ is a continuous function called the driving function of $\eta$. 

Fix $\kappa>0$ and $x\in\R$. The process $\SLE_{\kappa}$ in $(\HH; x, \infty)$ is defined as Loewner chain with driving function $W_t=\sqrt{\kappa}B_t+x$ where $B_t$ is standard one-dimensional Brownian motion. When $\kappa\in (0,4]$, it is almost surely a continuous simple curve. For a $2$-polygon $(\Omega; x_1, x_2)$, let $\varphi: \Omega\to\HH$ be a conformal map with $\varphi(x_2)=\infty$, then $\SLE_{\kappa}$ in $(\Omega; x_1, x_2)$ is defined as the image of $\SLE_{\kappa}$ in $(\HH; \varphi(x_1), \infty)$ under $\varphi^{-1}$. 

\subsection{Level lines of GFF}
\label{subsec::GFF_levellines}
\begin{lemma}\label{lem::multiple_log_mart}
Assume the same notation as in Theorem~\ref{thm::multiplelog_polygon} and assume $\Omega=\HH$ and $x_1<\cdots<x_{2N}$. 
Recall that $\LZgff^{(\bs{\gamma})}(\HH; \bs{x};\bs{u})$ is defined in~\eqref{eqn::multiplelog_pf_H}. 
The following process is a local martingale for $\SLE_4$ in $(\HH; x_{2j-1}, \infty)$: 
\begin{align}\label{eqn::multiple_log_mart}
\begin{split}
M_t(\LZgff^{(\bs{\gamma})}):=&\prod_{\substack{1\le i\le 2N\\i\neq 2j-1}}g_t'(x_i)^{\frac{1}{4}}\times \prod_{\ell=1}^M |g_t'(u_{\ell})|^{-\frac{\gamma_{\ell}^2}{2}}\times\LZgff^{(\bs{\gamma})}(\HH; \bs{X}_t; g_t(\bs{u})),
\end{split}
\end{align} 
where
$\bs{X}_t=(g_t(x_1), \ldots, g_t(x_{2j-2}), W_t, g_t(x_{2j}), \ldots, g_t(x_{2N}))$ and $g_t(\bs{u})=(g_t(u_1), \ldots, g_t(u_M))$.
Moreover, suppose $\eta$ has the same law as $\SLE_4$ in $(\HH; x_{2j-1}, \infty)$ weighted by the local martingale~\eqref{eqn::multiple_log_mart}, then its driving function $W_t$ solves
\begin{equation}\label{eqn::multiple_log_SDE}
\ud W_t=2\ud B_t+\sum_{i\neq 2j-1}\frac{2(-1)^{i}}{g_t(x_i)-W_t}\ud t+\sum_{1\le\ell\le M}\Im\frac{4\gamma_\ell}{g_t(u_\ell)-W_t}\ud t. 
\end{equation}
\end{lemma}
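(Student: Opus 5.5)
The plan is a direct It\^o computation, organized by factoring $M_t$. Write $\LZgff^{(\bs{\gamma})}(\HH;\bs{x};\bs{u})=\LZgff(\HH;\bs{x})\exp(F(\bs{x};\bs{u}))$ with $F=\LE_{\bs{\gamma}}(\HH;\bs{u})+\sum_\ell\gamma_\ell\Gamma_0(\HH;\bs{x};u_\ell)$. Then
\[
M_t=\Big(\prod_{i\neq 2j-1}g_t'(x_i)^{1/4}\,\LZgff(\HH;\bs{X}_t)\Big)\times \prod_{\ell}|g_t'(u_\ell)|^{-\gamma_\ell^2/2}\exp\big(F(\bs{X}_t;g_t(\bs{u}))\big)=:M^{(0)}_t\,N_t .
\]
The first factor $M^{(0)}_t$ is the standard $\gamma=0$ local martingale for $\SLE_4$. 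This is the usual statement that $\LZgff(\HH;\bs{x})$ solves the BPZ equation with $\kappa=4$ and conformal weight $h=1/4$. I will check it quickly, or quote it from the multiple $\SLE_4$/GFF literature. Its drift contribution gives $\kappa\,\partial_{x_{2j-1}}\log\LZgff=2\sum_{i\ne 2j-1}\frac{(-1)^{i}}{g_t(x_i)-W_t}$ after the sign bookkeeping $(-1)^{k-i}$. This is the first sum in \eqref{eqn::multiple_log_SDE}.

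Next I handle $N_t$. Set $Z^\ell_t=g_t(u_\ell)-W_t$. I use $\partial_t g_t(u)=2/Z_t$ and $\partial_t\log g_t'(u)=-2/Z_t^2$. For each $t$ the map $g_t$ sends $\HH\setminus\eta$ to $\HH$, and $\Gamma_0(\HH;\bs{X}_t;g_t(u))$ is the harmonic function with the alternating boundary values in the slit domain, so $\Gamma_0(\HH;\bs{X}_t;g_t(u))=\sum_k(-1)^k\arg(g_t(u)-X^k_t)$. Its time evolution has a finite-variation part from the $g_t(x_i)$ and $g_t(u)$. It also has a martingale part $-\partial_W\arg(Z)\,\ud W=-\Im\frac{1}{Z}\,\ud W$ up to sign; the $W$ coordinate carries sign $(-1)^{2j-1}=-1$, so the contribution is $\Im(1/Z)\,\ud W$. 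For the energy terms I write $\log\CR(\HH;g_t(u))=\log(2\Im g_t(u))$ and $\Green(\HH;g_t(u_i),g_t(u_k))=\log|g_t(u_i)-\overline{g_t(u_k)}|/|g_t(u_i)-g_t(u_k)|$. These are finite variation in $t$, and I differentiate them using the Loewner equation.

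I then compute $\ud\log N_t$ and $\ud\log M^{(0)}_t$ and impose that the total drift of $\log M_t$ plus $\frac12\ud\langle\log M\rangle$ vanishes. With $\ud\langle W\rangle=4\,\ud t$, the martingale part of $\log N_t$ is $\sum_\ell\gamma_\ell\Im(1/Z^\ell_t)\,\ud W_t$. The checks split into groups. First come the terms linear in $\gamma_\ell$: the cross-variation with $M^{(0)}$ gives $4\gamma_\ell\Im(1/Z^\ell)\sum_i\frac{(-1)^i/2}{g_t(x_i)-W}$, and this must cancel the $\ud t$-drift of $\Gamma_0$ coming from the motion of the $g_t(x_i)$ and $g_t(u_\ell)$. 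This is the identity expressing that $\arg$ is harmonic under the Loewner flow; I expect $\partial_t\arg(g_t(u)-g_t(x_i))=\Im\frac{-2}{Z\,(g_t(x_i)-W)}$-type terms to match exactly. Second come the diagonal $\gamma_\ell^2$ terms: $\frac12\cdot4\gamma_\ell^2(\Im 1/Z)^2$ must cancel $-\frac{\gamma_\ell^2}{2}\Re(-2/Z^2)$ together with $\frac{\gamma_\ell^2}{2}\partial_t\log\Im g_t(u)=\frac{\gamma_\ell^2}{2}\Im(2/Z)/\Im Z$. Writing $Z=a+\ii b$ reduces this to an elementary rational identity. Third come the cross terms $\gamma_i\gamma_k$: $4\gamma_i\gamma_k\Im(1/Z^i)\Im(1/Z^k)$ must cancel $\gamma_i\gamma_k\partial_t\Green(\HH;g_t(u_i),g_t(u_k))$. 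This is again a direct rational identity; I will check it by writing $\Green=\Re\log(Z^i-\overline{Z^k})-\Re\log(Z^i-Z^k)$ and differentiating.

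Once the local martingale property holds, Girsanov's theorem gives the SDE. Weighting by $M_t$ adds the drift $\ud\langle W,\log M\rangle_t=4\,\partial_W\log M\,\ud t$. This is the BPZ term from $M^{(0)}$ plus $\sum_\ell 4\gamma_\ell\Im(1/Z^\ell_t)\,\ud t$, which is \eqref{eqn::multiple_log_SDE} since $\Im\frac{4\gamma_\ell}{g_t(u_\ell)-W_t}$ is that same quantity. I expect the main obstacle to be the $\gamma_\ell$-linear cancellation in the first group, because of the sign conventions in $\Gamma_0$ and in $\LZgff$. To settle it, I would first verify the case $N=1$ with $x_2$ sent to $\infty$, where $\Gamma_0=\pi-\arg(Z)$ and the computation is minimal, and then extend by linearity over $k$.
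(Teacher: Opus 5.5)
Your proposal is correct and is essentially the paper's proof. The paper also factors out the $\bs{\gamma}=0$ Schramm--Wilson martingale $M_t(\LZgff)$, shows by It\^o's formula that the ratio $N_t$ is a local martingale, and finishes with Girsanov exactly as you do; the only difference is that it computes under the tilted law $\PP(\LZgff)$, where your cross-variation term is already absorbed into the drift of $W_t$. One thing to add when you carry out the $\gamma_\ell$-linear check: your list of contributions to $\ud\Gamma_0$ omits the It\^o term $\tfrac12\cdot 4\,\partial_W^2\arg(g_t(u_\ell)-W_t)\,\ud t=-2\Im\big((Z^\ell_t)^{-2}\big)\ud t$, which is needed because it cancels the contribution $\Im\big(2(Z^\ell_t)^{-2}\big)\ud t$ of $\partial_t g_t(u_\ell)$ to $\ud\arg(g_t(u_\ell)-W_t)$, and this is exactly where $\kappa=4$ is used.
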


\begin{proof}
To simplify the notation, we assume $j=1$ and the other cases can be proved similarly. 
Denote by $\PP(\LZ_{\GFF})$ the law of $\SLE_4(-2, +2, \ldots, -2)$ from $x_{1}$ to $\infty$ with force points $(x_2, \ldots, x_{2N})$, and the force weight of $x_i$ equals $2(-1)^{i-1}$ for $2\le i\le 2N$. 
By~\cite[Theorem 6]{SchrammWilsonSLECoordinatechanges}, the process $\{M_t(\LZgff)\}_{t\ge 0}$ is a local martingale for $\SLE_4$ in $(\HH; x_{1}, \infty)$. 
Moreover, the law of $\SLE_4$ in $(\HH; x_{1}, \infty)$ weighted by the local martingale $M_t(\LZgff)$ is the same as the law of $\PP(\LZ_{\GFF})$ whose driving function solves
\begin{equation}\label{eqn::GFF_alternating_SDE}
\ud W_t=2\ud B_t+\sum_{i=2}^{2N}\frac{2(-1)^{i}}{g_t(x_i)-W_t}\ud t. 
\end{equation}
Thus, it suffices to prove that $\{M_t(\LZgff^{(\bs{\gamma})})/M_t(\LZgff)\}_{t\ge 0}$ is a local martingale under $\PP(\LZ_{\GFF})$. 

\medbreak
Denote $\Gamma^0_t(z):=\Gamma_0(\HH; W_t,g_t(x_2),\ldots,g_t(x_{2N}); g_t(z))$. From the definition in~\eqref{eqn::multiplelog_harmonic_H}, we have
\begin{equation}\label{eqn::coupling_calculation_aux1}
\ud \Gamma_t^0(z)=\Im\frac{1}{g_t(z)-W_t}\left(\ud W_t-\sum_{i=2}^{2N}\frac{2(-1)^{i}}{g_t(x_i)-W_t}\ud t\right).
\end{equation}
For the Green's function in $\HH$~\eqref{eqn::Green_H}, we have
\begin{equation}\label{eqn::coupling_calculation_aux2}
\ud \Green(\HH; g_t(z), g_t(w))=-4\Im\frac{1}{g_t(z)-W_t}\Im\frac{1}{g_t(w)-W_t}\ud t.
\end{equation}
For the conformal radius in $\HH$~\eqref{eqn::CR_H}, we have 
\begin{equation}\label{eqn::coupling_calculation_aux3}
\ud \left(\log\CR(\HH; g_t(z))-\log|g_t'(z)|\right)=-4\left(\Im\frac{1}{g_t(z)-W_t}\right)^2\ud t. 
\end{equation}
Denote 
\begin{align}\label{eqn::multiple_log_SDE_aux1}
\begin{split}
L_t:=\log\frac{M_t(\LZgff^{(\bs{\gamma})})}{M_t(\LZgff)}=&\sum_{1\le i<k\le M} \gamma_i \gamma_k \Green(\HH;g_t(u_i),g_t(u_k))\\
&+\sum_{1\le \ell\le M}\frac{\gamma_{\ell}^2}{2}\left(\log\CR(\HH; g_t(u_{\ell}))-\log|g_t'(u_{\ell})|\right)+\sum_{1\le \ell\le M}\gamma_{\ell}\Gamma_t^0(u_{\ell}). 
\end{split}
\end{align}
Plugging~\eqref{eqn::GFF_alternating_SDE},\eqref{eqn::coupling_calculation_aux1},\eqref{eqn::coupling_calculation_aux2} and~\eqref{eqn::coupling_calculation_aux3} into~\eqref{eqn::multiple_log_SDE_aux1}, we have
\begin{align*}
\ud L_t=&-4\sum_{1\le i<k\le M}\Im\frac{\gamma_i}{g_t(u_i)-W_t}\Im\frac{\gamma_k}{g_t(u_k)-W_t}\ud t\\
&-2\sum_{1\le \ell\le M}\left(\Im\frac{\gamma_{\ell}}{g_t(u_{\ell})-W_t}\right)^2\ud t+\sum_{1\le \ell\le M}\Im\frac{2\gamma_{\ell}}{g_t(u_{\ell})-W_t}\ud B_t, \\
\ud\langle L, L\rangle_t=&\left(\sum_{1\le \ell\le M}\Im\frac{2\gamma_{\ell}}{g_t(u_{\ell})-W_t}\right)^2\ud t. 
\end{align*}
Therefore,
\begin{align}\label{eqn::multiple_log_SDE_aux2}
\ud \left(\frac{M_t(\LZgff^{(\bs{\gamma})})}{M_t(\LZgff)}\right)=\frac{M_t(\LZgff^{(\bs{\gamma})})}{M_t(\LZgff)}\left(\ud L_t+\frac{1}{2}\ud\langle L, L\rangle_t\right)=\frac{M_t(\LZgff^{(\bs{\gamma})})}{M_t(\LZgff)}\left(\sum_{1\le\ell\le M}\Im\frac{2\gamma_\ell}{g_t(u_\ell)- W_t}\right)\ud B_t.
\end{align}
This implies that $\{M_t(\LZgff^{(\bs{\gamma})})/M_t(\LZgff)\}_{t\ge 0}$ is a local martingale under $\PP(\LZ_{\GFF})$. 
Combining~\eqref{eqn::GFF_alternating_SDE} and~\eqref{eqn::multiple_log_SDE_aux2}, we obtain~\eqref{eqn::multiple_log_SDE} due to Girsanov's theorem.
\end{proof}

\begin{lemma}\label{lem::GFFlog_levelline_coupling}
Assume the same notation as in Theorem~\ref{thm::multiplelog_polygon} and assume $\Omega=\HH$ and $x_1<\cdots<x_{2N}$. 
For $j\in\{1, \ldots, N\}$, the level line of 
$\Gamma+\sum_{\ell=1}^M \gamma_{\ell} \Green(\HH; u_{\ell}, \cdot)$
starting from $x_{2j-1}$ has the same law as $\SLE_4$ in $(\HH; x_{2j-1}, \infty)$ weighted by the local martingale~\eqref{eqn::multiple_log_mart}, up to $\sigma$ which is the first time any one of $\{x_{2j-2}, x_{2j}, u_1, \ldots, u_M\}$ is disconnected from $\infty$. 
\end{lemma}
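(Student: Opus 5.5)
We use the standard characterization of GFF level lines (Schramm--Sheffield, Miller--Sheffield, Wang--Wu): a random curve $\eta$ from $x_{2j-1}$ coupled with a field $h$ is its level line up to a stopping time iff, for every stopping time $\tau$ before that time, the conditional law of $h$ given $\eta_{[0,\tau]}$ is the GFF in $\HH\setminus\eta_{[0,\tau]}$. The boundary data of this conditional GFF are the original ones on $\partial\HH$, $\pi$ on the right side of $\eta_{[0,\tau]}$ and $0$ on its left side, plus the singular part $\sum_\ell\gamma_\ell\Green(\HH\setminus\eta_{[0,\tau]};u_\ell,\cdot)$. Equivalently, the level line is the unique curve for which the following mean-value process is a martingale for every fixed $z$ (and, at the level of the whole field, the variances decrease according to Hadamard's formula):
\[
h_t(z):=\Gamma_t^0(z)+\sum_{\ell}\gamma_\ell\Green\bigl(\HH;g_t(u_\ell),g_t(z)\bigr).
\]
Here $\Gamma_t^0$ is as in the proof of Lemma~\ref{lem::multiple_log_mart}, and the equality holds because $\Green$ is conformally invariant. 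So the plan is to check this martingale property for the curve whose driving function solves~\eqref{eqn::multiple_log_SDE}, and then invoke uniqueness of the coupling.

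\textbf{Step 1 (existence of the coupling).} Let $\eta$ be driven by~\eqref{eqn::multiple_log_SDE}. By Lemma~\ref{lem::multiple_log_mart}, this is $\SLE_4$ weighted by~\eqref{eqn::multiple_log_mart}, and the SDE is well posed up to $\sigma$: before $\sigma$ none of $g_t(u_\ell)-W_t$ or $g_t(x_{2j-2})-W_t$, $g_t(x_{2j})-W_t$ vanish, so the drifts are locally Lipschitz. Substituting the SDE into~\eqref{eqn::coupling_calculation_aux1} gives
\[
\ud\Gamma^0_t(z)=\Im\frac{1}{g_t(z)-W_t}\Bigl(2\ud B_t+\sum_\ell\Im\frac{4\gamma_\ell}{g_t(u_\ell)-W_t}\ud t\Bigr).
\]
By~\eqref{eqn::coupling_calculation_aux2} with $w=u_\ell$, the drift term equals
\[
-\sum_\ell\gamma_\ell\,\ud\Green\bigl(\HH;g_t(u_\ell),g_t(z)\bigr),
\]
so the drifts cancel and $\ud h_t(z)=2\Im\frac{1}{g_t(z)-W_t}\ud B_t$. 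The covariation is
\[
\ud\langle h(z),h(w)\rangle_t=4\,\Im\frac{1}{g_t(z)-W_t}\,\Im\frac{1}{g_t(w)-W_t}\ud t=-\ud\Green\bigl(\HH;g_t(z),g_t(w)\bigr).
\]
This is exactly the Hadamard-type variance decrease needed for the standard construction. Concretely, sample $\eta_{[0,\tau]}$, then sample an independent zero-boundary GFF in $\HH\setminus\eta_{[0,\tau]}$ and add $h_\tau$. Using the martingale property and the covariance identity, tested against smooth $f$ through characteristic functions $\E[\exp(\ii(h_t,f)-\tfrac12 \int\!\!\int f f\,\Green_t)]$, the resulting field has the law of $\Gamma+\sum_\ell\gamma_\ell\Green(\HH;u_\ell,\cdot)$. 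The singular term $\gamma_\ell\Green(\HH;u_\ell,\cdot)$ is deterministic and locally integrable, so testing against $f$ is fine. This gives a coupling in which $\eta$ is a local set with the required conditional boundary data up to every stopping time $\tau<\sigma$. We first localize to times before the curve comes within distance $\delta$ of the $u_\ell$, or of the force points, and then let $\delta\to0$.

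\textbf{Step 2 (identification with the level line).} In this coupling, near $\eta$ and away from the $u_\ell$, the conditional field has boundary values $0/\pi$ on the two sides of $\eta$ plus a function that is continuous up to $\eta$ and vanishes on it (the Green's function vanishes on the new boundary). Hence $\eta$ is a level line in the sense of Schramm--Sheffield. Uniqueness and determination of level lines by the field then imply that $\eta$ equals the level line of $\Gamma+\sum_\ell\gamma_\ell\Green(\HH;u_\ell,\cdot)$ up to $\sigma$. To apply uniqueness, we localize to a neighborhood $U$ of $x_{2j-1}$ avoiding the $u_\ell$. There the field is absolutely continuous with respect to a GFF with bounded harmonic perturbation, and the level line is a local function of the field. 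We then extend along stopping times by the domain Markov property, iterating up to $\sigma$.

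\textbf{Main obstacle.} The hardest step is this uniqueness/measurability step near $\sigma$, because the singular harmonic function is unbounded near the $u_\ell$. We handle it by stopping at the times $\sigma_\delta$ when $\eta$ reaches distance $\delta$ from $\{u_\ell\}$ or disconnects the neighbors $x_{2j-2},x_{2j}$. Before $\sigma_\delta$ the perturbation $\sum_\ell\gamma_\ell\Green$ is smooth and bounded near the curve, so the standard result for bounded boundary perturbations applies; we conclude by letting $\delta\to0$.
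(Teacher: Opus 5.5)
Your proposal is correct and follows essentially the paper's route. You use the same mean-value process $h_t(z)=\Gamma^0_t(z)+\sum_\ell\gamma_\ell\Green(\HH;g_t(u_\ell),g_t(z))$, and you check via \eqref{eqn::multiple_log_SDE}, \eqref{eqn::coupling_calculation_aux1} and \eqref{eqn::coupling_calculation_aux2} that its drift cancels and that $\ud\langle h(z),h(w)\rangle_t=-\ud\Green(\HH;g_t(z),g_t(w))$, then invoke the Miller--Sheffield coupling criterion. Testing with characteristic functions instead of the exponential martingale \eqref{eqn::mart_multi_test} is an inessential difference, and your Step~2 (localizing away from the $u_\ell$ and using Cameron--Martin absolute continuity) spells out an identification that the paper delegates to the cited references.
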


\begin{proof}
To simplify the notation, we assume $j=1$ and the other cases can be proved similarly.
Denote by $\PP(\LZ_{\GFF}^{(\bs{\gamma})})$ the law of $\SLE_4$ in $(\HH; x_{2j-1}, \infty)$ weighted by the local martingale~\eqref{eqn::multiple_log_mart} as in Lemma~\ref{lem::multiple_log_mart}. 
Define $\Gamma^0_t(z):=\Gamma_0(\HH; W_t,g_t(x_2),\ldots,g_t(x_{2N}); g_t(z))$ for every $z\in\HH\setminus \eta[0,t]$ and $0\le t<\sigma$ and define 
\begin{equation*}
N_t(z):=\Gamma_t^0(z)+\sum_{1\le\ell\le M}\gamma_\ell \Green(\HH;g_t(z),g_t(u_\ell)).
\end{equation*} 
For every smooth function $f$ with compact support in $\HH$, define
\begin{equation}\label{eqn::mart_multi_test}
M_t:=\exp\left(\int_\HH N_t(z)f(z)\ud z+\frac{1}{2}\iint_{\HH\times\HH} \Green(\HH; g_t(z),g_t(w))f(z)f(w) \ud z\ud w\right).
\end{equation}
By the same arguments in~\cite[Proof of Theorem~1.1]{MillerSheffieldIG1} (see also~\cite[Proposition~3.1]{MillerSheffieldIG4} and~\cite[Proposition~2.2.7]{WangWuLevellinesGFFI}), to obtain the coupling $(\Gamma+\sum_{\ell=1}^M \gamma_{\ell} \Green(\HH; u_{\ell}, \cdot), \eta)$ in the statement, it suffices to prove that $M_t$ in~\eqref{eqn::mart_multi_test} is a local martingale under $\PP(\LZ_{\GFF}^{(\bs{\gamma})})$. 
This is true due to the following two facts. 
\begin{itemize}
\item For every $z\in\HH$, from~\eqref{eqn::multiple_log_SDE}, \eqref{eqn::coupling_calculation_aux1} and~\eqref{eqn::coupling_calculation_aux2}, we obtain
\begin{align}\label{eqn::coupling_calculation_aux5}
\ud N_t(z)=\Im\frac{2}{g_t(z)-W_t}\ud B_t.
\end{align}
Thus, $\{N_t(z)\}_{t\ge 0}$ is a local martingale under $\PP(\LZ_{\GFF}^{(\bs{\gamma})})$. 
\item For distinct points $z,w\in\HH$, from~\eqref{eqn::coupling_calculation_aux2} and~\eqref{eqn::coupling_calculation_aux5}, 
\begin{equation*}
\ud\langle N_t(z), N_t(w)\rangle+\ud \Green(\HH; g_t(z),g_t(w))=0.
\end{equation*}
\end{itemize} 
\end{proof}

\subsection{Cascade relation and proof of Theorem~\ref{thm::multiplelog_polygon}}
\label{subsec::GFF_cascade}
\begin{lemma}\label{lem::cascade_singlecurve}
Assume the same notation as in Theorem~\ref{thm::multiplelog_polygon} and assume $\Omega=\HH$ and $x_1<\cdots<x_{2N}$. 
We denote $\bs{x}=(x_1, \ldots, x_{2N})$ and $\bs{u}=(u_1, \ldots, u_M)$. 
Let $\eta$ be the level line of $\Gamma+\sum_{\ell=1}^M\gamma_{\ell}\Green(\HH; u_{\ell}, \cdot)$ starting from $x_1$ and denote by $\sigma$ the first time $\eta$ disconnects $x_2$ from $\infty$. Denote by $H_{\eta}^R$ (resp. $H_{\eta}^L$) the connected component of $\HH\setminus\eta$ that is to the right of $\eta$ (resp. to the left of $\eta$). 
Denote $\bs{u}^R=\{u_1, \ldots, u_M\}\cap H_{\eta}^R$ and $\bs{u}^L=\{u_1, \ldots, u_M\}\cap H_{\eta}^L$ and let $\bs{\gamma}^R$ be the collection of $\gamma_i$ such that $u_i\in \bs{u}^R$ and $\bs{\gamma}^L$ be the collection of $\gamma_i$ such that $u_i\in \bs{u}^L$.
Then the law of $\eta$ under $\PP_{\bs{\gamma}}$ is absolutely continuous with respect to its law under $\PP=\PP_{\bs{0}}$ and the Radon-Nikodym derivative is given by 
\begin{align}\label{eqn::cascade_singlecurve_RN}
\frac{\LZ_{\GFF}(\HH; \bs{x})}{\LZ_{\GFF}^{(\bs{\gamma})}(\HH; \bs{x}; \bs{u})}\sum_{j=1}^N\one\{\eta(\sigma)=x_{2j}\}\frac{\LZ_{\GFF}^{(\bs{\gamma}^R)}(H_{\eta}^R; x_3, \ldots, x_{2j-1}, x_2; \bs{u}^R)}{\LZ_{\GFF}(H_{\eta}^R; x_3, \ldots, x_{2j-1}, x_2)}\frac{\LZ_{\GFF}^{(\bs{\gamma}^L)}(H_{\eta}^L; x_{2j+1}, \ldots, x_{2N}; \bs{u}^L)}{\LZ_{\GFF}(H_{\eta}^L; x_{2j+1}, \ldots, x_{2N})}. 
\end{align}
\end{lemma}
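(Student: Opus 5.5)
The plan is to compare the two Loewner evolutions through the local martingale of Lemma~\ref{lem::multiple_log_mart}. By Lemma~\ref{lem::GFFlog_levelline_coupling}, up to the stopping time (the first time one of $x_2$, $x_{2N}$ or some $u_\ell$ is disconnected from $\infty$), the level line under $\PP_{\bs{\gamma}}$ is $\SLE_4$ in $(\HH;x_1,\infty)$ weighted by $M_t(\LZgff^{(\bs{\gamma})})$. Under $\PP$ it is weighted by $M_t(\LZgff)$. Hence, for $t$ before these times, the Radon--Nikodym derivative of $\PP_{\bs{\gamma}}$ with respect to $\PP$ on $\eta[0,t]$ is
\[
R_t:=\frac{M_t(\LZgff^{(\bs{\gamma})})}{M_t(\LZgff)}\cdot\frac{\LZgff(\HH;\bs{x})}{\LZgff^{(\bs{\gamma})}(\HH;\bs{x};\bs{u})}=\exp(L_t-L_0),
\]
with $L_t$ as in~\eqref{eqn::multiple_log_SDE_aux1}. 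By conformal covariance this is
\[
\frac{\exp\bigl(\LE_{\bs{\gamma}}(\HH\setminus\eta[0,t];\bs{u})+\sum_\ell\gamma_\ell\Gamma^0_t(u_\ell)\bigr)}{\exp\bigl(\LE_{\bs{\gamma}}(\HH;\bs{u})+\sum_\ell\gamma_\ell\Gamma_0(u_\ell)\bigr)}.
\]

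The first step is localization. Stop at $\tau_\eps$, the first time $\eta$ comes within $\eps$ of some $u_\ell$ or of $\{x_2,\dots,x_{2N}\}$. Up to $\tau_\eps$, the quantities $\Green$, $\log\CR$ and $\Gamma^0_t$ at the $u_\ell$ are bounded, so $R_{t\wedge\tau_\eps}$ is a bounded martingale. This gives absolute continuity of $\eta[0,\tau_\eps]$ with density $R_{\tau_\eps}$. Letting $\eps\to0$ extends the comparison along the whole curve up to $\sigma$, since under both laws $\eta$ a.s.\ avoids the $u_\ell$.

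The second step is to identify the limit $R_\sigma$; I expect this to be the main obstacle. As $t\to\sigma$, with $\eta(\sigma)=x_{2j}$ (the only possible endpoints, by the GFF level line theory for the $\bs{\gamma}=0$ case), the domain $\HH\setminus\eta[0,t]$ degenerates into $H^R_\eta\cup H^L_\eta$. Several limits must be checked:
\begin{itemize}
\item the Green's function between points on opposite sides tends to $0$, matching the footnote convention in~\eqref{eqn::reducedGreenenergy_def};
\item the Green's function and conformal radius on each side converge to those of the component;
\item $\Gamma^0_t(u_\ell)$ converges to the harmonic function with the alternating data in $(H^R_\eta;x_3,\dots,x_{2j-1},x_2)$, respectively $(H^L_\eta;x_{2j+1},\dots,x_{2N})$, with the level line itself carrying value $\pi$ on the right side and $0$ on the left.
\end{itemize}
These limits follow from Carathéodory-type convergence of domains together with continuity of harmonic measure. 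The $\Gamma^0_t$ limit needs care because the boundary data jumps at the tip. Once they hold, the exponential factors regroup as
\[
\frac{\LZgff^{(\bs{\gamma}^R)}(H^R_\eta;\dots)}{\LZgff(H^R_\eta;\dots)}\cdot\frac{\LZgff^{(\bs{\gamma}^L)}(H^L_\eta;\dots)}{\LZgff(H^L_\eta;\dots)},
\]
because each of these ratios equals $\exp(\LE+\sum\gamma\Gamma_0)$ on its component. This yields~\eqref{eqn::cascade_singlecurve_RN}.

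The third step is uniform integrability, so that the martingale limit really is the density; I would try this next. One route avoids it: show $\E[R_\sigma]=1$ directly. Conditionally on $\eta$, the field in each component is a GFF with the appropriate boundary data plus $\sum\gamma_\ell\Green(\HH;u_\ell,\cdot)$, whose harmonic part restricted to a component equals $\sum\gamma_\ell\Green(\text{component};u_\ell,\cdot)$ plus a harmonic correction. An alternative bounds $R_t$ uniformly when the $\gamma_\ell$ are small and extends by analyticity in $\bs{\gamma}$. Failing either, I would use that $R_t\le C\exp(\tfrac{\gamma^2}{2}\log\CR)$ is bounded above when all $\gamma_\ell$ terms have favourable sign. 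I would fall back on the GFF Markov property: the law of the level line pair under $\PP_{\bs{\gamma}}$ is determined by the coupling, and the Girsanov density is the Gaussian Cameron--Martin density $\exp\bigl((\Gamma,h)_\nabla-\tfrac12\|h\|^2_\nabla\bigr)$ conditioned on $\eta$. Its conditional expectation given $\eta$ computes exactly the reduced Green's energy ratio, after regularizing $h=\sum\gamma_\ell\Green(\HH;u_\ell,\cdot)$ near $u_\ell$.
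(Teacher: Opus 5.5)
Your overall scheme is the paper's. You compare the two weightings of $\SLE_4$ through $R_t=\exp(L_t-L_0)$, rewrite it by conformal covariance as a ratio of $\exp(\LE_{\bs\gamma}+\sum_\ell\gamma_\ell\Gamma_0)$ for $\HH\setminus\eta[0,t]$ and for $\HH$, and let $t\to\sigma$. Your Step 2 (the limits of $\Green$, $\CR$ and $\Gamma^0_t$ at $\sigma$) also matches the paper.

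However, the step you yourself flag as decisive is left open: that the martingale limit $R_\sigma$ really is the density. Your localization at $\tau_\eps$ only identifies the density of $\eta[0,\tau_\eps]$ for each fixed $\eps$. Passing $\eps\to0$ needs uniform integrability of $\{R_{\tau_\eps}\}$, and you do not establish it. Instead you list alternatives without carrying any out: $\E[R_\sigma]=1$ via the Markov property, analyticity in small $\bs\gamma$, or a Cameron--Martin density. The last is not directly available, since $\sum_\ell\gamma_\ell\Green(\HH;u_\ell,\cdot)$ has infinite Dirichlet energy. Your one concrete bound is made conditional on a ``favourable sign'', which would prove the lemma only for a restricted set of $\bs\gamma$, whereas it is stated for all $\bs\gamma\in\R^M$.

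The missing observation is that no sign condition, localization or separate uniform-integrability argument is needed. For every $\bs\gamma\in\R^M$ and every $t<\sigma$, monotonicity under shrinking the domain gives
\[
0\le\Green(\HH\setminus\eta[0,t];u_i,u_k)\le\Green(\HH;u_i,u_k),\qquad \CR(\HH\setminus\eta[0,t];u_\ell)\le\CR(\HH;u_\ell),\qquad 0\le\Gamma^0_t(u_\ell)\le\pi.
\]
So the factors $\exp(\gamma_i\gamma_k\Green)$ and $\exp(\gamma_\ell\Gamma^0_t)$ are bounded above and below whatever the signs. The conformal radius enters only through $\CR^{\gamma_\ell^2/2}$ with $\gamma_\ell^2/2\ge0$, so it is bounded above.

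Hence $R_{t\wedge\sigma}$ is a bounded local martingale under $\PP$, so a true martingale. It converges almost surely and in $L^1$ to $R_\sigma$, and $\E[R_\sigma]=1$ is automatic. This is exactly how the paper concludes. It also makes your separate claim that $\eta$ avoids the $u_\ell$ under $\PP_{\bs\gamma}$ unnecessary. Since the density integrates to $1$ over curves that, under $\PP$, reach some $x_{2j}$ without swallowing any $u_\ell$ earlier, $\PP_{\bs\gamma}$ gives full mass to such curves.
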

\begin{proof}
The law of $\eta$ under $\PP$ is the same as $\SLE_4$ in $(\HH; x_1, \infty)$ weighted by the local martingale $\{M_t(\LZ_{\GFF})\}_t$. 
The law of $\eta$ under $\PP_{\bs{\gamma}}$ is the same as $\SLE_4$ in $(\HH; x_1, \infty)$ weighted by the local martingale $\{M_t(\LZ_{\GFF}^{(\bs{\gamma})})\}_t$, due to the proof of Lemmas~\ref{lem::multiple_log_mart} and~\ref{lem::GFFlog_levelline_coupling}.  
Thus, the law of $\eta$ under $\PP_{\bs{\gamma}}$ is the same as its law under $\PP$ weighted by the local martingale
\begin{align*}
N_t:=&\frac{M_t(\LZ_{\GFF}^{(\bs{\gamma})})}{M_t(\LZ_{\GFF})}
=\prod_{1\le i<k\le M}\exp(\gamma_i\gamma_k\Green(\HH; g_t(u_i), g_t(u_k)))\times \prod_{\ell=1}^M \left(\frac{\CR(\HH; g_t(u_{\ell}))}{|g_t'(u_{\ell})|}\right)^{\frac{\gamma_{\ell}^2}{2}}\exp(\gamma_{\ell}\Gamma_0(\HH; \bs{X}_t; g_t(u_{\ell}))), 
\end{align*}
up to $\sigma$. 
From the conformal covariance of conformal radius~\eqref{eqn::CR_cov}, and the conformal invariance of Green's function~\eqref{eqn::Green_inv} and of the harmonic function $\Gamma_0$, the local martingale $N_t$ can also be written as 
\begin{align}\label{eqn::cascade_singlecurve_aux1}
N_t=&\prod_{1\le i<k\le M}\exp(\gamma_i\gamma_k\Green(\HH\setminus\eta_{[0,t]}; u_i, u_k))\times \prod_{\ell=1}^M \CR(\HH\setminus\eta_{[0,t]}; u_{\ell})^{\frac{\gamma_{\ell}^2}{2}}\exp(\gamma_{\ell}\Gamma_0(\HH\setminus\eta_{[0,t]}; \bs{x}; u_{\ell})).
\end{align}

The level line $\eta$ under $\PP$ is continuous simple curve up to and including $\sigma$ and $\eta_{\sigma}\in \{x_2, x_4, \ldots, x_{2N}\}$. Let us consider the limit of $N_t$ as $t\to \sigma$. 
We have, almost surely under $\PP$, 
\begin{align*}
&\lim_{t\to\sigma}\Green(\HH\setminus\eta_{[0,t]}; u_i, u_k)=\Green(\HH\setminus\eta; u_i, u_k), \qquad && 0\le \Green(\HH\setminus\eta_{[0,t]}; u_i, u_k)\le \Green(\HH; u_i, u_k) \text{ for all }0\le t\le\sigma; \\
&\lim_{t\to\sigma}\CR(\HH\setminus\eta_{[0,t]}; u_{\ell})=\CR(\HH\setminus\eta; u_{\ell}), && 0\le \CR(\HH\setminus\eta_{[0,t]}; u_{\ell})\le \CR(\HH; u_{\ell})\text{ for all }0\le t\le\sigma; \\
&\lim_{t\to\sigma}\Gamma_0(\HH\setminus\eta_{[0,t]}; \bs{x}; u_{\ell})=\Gamma_0(\HH\setminus\eta; \bs{x}; u_{\ell}), && 0\le \Gamma_0(\HH\setminus\eta_{[0,t]}; \bs{x}; u_{\ell})\le \pi\text{ for all }0\le t\le\sigma. 
\end{align*}
Therefore, $\{N_{t\wedge\sigma}\}_t$ is a bounded martingale under $\PP$ and 
\begin{align}\label{eqn::cascade_singlecurve_aux2}
N_{\sigma}=\lim_{t\to\sigma}N_t=&\prod_{1\le i<k\le M}\exp(\gamma_i\gamma_k\Green(\HH\setminus\eta; u_i, u_k))\times \prod_{\ell=1}^M \CR(\HH\setminus\eta; u_{\ell})^{\frac{\gamma_{\ell}^2}{2}}\exp(\gamma_{\ell}\Gamma_0(\HH\setminus\eta; \bs{x}; u_{\ell})).
\end{align}
If $u_i, u_k$ are disconnected by $\eta$, then $\Green(\HH\setminus\eta; u_i, u_k)=0$. Thus, the RHS of~\eqref{eqn::cascade_singlecurve_aux2} is the same as the RHS of~\eqref{eqn::cascade_singlecurve_RN} as desired. 
\end{proof}

\begin{proof}[Proof of Theorem~\ref{thm::multiplelog_polygon}]
We prove the conclusion by induction on $N$. 
When $N=1$, assume the same notation as in Lemma~\ref{lem::cascade_singlecurve}. 
The level line $\eta$ starts from $x_1$ and terminates at $x_2$ at time $\sigma$. 
Lemma~\ref{lem::cascade_singlecurve} tells that the law of $\eta$ under $\PP_{\bs{\gamma}}$ is absolutely continuous with respect to its law under $\PP=\PP_{\bs{0}}$ and the Radon-Nikodym derivative is given by 
\begin{align*}
&\frac{\LZ_{\GFF}(\Omega; x_1, x_2)}{\LZ_{\GFF}^{(\bs{\gamma})}(\Omega; x_1, x_2; \bs{u})}\LZ_{\GFF}^{(\bs{\gamma}^R)}(H_{\eta}^R; \emptyset; \bs{u}^R)\LZ_{\GFF}^{(\bs{\gamma}^L)}(H_{\eta}^L; \emptyset; \bs{u}^L)\\
=&\prod_{1\le i<k\le M}\frac{\exp(\gamma_i\gamma_k \Green(\Omega\setminus\eta; u_i, u_k))}{\exp(\gamma_i\gamma_k \Green(\Omega; u_i, u_k))}\times\prod_{\ell=1}^M\left(\frac{\CR(\Omega\setminus\eta; u_{\ell})}{\CR(\Omega; u_{\ell})}\right)^{\frac{\gamma_{\ell}^2}{2}}\frac{\left(\one\{u_{\ell}\in H_{\eta}^R\}\exp\left(\pi\gamma_{\ell}\right)+\one\{u_{\ell}\in H_{\eta}^L\}\right)}{\exp(\gamma_{\ell}\Gamma_0(\Omega; x_1, x_2; u_{\ell}))},
\end{align*}
as desired. 
\medbreak

Assume the conclusion holds up to $N-1$, let us consider the case for $N$. 
Assume the same notation as in Lemma~\ref{lem::cascade_singlecurve}. 
Let $\eta$ be the level line starting from $x_1$. 
Denote by $\bs{\eta}^R$ (resp. $\bs{\eta}^L$) the collection of level lines in $H_{\eta}^R$ (resp. in $H_{\eta}^L$). 
Note that $\bs{\eta}=\eta\cup \bs{\eta}^R\cup \bs{\eta}^L$. 
On the event $\{\eta(\sigma)=x_{2j}\}$ for $1\le j\le N$, we have the following observations.
\begin{itemize}
\item From Lemma~\ref{lem::cascade_singlecurve}, the law of $\eta$ under $\PP_{\bs{\gamma}}$ is the same as its law under $\PP=\PP_{\bs{0}}$ weighted by 
\[\frac{\LZ_{\GFF}(\Omega; \bs{x})}{\LZ_{\GFF}^{(\bs{\gamma})}(\Omega; \bs{x}; \bs{u})}\frac{\LZ_{\GFF}^{(\bs{\gamma}^R)}(H_{\eta}^R; x_3, \ldots, x_{2j-1}, x_2; \bs{u}^R)}{\LZ_{\GFF}(H_{\eta}^R; x_3, \ldots, x_{2j-1}, x_2)}\frac{\LZ_{\GFF}^{(\bs{\gamma}^L)}(H_{\eta}^L; x_{2j+1}, \ldots, x_{2N}; \bs{u}^L)}{\LZ_{\GFF}(H_{\eta}^L; x_{2j+1}, \ldots, x_{2N})}. 
\]
\item From the domain Markov property of the GFF and the induction hypothesis, given $\eta$, the conditional law of $\bs{\eta}^L$ under $\PP_{\bs{\gamma}}$ is the same as its law under $\PP$ weighted by 
\begin{align*}
&\prod_{\substack{i<k\\ u_i, u_k\in\bs{u}^L}}\frac{\exp(\gamma_i\gamma_k\Green(\Omega\setminus\bs{\eta}; u_i, u_k))}{\exp(\gamma_i\gamma_k\Green(H_{\eta}^L; u_i, u_k))}\times\prod_{\ell: u_{\ell}\in \bs{u}^L}\left(\frac{\CR(\Omega\setminus\bs{\eta}; u_{\ell})}{\CR(H_{\eta}^L; u_{\ell})}\right)^{\frac{\gamma_{\ell}^2}{2}}\frac{\left(\one\{u_{\ell}\in \Omega_{\bs{\eta}}^+\}\exp(\pi\gamma_{\ell})+\one\{u_{\ell}\in \Omega_{\bs{\eta}}^-\}\right)}{\exp(\gamma_{\ell}\Gamma_0(H_{\eta}^L; x_{2j+1}, \ldots, x_{2N}; u_{\ell}))}. 
\end{align*}
\item From the domain Markov property of the GFF and the induction hypothesis, given $\eta$, the conditional law of $\bs{\eta}^R$ is the same as the law of $\bs{\gamma}^R$ weighted by 
\begin{align*}
&\prod_{\substack{i<k\\ u_i, u_k\in\bs{u}^R}}\frac{\exp(\gamma_i\gamma_k\Green(\Omega\setminus\bs{\eta}; u_i, u_k))}{\exp(\gamma_i\gamma_k\Green(H_{\eta}^R; u_i, u_k))}\times\prod_{\ell: u_{\ell}\in \bs{u}^R}\left(\frac{\CR(\Omega\setminus\bs{\eta}; u_{\ell})}{\CR(H_{\eta}^R; u_{\ell})}\right)^{\frac{\gamma_{\ell}^2}{2}}\frac{\left(\one\{u_{\ell}\in \Omega_{\bs{\eta}}^+\}\exp(\pi\gamma_{\ell})+\one\{u_{\ell}\in \Omega_{\bs{\eta}}^-\}\right)}{\exp(\gamma_{\ell}\Gamma_0(H_{\eta}^R; x_3, \ldots, x_{2j-1}, x_2; u_{\ell}))}. 
\end{align*}
\end{itemize}
Combining the three Radon-Nikodym derivatives, the law of $\bs{\eta}$ under $\PP_{\bs{\gamma}}$ on the event $\{\eta(\sigma)=x_{2j}\}$ is the same as its law under $\PP$ on the same event weighted by~\eqref{eqn::multiplelog_polygon_RN}. This completes the proof. 
\end{proof}

\subsection{Proof of Proposition~\ref{prop::onelog_CR_fourier}}
\label{subsec::onelog_CR_fourier}
\begin{lemma}
Assume the same notation as in Theorem~\ref{thm::multiplelog_polygon}  with $M=1$.
Fix $u_1=u\in\Omega$ and $\gamma_1=\gamma\in\R$. We have
\begin{align}\label{eqn::onelog_CR_negexponent_aux}
\begin{split}
\E\left[\left(\frac{\CR(\Omega\setminus\bs{\eta}; u)}{\CR(\Omega; u)}\right)^{\frac{\gamma^2}{2}}\one\{u\in\Omega_{\bs{\eta}}^+\}\right]
=&\frac{\sinh\left(\gamma\Gamma_0(u)\right)}{\sinh(\pi\gamma)}. 
\end{split}
\end{align}
Moreover, 
\begin{align}\label{eqn::onelog_proba}
\PP_{\gamma}\left[u\in\Omega_{\bs{\eta}}^+\right]=\frac{1-\exp(-2\gamma\Gamma_0(u))}{1-\exp(-2\pi\gamma)}, \quad\text{for }\gamma\neq 0; \qquad \PP\left[u\in\Omega_{\bs{\eta}}^+\right]=\frac{\Gamma_0(u)}{\pi}, \quad\text{for }\gamma=0. 
\end{align}

\end{lemma}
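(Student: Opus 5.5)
Both identities follow from Theorem~\ref{thm::multiplelog_polygon} with $M=1$, using two facts: $\PP_{\gamma}$ and $\PP_{-\gamma}$ are probability measures, and the Radon--Nikodym derivative~\eqref{eqn::multiplelog_polygon_RN} integrates to one under $\PP$. For $M=1$ the reduced Green's energy is $\LE_{\gamma}(D;u)=\frac{\gamma^2}{2}\log\CR(D;u)$, so the derivative becomes
\begin{align*}
R_\gamma:=\left(\frac{\CR(\Omega\setminus\bs{\eta}; u)}{\CR(\Omega; u)}\right)^{\frac{\gamma^2}{2}}\ee^{-\gamma\Gamma_0(u)}\left(\one\{u\in\Omega_{\bs{\eta}}^+\}\ee^{\pi\gamma}+\one\{u\in\Omega_{\bs{\eta}}^-\}\right).
\end{align*}
We use that $\bs{\eta}$ is almost surely a union of $N$ simple curves not passing through the fixed interior point $u$. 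Hence $u$ lies in exactly one of $\Omega^+_{\bs{\eta}}$ and $\Omega^-_{\bs{\eta}}$, and the two indicators sum to one almost surely.

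Set $A:=\E[(\CR(\Omega\setminus\bs{\eta};u)/\CR(\Omega;u))^{\gamma^2/2}\one\{u\in\Omega^+_{\bs{\eta}}\}]$ and let $B$ be the same expectation with $\Omega^-_{\bs{\eta}}$. Both are finite, since $\CR$ is monotone and the ratio is at most $1$. The condition $\E[R_\gamma]=1$ reads $\ee^{\pi\gamma}A+B=\ee^{\gamma\Gamma_0(u)}$. The exponent $\gamma^2/2$ is even in $\gamma$, so applying the same identity to $-\gamma$ gives $\ee^{-\pi\gamma}A+B=\ee^{-\gamma\Gamma_0(u)}$. Subtracting the two equations yields $2\sinh(\pi\gamma)A=2\sinh(\gamma\Gamma_0(u))$. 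This is~\eqref{eqn::onelog_CR_negexponent_aux} for $\gamma\neq0$.

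For $\gamma=0$ the right-hand side of~\eqref{eqn::onelog_CR_negexponent_aux} is read as its limit $\Gamma_0(u)/\pi$, and the left-hand side becomes $\PP[u\in\Omega^+_{\bs{\eta}}]$. We get this by letting $\gamma\to0$ in the identity just proved. This is legitimate by dominated convergence, because the integrand is bounded by $1$ and converges pointwise to $\one\{u\in\Omega^+_{\bs{\eta}}\}$. This gives the $\gamma=0$ case of~\eqref{eqn::onelog_proba}.

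For the $\gamma\neq 0$ case of~\eqref{eqn::onelog_proba} we compute directly:
\begin{align*}
\PP_\gamma[u\in\Omega^+_{\bs{\eta}}]=\E[R_\gamma\one\{u\in\Omega^+_{\bs{\eta}}\}]=\ee^{\pi\gamma-\gamma\Gamma_0(u)}A=\ee^{\pi\gamma-\gamma\Gamma_0(u)}\frac{\sinh(\gamma\Gamma_0(u))}{\sinh(\pi\gamma)}.
\end{align*}
Multiplying numerator and denominator by $2\ee^{-\pi\gamma-\gamma\Gamma_0(u)}$ turns this into the stated ratio $(1-\ee^{-2\gamma\Gamma_0(u)})/(1-\ee^{-2\pi\gamma})$.

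The only step that is not pure algebra is justifying $\E[R_{\pm\gamma}]=1$, that is, that Theorem~\ref{thm::multiplelog_polygon} gives genuine absolute continuity with no mass lost. The proof of the theorem already supplies this: the martingales $N_{t\wedge\sigma}$ in Lemma~\ref{lem::cascade_singlecurve} are bounded, so the cascade produces true probability densities. We also need the almost sure dichotomy $u\in\Omega^+_{\bs{\eta}}\sqcup\Omega^-_{\bs{\eta}}$, which holds because the curves are simple and $\PP[u\in\bs{\eta}]=0$.
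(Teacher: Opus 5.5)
Your proposal is correct and follows the paper's proof. You take the $\PP$-expectation of the Radon--Nikodym derivative from Theorem~\ref{thm::multiplelog_polygon} with $M=1$, use the invariance of the exponent $\gamma^2/2$ under $\gamma\mapsto-\gamma$ to get a $2\times 2$ linear system, solve it for the $\Omega_{\bs{\eta}}^+$ term, and integrate the derivative against $\one\{u\in\Omega_{\bs{\eta}}^+\}$ to obtain $\PP_{\gamma}[u\in\Omega_{\bs{\eta}}^+]$; your dominated-convergence treatment of $\gamma=0$ (using $u\notin\bs{\eta}$ almost surely) makes explicit a case the paper states without comment.
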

\begin{proof}
The law of $\bs{\eta}$ under $\PP_{\gamma}$ is the same as its law under $\PP$ weighted by the Radon-Nikodym derivative given by 
\begin{align*}
\left(\frac{\CR(\Omega\setminus\bs{\eta}; u)}{\CR(\Omega; u)}\right)^{\frac{\gamma^2}{2}}\frac{\left(\one\{u\in\Omega_{\bs{\eta}}^+\}\exp\left(\pi\gamma\right)+\one\{u\in\Omega_{\bs{\eta}}^-\}\right)}{\exp(\gamma\Gamma_0(u))}. 
\end{align*}
Taking the expectation of the Radon-Nikodym derivative, we have
\begin{align}\label{eqn::onelog_cr_aux1}
\begin{split}
&\E\left[\left(\frac{\CR(\Omega\setminus\bs{\eta}; u)}{\CR(\Omega; u)}\right)^{\frac{\gamma^2}{2}}\one\{u\in\Omega_{\bs{\eta}}^+\}\right]\exp\left(\pi\gamma\right)
+\E\left[\left(\frac{\CR(\Omega\setminus\bs{\eta}; u)}{\CR(\Omega; u)}\right)^{\frac{\gamma^2}{2}}\one\{u\in\Omega_{\bs{\eta}}^-\}\right]
=\exp(\gamma\Gamma_0(u)). 
\end{split}
\end{align}
Replacing $\gamma$ by $-\gamma$, we have
\begin{align}\label{eqn::onelog_cr_aux2}
\begin{split}
&\E\left[\left(\frac{\CR(\Omega\setminus\bs{\eta}; u)}{\CR(\Omega; u)}\right)^{\frac{\gamma^2}{2}}\one\{u\in\Omega_{\bs{\eta}}^+\}\right]\exp\left(-\pi\gamma\right)
+\E\left[\left(\frac{\CR(\Omega\setminus\bs{\eta}; u)}{\CR(\Omega; u)}\right)^{\frac{\gamma^2}{2}}\one\{u\in\Omega_{\bs{\eta}}^-\}\right]
=\exp(-\gamma\Gamma_0(u)). 
\end{split}
\end{align}
Solving the system~\eqref{eqn::onelog_cr_aux1}-\eqref{eqn::onelog_cr_aux2}, we obtain~\eqref{eqn::onelog_CR_negexponent_aux} as desired. 

Furthermore, we have
\begin{align*}
\PP_{\gamma}\left[u\in\Omega_{\bs{\eta}}^+\right]=&\E\left[\left(\frac{\CR(\Omega\setminus\bs{\eta}; u)}{\CR(\Omega; u)}\right)^{\frac{\gamma^2}{2}}\frac{\one\{u\in\Omega_{\bs{\eta}}^+\}\exp\left(\pi\gamma\right)}{\exp(\gamma\Gamma_0(u))}\right]\\
=&\frac{\sinh\left(\gamma\Gamma_0(u)\right)}{\sinh(\pi\gamma)}\frac{\exp\left(\pi\gamma\right)}{\exp(\gamma\Gamma_0(u))},\tag{due to~\eqref{eqn::onelog_CR_negexponent_aux}}
\end{align*}
as desired in~\eqref{eqn::onelog_proba}. 
\end{proof}

\begin{corollary}\label{cor::onelog_extension}
Assume the same notation as in Theorem~\ref{thm::multiplelog_polygon}  with $M=1$.
Fix $u_1=u\in\Omega$. Then for all $\xi\in\C$ with $\Re{\xi}<1/2$, we have
\begin{align}\label{eqn::onelog_CR_fourier_aux1}
\E\left[\left(\frac{\CR(\Omega; u)}{\CR(\Omega\setminus\bs{\eta}; u)}\right)^{\xi}\one\{u\in\Omega_{\bs{\eta}}^+\}\right]
=&\frac{\sin\left(\Gamma_0(u)\sqrt{2\xi}\right)}{\sin(\pi\sqrt{2\xi})}. 
\end{align}
\end{corollary}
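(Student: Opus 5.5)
From \eqref{eqn::onelog_CR_negexponent_aux}, for every real $\gamma$,
\[
F(\xi):=\E\left[\left(\frac{\CR(\Omega; u)}{\CR(\Omega\setminus\bs{\eta}; u)}\right)^{\xi}\one\{u\in\Omega_{\bs{\eta}}^+\}\right]
\]
satisfies $F(-\gamma^2/2)=\sinh(\gamma\Gamma_0(u))/\sinh(\pi\gamma)$. Writing $\xi=-\gamma^2/2$, we have $\sqrt{2\xi}=\ii|\gamma|$. Using $\sin(\ii a)=\ii\sinh a$, this becomes $F(\xi)=\sin(\Gamma_0(u)\sqrt{2\xi})/\sin(\pi\sqrt{2\xi})$ for all real $\xi\le 0$. (For $\gamma=0$ both sides equal $\Gamma_0(u)/\pi$, consistent with \eqref{eqn::onelog_proba}.) The plan is to extend this identity from the half-line $(-\infty,0]$ to the half-plane $\{\Re\xi<1/2\}$ by analytic continuation.

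\textbf{Step 1: the right-hand side is analytic on $\{\Re\xi<1/2\}$.} The function $G(\xi)=\sin(\Gamma_0\sqrt{2\xi})/\sin(\pi\sqrt{2\xi})$ is even in $\sqrt{2\xi}$, so it is a meromorphic function of $\xi$ on all of $\C$. Its poles lie among the zeros of $\sin(\pi\sqrt{2\xi})$ other than $\xi=0$, i.e. at $\sqrt{2\xi}=k$ with $k\ge1$, that is $\xi=k^2/2$. The point $\xi=0$ is removable. Hence $G$ is holomorphic on $\{\Re\xi<1/2\}$.

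\textbf{Step 2: the left-hand side is analytic on the same half-plane.} Set $X=\log(\CR(\Omega;u)/\CR(\Omega\setminus\bs{\eta};u))\ge 0$. Then $F(\xi)=\E[e^{\xi X}\one\{u\in\Omega_{\bs\eta}^+\}]$ is a Laplace transform of a positive measure. By the standard theorem on Laplace transforms, it is holomorphic on the open half-plane $\{\Re\xi<a\}$, where $a$ is its abscissa of convergence, and $a\ge 0$ since $X\ge0$. So $F=G$ on $\{\Re\xi<\min(a,1/2)\}$ by the identity theorem, the two functions agreeing on the negative real axis.

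\textbf{Step 3: show $a\ge1/2$ (the main obstacle).} Here I use Landau's (Pringsheim's) theorem for Laplace transforms of positive measures: the real point $\xi=a$ is a singularity of $F$. Suppose $a<1/2$. Then $G$ is holomorphic in a neighbourhood of $a$, and it continues $F$ there. Alternatively one can argue monotonically: for real $\xi\uparrow a$, $F(\xi)=G(\xi)$ stays bounded by $G(a)<\infty$, so monotone convergence gives $F(a)<\infty$, and Landau's theorem then forces convergence slightly beyond $a$. Either way this contradicts the definition of $a$, so $a\ge 1/2$. One must check that $G$ is finite, indeed positive, on $[0,1/2)$. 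This holds because $\pi\sqrt{2\xi}<\pi$ there, so both sines are positive.

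Combining the three steps, $F=G$ on $\{\Re\xi<1/2\}$, which is \eqref{eqn::onelog_CR_fourier_aux1}.
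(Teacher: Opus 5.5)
Your argument is correct, but you obtain the analyticity of the left-hand side by a different mechanism than the paper.

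The paper imports an a priori bound $\E[(\CR(\Omega;u)/\CR(\Omega\setminus\bs{\eta};u))^{\beta}]\le C_\beta\LB(\Omega;\bs{x})$ for all real $\beta<1/2$. It gets this by combining the identification of the level lines with multiple $\SLE_4$ (Peltola--Wu) with the conformal-radius moment estimate of Feng--Wu. Finiteness of $F$ on $\{\Re\xi<1/2\}$ is then immediate. Analyticity follows from the domination $|g(\xi;\bs{\eta})|\le g(\beta;\bs{\eta})$ for $\Re\xi\le\beta$, and the identity theorem is applied as in your Steps 1--2.

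You instead extract the half-plane of convergence from the explicit formula itself. $F$ is the Laplace transform of a positive measure, so by Landau's theorem its real abscissa $a\ge 0$ is a singular point. This is impossible for $a<1/2$, because $G$ is holomorphic on $\C\setminus\{k^2/2:k\ge1\}$ and continues $F$ across $a$.

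What each route buys:
\begin{itemize}
\item Your proof is self-contained: nothing beyond \eqref{eqn::onelog_CR_negexponent_aux} is used. It also yields the moment bound, restricted to $\{u\in\Omega^+_{\bs{\eta}}\}$, as a by-product.
\item Since $\sin(\Gamma_0(u))\neq0$, $G$ has a genuine pole at $1/2$. So your argument even shows $a=1/2$ exactly, consistent with the blow-up remarked after \eqref{eqn::CR_fourier}.
\item The paper's route is shorter given the external SLE estimate, and that estimate also controls the expectation without the indicator.
\end{itemize}

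One minor point: your ``alternative'' monotone argument is loosely stated. Finiteness of $F(a)$ alone does not give convergence beyond $a$, since a Laplace transform may converge at its abscissa. What does give it is the analyticity of $G$ at $a$. For instance, expand $G$ in a Taylor series at $a$; by monotone convergence its coefficients are $\E[X^n\ee^{aX}\one\{u\in\Omega^+_{\bs{\eta}}\}]/n!\ge0$, and Tonelli then gives finiteness of $F$ slightly to the right of $a$. This is just the proof of Landau's theorem, so your primary argument already covers it.
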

\begin{proof}
Combining~\cite[Theorem~1.4]{PeltolaWuGlobalMultipleSLEs} and~\cite[Lemma~2.1]{FengWuRadialBPZFKIsing} with $\kappa=4$, for $\beta<1/2$, there exist a constant $C_{\beta}\in (0,\infty)$ depending only on $\beta$ and a function $\LB(\Omega; \bs{x})\in (0,\infty)$ depending only on the polygon $(\Omega; \bs{x})$ such that 
\begin{align}\label{eqn::onelog_CR_posexponent_aux1}
\E\left[\left(\frac{\CR(\Omega; u)}{\CR(\Omega\setminus\bs{\eta}; u)}\right)^{\beta}\right]\le C_{\beta}\LB(\Omega; \bs{x}), \qquad\text{for all }\beta<\frac{1}{2}. 
\end{align}

We define 
\begin{align*}
g(\xi; \bs{\eta}):=\left(\frac{\CR(\Omega; u)}{\CR(\Omega\setminus\bs{\eta}; u)}\right)^{\xi}, \qquad F(\xi):=\E[g(\xi; \bs{\eta})\one\{u\in\Omega_{\bs{\eta}}^+\}], \qquad\text{for }\xi\in\C\text{ with }\Re{\xi}<\frac{1}{2}. 
\end{align*}
The expectation $\E[g(\xi; \bs{\eta})\one\{u\in\Omega_{\bs{\eta}}^+\}]$ is finite due to~\eqref{eqn::onelog_CR_posexponent_aux1}. 
Eq.~\eqref{eqn::onelog_CR_fourier_aux1} follows from the following two observations.
\begin{itemize}
\item Eq.~\eqref{eqn::onelog_CR_fourier_aux1} holds for $\xi=\beta$ is real and $\beta\le 0$. Note that
\begin{align}\label{eqn::onelog_CR_posexponent_aux2}
F(\beta)=\frac{\sinh(\Gamma_0(u)\sqrt{-2\beta})}{\sinh(\pi\sqrt{-2\beta})} 
\end{align}
by setting $\gamma=\sqrt{-2\beta}$ in~\eqref{eqn::onelog_CR_negexponent_aux}. The RHS of~\eqref{eqn::onelog_CR_posexponent_aux2} is the same as the RHS of~\eqref{eqn::onelog_CR_fourier_aux1} by noting that $\ii\sin( z)=\sinh(\ii z)$.
\item The function $\xi\mapsto F(\xi)$ is analytic for $\xi\in\C$ with $\Re{\xi}<1/2$. Note that $\xi\mapsto g(\xi; \bs{\eta})\one\{u\in\Omega_{\bs{\eta}}^+\}$ is analytic almost surely. Fix $\beta$ real with $\beta<1/2$, from~\eqref{eqn::onelog_CR_posexponent_aux1}, we have $|g(\xi; \bs{\eta})|\le g(\beta; \bs{\eta})$ and $\E[g(\beta; \bs{\eta})]\le C_{\beta}\LB(\Omega; \bs{x})$ for all $\xi\in\C$ with $\Re{\xi}\le \beta$. This confirms that $F(\xi)=\E[g(\xi; \bs{\eta})\one\{u\in\Omega_{\bs{\eta}}^+\}]$ is also analytic for $\xi\in\C$ with $\Re{\xi}<\beta$ as desired. 
\end{itemize}
\end{proof}

\begin{proof}[Proof of Proposition~\ref{prop::onelog_CR_fourier}]
For $\xi\in\C$ with $\Re{\xi}<(1+\gamma^2)/2$, from Theorem~\ref{thm::multiplelog_polygon}, we have
\begin{align*}
\E_{\gamma}\left[\left(\frac{\CR(\Omega; u)}{\CR(\Omega\setminus\bs{\eta}; u)}\right)^{\xi}\one\{u\in\Omega_{\bs{\eta}}^+\}\right]
=&\exp(\gamma(\pi-\Gamma_0(u)))\E\left[\left(\frac{\CR(\Omega; u)}{\CR(\Omega\setminus\bs{\eta}; u)}\right)^{\xi-\frac{\gamma^2}{2}}\one\{u\in\Omega_{\bs{\eta}}^+\}\right]\\
=&\exp(\gamma(\pi-\Gamma_0(u)))\frac{\sin\left(\Gamma_0(u)\sqrt{2\xi-\gamma^2}\right)}{\sin\left(\pi\sqrt{2\xi-\gamma^2}\right)}.\tag{due to~\eqref{eqn::onelog_CR_fourier_aux1}}
\end{align*}
This completes the proof of~\eqref{eqn::onelog_CR_fourier}. 
\end{proof}

\subsection{Proof of Proposition~\ref{prop::twopoint_correlation}}
\label{subsec::twopoint_correlation}

\begin{lemma}
Assume the same notation as in Theorem~\ref{thm::multiplelog_polygon}  with $M=2$. 
Fix distinct $u_1, u_2\in\Omega$ and $\gamma\in\R$, we have 
\begin{align}\label{eqn::twopoints_neg}
\begin{split}
&\E\left[\left(\frac{\CR(\Omega\setminus\bs{\eta}; u_1)}{\CR(\Omega; u_1)}\right)^{\frac{\gamma^2}{2}}\left(\frac{\CR(\Omega\setminus\bs{\eta}; u_2)}{\CR(\Omega; u_2)}\right)^{\frac{\gamma^2}{2}}\left(\one\{u_1\in\Omega_{\bs{\eta}}^+\}-\one\{u_2\in\Omega_{\bs{\eta}}^+\}\right)\right]\\
=&\exp(-\gamma^2\Green(\Omega; u_1, u_2))\frac{\sinh(\gamma(\Gamma_0( u_1)-\Gamma_0( u_2)))}{\sinh(\pi\gamma)}. 
\end{split}
\end{align}
\end{lemma}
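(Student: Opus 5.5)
We mimic the proof of~\eqref{eqn::onelog_CR_negexponent_aux}. We apply Theorem~\ref{thm::multiplelog_polygon} with $M=2$, and we use the fact that a Radon--Nikodym derivative has expectation one under $\PP$. Taking weights $(\gamma_1,\gamma_2)=(\gamma,-\gamma)$ and $(-\gamma,\gamma)$, and also $(\gamma,\gamma)$ and $(-\gamma,-\gamma)$, will give a linear system in the four unknowns
\[
A_{\pm\pm}:=\E\left[R_1R_2\,\one\{u_1\in\Omega^{\pm}_{\bs{\eta}},\,u_2\in\Omega^{\pm}_{\bs{\eta}}\}\,\exp\left(\epsilon\,\gamma^2\Green(\Omega\setminus\bs{\eta};u_1,u_2)\right)\right],
\]
where $R_\ell=(\CR(\Omega\setminus\bs{\eta};u_\ell)/\CR(\Omega;u_\ell))^{\gamma^2/2}$. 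The target quantity is $A_{+-}-A_{-+}$ (with $\epsilon$ appropriate). The key simplification is that on the event $\{u_1\in\Omega^+_{\bs{\eta}}\}\setminus\{u_2\in\Omega^+_{\bs{\eta}}\}$ (or the reverse), the points $u_1,u_2$ lie in different components of $\Omega\setminus\bs{\eta}$. Hence $\Green(\Omega\setminus\bs{\eta};u_1,u_2)=0$ there, and the Green factor in the Radon--Nikodym derivative~\eqref{eqn::multiplelog_polygon_RN} disappears.

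Concretely, I take $\gamma_1=\gamma_2=\gamma$. The derivative~\eqref{eqn::multiplelog_polygon_RN} equals
\[
e^{\gamma^2(\Green(\Omega\setminus\bs{\eta};u_1,u_2)-\Green(\Omega;u_1,u_2))}R_1R_2\,\frac{(e^{\pi\gamma}\one^+_1+\one^-_1)(e^{\pi\gamma}\one^+_2+\one^-_2)}{e^{\gamma(\Gamma_0(u_1)+\Gamma_0(u_2))}}.
\]
Taking its expectation gives
\[
e^{2\pi\gamma}B_{++}+e^{\pi\gamma}(A_{+-}+A_{-+})+B_{--}=e^{\gamma^2\Green(\Omega;u_1,u_2)+\gamma(\Gamma_0(u_1)+\Gamma_0(u_2))},
\]
where $B_{\pm\pm}$ carry the factor $e^{\gamma^2\Green(\Omega\setminus\bs\eta;u_1,u_2)}$ and $A_{+-},A_{-+}$ carry none, by the observation above. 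Replacing $\gamma$ by $-\gamma$ leaves the Green factor unchanged and gives a second equation. To isolate the antisymmetric combination, I also use $(\gamma_1,\gamma_2)=(\gamma,-\gamma)$, which has Green factor $e^{-\gamma^2(\dots)}$, together with its negation. The terms $A_{+-}$ and $A_{-+}$ are then multiplied by $e^{\pi\gamma}$ and $e^{-\pi\gamma}$ respectively (or swapped), while the same-side terms form a different pair $B'_{++},B'_{--}$.

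From the $(\gamma,-\gamma)$ pair, the identities read
\[
e^{\pi\gamma}A_{+-}+e^{-\pi\gamma}A_{-+}+B'_{++}+B'_{--}=e^{-\gamma^2\Green(\Omega;u_1,u_2)+\gamma(\Gamma_0(u_1)-\Gamma_0(u_2))}
\]
and the same identity with $\gamma\mapsto-\gamma$. Subtracting the two eliminates $B'_{++}+B'_{--}$, and yields
\[
2\sinh(\pi\gamma)(A_{+-}-A_{-+})=2e^{-\gamma^2\Green(\Omega;u_1,u_2)}\sinh\bigl(\gamma(\Gamma_0(u_1)-\Gamma_0(u_2))\bigr).
\]
Since the difference of indicators $\one\{u_1\in\Omega^+_{\bs{\eta}}\}-\one\{u_2\in\Omega^+_{\bs{\eta}}\}$ is supported exactly on the separated events, with value $+1$ on $A_{+-}$ and $-1$ on $A_{-+}$, this is precisely~\eqref{eqn::twopoints_neg} after dividing by $\sinh(\pi\gamma)$. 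The case $\gamma=0$ follows either by continuity or directly.

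The main point to check carefully is the bookkeeping of which component each $u_\ell$ falls into, and that $\Green(\Omega\setminus\bs{\eta};u_1,u_2)=0$ exactly on the separated events. This uses the footnote convention in~\eqref{eqn::reducedGreenenergy_def} together with the fact that $\Omega^+_{\bs{\eta}}$ and $\Omega^-_{\bs{\eta}}$ are unions of distinct components. I also need to confirm that the expectations are finite: this holds because $R_\ell\le1$, the separated terms carry no Green factor, and the same-side terms only appear with the bounded factor $e^{-\gamma^2\Green}\le1$ in the chosen subtraction. Only the $(\gamma,-\gamma)$ pair is actually needed, so the $(\gamma,\gamma)$ system can be dropped.
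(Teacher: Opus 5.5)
Your proposal is correct and is essentially the paper's proof. The paper also takes the expectation of the Radon--Nikodym derivative in Theorem~\ref{thm::multiplelog_polygon} with $(\gamma_1,\gamma_2)=(\gamma,-\gamma)$ and $(-\gamma,\gamma)$, uses $\Green(\Omega\setminus\bs{\eta};u_1,u_2)=0$ on the separated events, and subtracts the two identities to cancel the same-side terms, exactly as you do. The $(\gamma,\gamma)$ system is unnecessary, as you note yourself, and your remark that $\gamma=0$ must be read as a limit is a small point the paper leaves implicit.
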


\begin{proof}
The law of $\bs{\eta}$ under $\PP_{\bs{\gamma}}$ is the same as its law under $\PP$ weighted by the Radon-Nikodym derivative given by
\begin{align*}
\frac{\exp(\gamma_1\gamma_2\Green(\Omega\setminus\bs{\eta}; u_1, u_2))}{\exp(\gamma_1\gamma_2\Green(\Omega; u_1, u_2))}
\times \prod_{j=1}^2\left(\frac{\CR(\Omega\setminus\bs{\eta}; u_{j})}{\CR(\Omega; u_{j})}\right)^{\frac{\gamma_{j}^2}{2}}\frac{\left(\one\{u_{j}\in\Omega_{\bs{\eta}}^+\}\exp\left(\pi\gamma_{j}\right)+\one\{u_{j}\in\Omega_{\bs{\eta}}^-\}\right)}{\exp(\gamma_{j}\Gamma_0(u_{j}))}. 
\end{align*}
Consider the following four events:
\begin{align*}
A=\{u_1, u_2\in\Omega_{\bs{\eta}}^+\}, \qquad B=\{u_1\in\Omega_{\bs{\eta}}^+, u_2\in\Omega_{\bs{\eta}}^-\}, \qquad C=\{u_1\in\Omega_{\bs{\eta}}^-, u_2\in\Omega_{\bs{\eta}}^+\}, \qquad D=\{u_1, u_2\in\Omega_{\bs{\eta}}^-\}. 
\end{align*}
Taking the expectation of the Radon-Nikodym derivative, we have
\begin{align}\label{eqn::twopoints_aux1}
\begin{split}
&\exp(\gamma_1\gamma_2\Green(\Omega; u_1, u_2))\exp(\gamma_1\Gamma_0(u_1)+\gamma_2\Gamma_0(u_2))\\
=&\E\left[\one_A\exp(\gamma_1\gamma_2\Green(\Omega\setminus\bs{\eta}; u_1, u_2))
\times \prod_{j=1}^2\left(\frac{\CR(\Omega\setminus\bs{\eta}; u_{j})}{\CR(\Omega; u_{j})}\right)^{\frac{\gamma_{j}^2}{2}}\right]\exp(\pi(\gamma_1+\gamma_2))\\
&+\E\left[\one_B \prod_{j=1}^2\left(\frac{\CR(\Omega\setminus\bs{\eta}; u_{j})}{\CR(\Omega; u_{j})}\right)^{\frac{\gamma_{j}^2}{2}}\right]\exp(\pi\gamma_1)+\E\left[\one_C \prod_{j=1}^2\left(\frac{\CR(\Omega\setminus\bs{\eta}; u_{j})}{\CR(\Omega; u_{j})}\right)^{\frac{\gamma_{j}^2}{2}}\right]\exp(\pi\gamma_2)\\
&+\E\left[\one_D\exp(\gamma_1\gamma_2\Green(\Omega\setminus\bs{\eta}; u_1, u_2))
\times \prod_{j=1}^2\left(\frac{\CR(\Omega\setminus\bs{\eta}; u_{j})}{\CR(\Omega; u_{j})}\right)^{\frac{\gamma_{j}^2}{2}}\right], 
\end{split}
\end{align}
where we use the fact that $\Green(\Omega\setminus\bs{\eta}; u_1, u_2)=0$ on the events $B$ and $C$. 
Setting $\gamma_1=-\gamma_2=\gamma$ in~\eqref{eqn::twopoints_aux1}, we obtain 
\begin{align}\label{eqn::twopoints_aux2}
\begin{split}
&\exp(-\gamma^2\Green(\Omega; u_1, u_2))\exp(\gamma\Gamma_0(u_1)-\gamma\Gamma_0(u_2))\\
=&\E\left[\one_A\exp(-\gamma^2\Green(\Omega\setminus\bs{\eta}; u_1, u_2))
\times \prod_{j=1}^2\left(\frac{\CR(\Omega\setminus\bs{\eta}; u_{j})}{\CR(\Omega; u_{j})}\right)^{\frac{\gamma_{j}^2}{2}}\right]\\
&+\E\left[\one_B \prod_{j=1}^2\left(\frac{\CR(\Omega\setminus\bs{\eta}; u_{j})}{\CR(\Omega; u_{j})}\right)^{\frac{\gamma_{j}^2}{2}}\right]\exp(\pi\gamma)+\E\left[\one_C \prod_{j=1}^2\left(\frac{\CR(\Omega\setminus\bs{\eta}; u_{j})}{\CR(\Omega; u_{j})}\right)^{\frac{\gamma_{j}^2}{2}}\right]\exp(-\pi\gamma)\\
&+\E\left[\one_D\exp(-\gamma^2\Green(\Omega\setminus\bs{\eta}; u_1, u_2))
\times \prod_{j=1}^2\left(\frac{\CR(\Omega\setminus\bs{\eta}; u_{j})}{\CR(\Omega; u_{j})}\right)^{\frac{\gamma_{j}^2}{2}}\right].
\end{split}
\end{align}

Setting $\gamma_1=-\gamma_2=-\gamma$ in~\eqref{eqn::twopoints_aux1}, we obtain 
\begin{align}\label{eqn::twopoints_aux3}
\begin{split}
&\exp(-\gamma^2\Green(\Omega; u_1, u_2))\exp(-\gamma\Gamma_0(u_1)+\gamma\Gamma_0(u_2))\\
=&\E\left[\one_A\exp(-\gamma^2\Green(\Omega\setminus\bs{\eta}; u_1, u_2))
\times \prod_{j=1}^2\left(\frac{\CR(\Omega\setminus\bs{\eta}; u_{j})}{\CR(\Omega; u_{j})}\right)^{\frac{\gamma_{j}^2}{2}}\right]\\
&+\E\left[\one_B \prod_{j=1}^2\left(\frac{\CR(\Omega\setminus\bs{\eta}; u_{j})}{\CR(\Omega; u_{j})}\right)^{\frac{\gamma_{j}^2}{2}}\right]\exp(-\pi\gamma)+\E\left[\one_C \prod_{j=1}^2\left(\frac{\CR(\Omega\setminus\bs{\eta}; u_{j})}{\CR(\Omega; u_{j})}\right)^{\frac{\gamma_{j}^2}{2}}\right]\exp(\pi\gamma)\\
&+\E\left[\one_D\exp(-\gamma^2\Green(\Omega\setminus\bs{\eta}; u_1, u_2))
\times \prod_{j=1}^2\left(\frac{\CR(\Omega\setminus\bs{\eta}; u_{j})}{\CR(\Omega; u_{j})}\right)^{\frac{\gamma_{j}^2}{2}}\right]. 
\end{split}
\end{align}
Taking the difference between~\eqref{eqn::twopoints_aux2} and~\eqref{eqn::twopoints_aux3}, we obtain~\eqref{eqn::twopoints_neg} as desired.
\end{proof}

\begin{corollary}
Assume the same notation as in Theorem~\ref{thm::multiplelog_polygon}  with $M=2$. 
Fix distinct $u_1, u_2\in\Omega$. 
Then for $\xi\in\C$ with $\Re{\xi}\le 0$, we have 
\begin{align}\label{eqn::twopoints_extension}
\begin{split}
&\E\left[\left(\frac{\CR(\Omega; u_1)}{\CR(\Omega\setminus\bs{\eta}; u_1)}\right)^{\xi}\left(\frac{\CR(\Omega; u_2)}{\CR(\Omega\setminus\bs{\eta}; u_2)}\right)^{\xi}\left(\one\{u_1\in\Omega_{\bs{\eta}}^+\}-\one\{u_2\in\Omega_{\bs{\eta}}^+\}\right)\right]\\
=&\exp(2\xi\Green(\Omega; u_1, u_2))\frac{\sin((\Gamma_0( u_1)-\Gamma_0( u_2))\sqrt{2\xi})}{\sin(\pi\sqrt{2\xi})}. 
\end{split}
\end{align}
\end{corollary}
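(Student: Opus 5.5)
We follow the strategy of Corollary~\ref{cor::onelog_extension}: identify the formula on the negative real axis from the preceding lemma, then extend by analyticity. Define
\[
h(\xi;\bs{\eta}):=\left(\frac{\CR(\Omega; u_1)}{\CR(\Omega\setminus\bs{\eta}; u_1)}\right)^{\xi}\left(\frac{\CR(\Omega; u_2)}{\CR(\Omega\setminus\bs{\eta}; u_2)}\right)^{\xi},
\]
and set $F(\xi):=\E[h(\xi;\bs{\eta})(\one\{u_1\in\Omega_{\bs{\eta}}^+\}-\one\{u_2\in\Omega_{\bs{\eta}}^+\})]$.

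\emph{Finiteness and analyticity.} Since $\CR(\Omega\setminus\bs{\eta};u_j)\le \CR(\Omega;u_j)$, each ratio inside $h$ is at least $1$. Hence for $\Re\xi\le 0$ we have $|h(\xi;\bs{\eta})|\le 1$, and the integrand is bounded by $1$ in absolute value. This bound makes the integrability trivial, so no moment estimate like~\eqref{eqn::onelog_CR_posexponent_aux1} is needed. For each fixed $\bs{\eta}$, $\xi\mapsto h(\xi;\bs{\eta})$ is entire. Combining dominated convergence with Morera's theorem, $F$ is analytic on the open half-plane $\{\Re\xi<0\}$ and continuous up to $\{\Re\xi=0\}$.

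\emph{Identification on the negative axis.} For real $\beta\le 0$, put $\gamma=\sqrt{-2\beta}\ge 0$. Then $h(\beta;\bs{\eta})$ equals the product of the factors $(\CR(\Omega\setminus\bs{\eta};u_j)/\CR(\Omega;u_j))^{\gamma^2/2}$ appearing in~\eqref{eqn::twopoints_neg}, so that lemma gives
\[
F(\beta)=\exp(2\beta\Green(\Omega;u_1,u_2))\,\frac{\sinh(\sqrt{-2\beta}\,(\Gamma_0(u_1)-\Gamma_0(u_2)))}{\sinh(\pi\sqrt{-2\beta})}.
\]
Using $\sinh(\ii z)=\ii\sin z$ with $\sqrt{-2\beta}=-\ii\sqrt{2\beta}$, the factors of $\ii$ cancel, and this matches the right-hand side of~\eqref{eqn::twopoints_extension}.

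\emph{Extension to the half-plane.} The right-hand side of~\eqref{eqn::twopoints_extension} is analytic in $\xi$, because $\sin(a\sqrt{2\xi})/\sin(\pi\sqrt{2\xi})$ is an even function of $\sqrt{2\xi}$, hence single-valued. Its denominator vanishes only at $\sqrt{2\xi}\in\Z\setminus\{0\}$, i.e.\ at $\xi=k^2/2>0$, so there are no poles for $\Re\xi\le 0$; the value at $\xi=0$ is understood as the limit $(\Gamma_0(u_1)-\Gamma_0(u_2))/\pi$. The two analytic functions agree on $(-\infty,0)$, which has accumulation points in the open half-plane, so by the identity theorem they agree on $\{\Re\xi<0\}$. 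By continuity they also agree on the boundary line $\Re\xi=0$.

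The only delicate point is this boundary line, including $\xi=0$ itself. It is handled by continuity of both sides: dominated convergence with the bound $1$ for $F$, and the explicit formula for the right-hand side. I expect no substantial obstacle here.
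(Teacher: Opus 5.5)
Your proposal is correct and is the paper's argument. The paper likewise notes that \eqref{eqn::onelog_CR_posexponent_aux1} is unavailable and replaces it by $\CR(\Omega\setminus\bs{\eta};u_j)\le\CR(\Omega;u_j)$, which is why the extension is restricted to $\Re\xi\le 0$. Your write-up fills in details the paper leaves implicit: the bound $|h|\le 1$, the identification via \eqref{eqn::twopoints_neg} at $\gamma=\sqrt{-2\beta}$, the evenness in $\sqrt{2\xi}$ that removes the branch issue, and continuity at the line $\Re\xi=0$ and at $\xi=0$.
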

\begin{proof}
This can be proved using a similar argument as in the proof of Corollary~\ref{cor::onelog_extension}. Note that we no longer have analogous conclusion of~\eqref{eqn::onelog_CR_posexponent_aux1} in this case, but we have $0\le \CR(\Omega\setminus\bs{\eta}; u_j)\le \CR(\Omega; u_j)$. Thus the proof for the analytic extension in this case only works for $\Re{\xi}\le 0$. 
\end{proof}

\begin{proof}[Proof of Proposition~\ref{prop::twopoint_correlation}]
For $\xi\in\C$ with $\Re{\xi}\le \gamma^2/2$, from Theorem~\ref{thm::multiplelog_polygon}, we have
\begin{align*}
&\E_{\bs{\gamma}}\left[\left(\frac{\CR(\Omega; u_1)}{\CR(\Omega\setminus\bs{\eta}; u_1)}\right)^{\xi}\left(\frac{\CR(\Omega; u_2)}{\CR(\Omega\setminus\bs{\eta}; u_2)}\right)^{\xi}\left(\one\{u_1\in\Omega_{\bs{\eta}}^+\}-\one\{u_2\in\Omega_{\bs{\eta}}^+\}\right)\right]\\
=&\E_{\bs{\gamma}}\left[\left(\frac{\CR(\Omega; u_1)}{\CR(\Omega\setminus\bs{\eta}; u_1)}\right)^{\xi}\left(\frac{\CR(\Omega; u_2)}{\CR(\Omega\setminus\bs{\eta}; u_2)}\right)^{\xi}\left(\one\{u_1\in\Omega_{\bs{\eta}}^+, u_2\in\Omega_{\bs{\eta}}^-\}-\one\{u_1\in\Omega_{\bs{\eta}}^-, u_2\in\Omega_{\bs{\eta}}^+\}\right)\right]\\
=&\E\left[\left(\frac{\CR(\Omega; u_1)}{\CR(\Omega\setminus\bs{\eta}; u_1)}\right)^{\xi-\frac{\gamma^2}{2}}\left(\frac{\CR(\Omega; u_2)}{\CR(\Omega\setminus\bs{\eta}; u_2)}\right)^{\xi-\frac{\gamma^2}{2}}\left(\one\{u_1\in\Omega_{\bs{\eta}}^+, u_2\in\Omega_{\bs{\eta}}^-\}-\one\{u_1\in\Omega_{\bs{\eta}}^-, u_2\in\Omega_{\bs{\eta}}^+\}\right)\right]\\
&\times\frac{\exp(\pi\gamma)}{\exp(\gamma^2\Green(\Omega; u_1, u_2)+\gamma(\Gamma_0(u_1)+\Gamma_0(u_2)))}\\
=&\exp((2\xi-\gamma^2)\Green(\Omega; u_1, u_2))\frac{\sin\left((\Gamma_0( u_1)-\Gamma_0( u_2))\sqrt{2\xi-\gamma^2}\right)}{\sin\left(\pi\sqrt{2\xi-\gamma^2}\right)}\frac{\exp(\pi\gamma)}{\exp(\gamma^2\Green(\Omega; u_1, u_2)+\gamma(\Gamma_0(u_1)+\Gamma_0(u_2)))},
\end{align*}
where the last equal sign is due to~\eqref{eqn::twopoints_extension}. This completes the proof of~\eqref{eqn::twopoint_correlation}. 
\end{proof}


\section{Brownian motion representation}
\label{sec::Brownian_represent}
We will prove Theorem~\ref{thm::Brownian_represent} in this section. 
The idea is to replace the punctures $(u_\ell)_{1\le\ell\le M}$
by macroscopic holes $\cup_{1\le\ell\le M}B(u_\ell,\eps)$,
and use \cite{AruLupuSepulvedaFPSGFFCVGISO} for the representation of level lines in a multiply connected domain as envelops of clusters of Brownian paths. Then one lets $\eps\to 0$ and obtains Theorem~\ref{thm::Brownian_represent}
through the convergence in total variation of level lines.

\subsection{Conclusion with cutoff}
Define the multiply-connected domain $\Omega_\eps:=\Omega\setminus\left(\cup_{1\le\ell\le M}B(u_\ell,\eps)\right)$ for $\eps>0$. Suppose $\Gamma_\eps$ is $\GFF$ in $\Omega_\eps$ and the boundary data $h_{\eps}$ is given as follows: $h_{\eps}$ equals $\pi$ on $\partial^+\Omega$ and it equals $0$ on $\partial^-\Omega$, and
\begin{equation}\label{eqn::multiplelog_polygon_boundarydata2}
h_{\eps}(z)=\sum_{1\le\ell\le M}\gamma_\ell \Green(\Omega;z,u_\ell) \qquad\text{for }z\in\cup_{1\le\ell\le M}\partial B(u_\ell,\eps).
\end{equation}
\begin{proposition}\label{prop::BES_truncated}
Assume the the same setup as in Theorem~\ref{thm::Brownian_represent}. We sample $2+\frac{1}{2}M(M+3)$ independent Poisson point processes. 
\begin{enumerate}
\item[(1)] Poisson point process of Brownian loops inside $\Omega_\eps$ with intensity $c=1/2$.
\item[(2)] Poisson point process of Brownian excursions with intensity
\begin{align}
\frac{\pi}{4}\iint_{\partial^+\Omega\times \partial^+\Omega}\ud x\ud y \;\Poisson(\Omega_\eps; x, y)\bm^{\#}(\Omega_\eps; x, y)[\cdot].
\end{align}
\item[(3)] For each $u_{\ell}$, Poisson point process of Brownian excursions in $\Omega_{\eps}$ from $\partial B(u_\ell,\eps)$ to $\partial B(u_\ell,\eps)$ with intensity
\begin{align}
\frac{1}{4\pi}\iint_{\partial B(u_\ell,\eps)\times \partial B(u_\ell,\eps)}\ud x\ud y \;h_{\eps}(x)h_{\eps}(y)\Poisson(\Omega_\eps; x, y)\bm^{\#}(\Omega_\eps; x, y)[\cdot]. 
\end{align}
\item[(4)] For each $u_{\ell}$, Poisson point process of Brownian excursions in $\Omega_\eps$ from $\partial B(u_\ell,\eps)$ to $\partial^+\Omega$ with intensity
\begin{align}
\frac{1}{2}\iint_{\partial B(u_\ell,\eps)\times\partial^+\Omega}\ud x\ud y \;h_{\eps}(x)\Poisson(\Omega_\eps; x, y)\bm^{\#}(\Omega_\eps; x, y)[\cdot].
\end{align}
\item[(5)] For each pair $u_i\neq u_j$, Poisson point process of Brownian excursions from $\partial B(u_i,\eps)$ to $\partial B(u_j,\eps)$ with intensity
\begin{align}
\frac{1}{2\pi}\iint_{\partial B(u_i,\eps)\times \partial B(u_j,\eps)}\ud x\ud y \;h_{\eps}(x)h_{\eps}(y)\Poisson(\Omega_\eps; x, y)\bm^{\#}(\Omega_\eps; x, y)[\cdot].
\end{align}
\end{enumerate}
We consider the clusters formed by the collection of all the above Brownian trajectories,
and among these clusters, specifically the clusters connected to the outer boundary $\partial\Omega$,
which are exactly the clusters containing at least one excursion from (2) or (4).
Let $\bs{\eta}_{\eps}$ be the envelop of these clusters connected to $\partial\Omega$, which consists of
$N$ disjoint curves in $\Omega_\eps$ forming a pairing of $\{x_1,\dots ,x_{2N}\}$. Then $\bs{\eta}_{\eps}$ has the same law as the union of the generalized level lines (see~\cite{AruSepulvedaWernerBTLStwoDGFF}) of $\Gamma_\eps$ starting from $x_{2j-1}$ for $1\le j\le N$.
\end{proposition}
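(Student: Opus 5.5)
This is a direct application of the Brownian-path representation of generalized level lines in~\cite{AruLupuSepulvedaFPSGFFCVGISO}, valid for GFFs in finitely connected domains with bounded, piecewise constant or more general nonnegative boundary data. The plan is to recall that result in the form: given $\Gamma_\eps$ with nonnegative bounded boundary data $h_\eps$ on $\partial\Omega_\eps$, take a loop soup of intensity $c=1/2$ in $\Omega_\eps$. Independently, take a Poisson point process of boundary-to-boundary Brownian excursions with intensity
\[
\frac{1}{4\pi}\iint_{\partial\Omega_\eps\times\partial\Omega_\eps}\ud x\,\ud y\; h_\eps(x)h_\eps(y)\,\Poisson(\Omega_\eps;x,y)\,\bm^{\#}(\Omega_\eps;x,y)[\cdot].
\]
Then the envelope of the clusters touching the part of the boundary where $h_\eps>0$ gives the first passage set / generalized level lines. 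Since $\gamma_\ell\ge 0$ and Green's function is positive, $h_\eps\ge 0$ everywhere, so the hypotheses hold once we check the boundary regularity: $\partial\Omega$ is piecewise analytic and the inner boundaries are circles, so the boundary Poisson kernels of $\Omega_\eps$ are well defined.

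The key step is to decompose the single excursion intensity according to the pieces of $\partial\Omega_\eps$ where $h_\eps\neq 0$. These pieces are $\partial^+\Omega$, where $h_\eps=\pi$, and the circles $\partial B(u_\ell,\eps)$. By the superposition property of Poisson processes, the intensity splits as a sum over ordered pairs of pieces.
\begin{itemize}
\item The $\partial^+\Omega\times\partial^+\Omega$ term has intensity $\frac{1}{4\pi}\pi^2=\frac{\pi}{4}$, which gives~(2).
\item The $\partial B(u_\ell,\eps)$ self-pairs give~(3) directly.
\item The two ordered cross terms between $\partial B(u_\ell,\eps)$ and $\partial^+\Omega$ combine, by the symmetry $\Poisson(x,y)=\Poisson(y,x)$ and time reversal of $\bm^{\#}$, into $2\cdot\frac{1}{4\pi}\cdot\pi\,h_\eps(x)=\frac12 h_\eps(x)$. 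This gives~(4) for unoriented paths.
\item The two ordered terms between distinct circles combine into the factor $\frac{1}{2\pi}$, which gives~(5).
\end{itemize}
Counting the processes: one for the loops, one for the $\partial^+\Omega$ excursions, $M$ for the self-pairs, $M$ for the circle–$\partial^+\Omega$ pairs, and $\binom{M}{2}$ for distinct pairs of circles. This totals $2+2M+\frac{M(M-1)}2=2+\frac{M(M+3)}{2}$, as claimed.

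Next I will identify which clusters count. In~\cite{AruLupuSepulvedaFPSGFFCVGISO}, the relevant object is the set of clusters hitting the boundary. Here one must argue that the envelope of clusters connected to the outer boundary $\partial\Omega$ is exactly the union of the $N$ generalized level lines started from the $x_{2j-1}$. Clusters that only touch the inner circles correspond to the first passage set near the holes and do not separate $\partial^+\Omega$ from $\partial^-\Omega$. A cluster touches $\partial\Omega$ iff it contains a path ending on $\partial\Omega$, and since $h_\eps=0$ on $\partial^-\Omega$, only types~(2) and~(4) qualify.

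The main obstacle is this identification of the boundary of the outer-connected component of the first passage set $\mathbb{A}_{-?}$ with the $N$ level-line arcs forming a pairing. I would invoke Remark~5.12 of~\cite{AruLupuSepulvedaFPSGFFCVGISO} together with the characterization of generalized level lines in~\cite{AruSepulvedaWernerBTLStwoDGFF}. The local-set property, combined with the boundary values $0$ versus $\pi$ (i.e.\ $\pm\lambda$ heights at $\kappa=4$) along the envelope, forces the envelope to consist of level lines. Disjointness and simplicity follow from those of generalized level lines.
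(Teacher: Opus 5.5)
Your intensity bookkeeping is correct and matches what the paper leaves implicit. Since $h_\eps\ge 0$, splitting the excursion intensity $\frac{1}{4\pi}\iint h_\eps(x)h_\eps(y)\Poisson(\Omega_\eps;x,y)\bm^{\#}(\Omega_\eps;x,y)$ over ordered pairs of boundary pieces gives exactly (2)--(5) and the count $2+\frac{1}{2}M(M+3)$. Appealing to the Brownian description of the first passage set $\A_0^{h_\eps}$ of $\Gamma_\eps$ is also the paper's second step. That result (Proposition~5.3 of \cite{AruLupuSepulvedaFPSGFFCVGISO}) is stated for piecewise constant data, so, as in the paper, you must say that its proof extends to the continuous data $h_\eps$ on the circles.

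The gap is in the step you yourself flag as the main obstacle: identifying the envelope of the $\partial\Omega$-connected clusters with the generalized level lines, for $N\ge 2$ in the multiply connected domain $\Omega_\eps$. The results you cite do not cover this case. Remark~5.12 of \cite{AruLupuSepulvedaFPSGFFCVGISO} is the case without holes, and the single-curve case in a multiply connected domain is Corollary~5.10 there. The argument that ``the local-set property plus the boundary values $0$ and $\pi$ forces the envelope to be level lines'' does not go through as stated. The envelope is a measurable function of the first passage set, but a subset of a local set determined by that set is not automatically local. To make your route work, you would have to:
\begin{enumerate}
\item[(i)] prove that the envelope alone is a local set whose harmonic function is $\pi$ on its $\partial^+$ side, $0$ on its $\partial^-$ side and $h_\eps$ on the circles;
\item[(ii)] know a priori that it consists of $N$ disjoint simple curves;
\item[(iii)] have a uniqueness theorem characterizing $N$-tuples of generalized level lines in $\Omega_\eps$ by such data.
\end{enumerate}
None of this is supplied, and your last sentence about disjointness and simplicity is circular for the same reason.

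The paper avoids these issues by going in the opposite direction, where only uniqueness of the first passage set is needed.
\begin{enumerate}
\item[(a)] Start from the generalized level lines $\bs{\eta}_\eps$, which are local sets with known boundary data.
\item[(b)] Conditionally on them, sample the first passage set $\A_0^{v_\eps}$ in the $\partial^+$-side region $\LC_\eps$. Its boundary data $v_\eps$ equals $\pi$ on $\partial\LC_\eps$ away from the circles and $h_\eps$ on the circles.
\item[(c)] Check, as in the proof of Lemma~5.4 of \cite{LiuWuCrossingProbaMGFF}, that $\A_0^{v_\eps}\cup\partial\Omega$ satisfies the defining properties of the first passage set $\A_0^{h_\eps}$ of $\Gamma_\eps$. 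Strictly speaking, one should also glue in the first passage sets of any $\partial^-$-side components containing a hole.
\item[(d)] Uniqueness of the first passage set identifies the glued set with $\A_0^{h_\eps}$, so $\bs{\eta}_\eps$ is its frontier. Proposition~5.3 of \cite{AruLupuSepulvedaFPSGFFCVGISO} then converts this into the envelope statement.
\end{enumerate}
This gluing-plus-uniqueness argument is the idea missing from your proposal.
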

\begin{proof}
See~\cite[Corollary 5.10]{AruLupuSepulvedaFPSGFFCVGISO} for the case $N=1$. For the general case $N\ge 2$, we can use almost the same proof of~\cite[Lemma 5.4]{LiuWuCrossingProbaMGFF}. The proof consists of two steps. For completeness, we summarize the strategy briefly below.
\begin{itemize}
\item
Given $\bs{\eta}_{\eps}$, denote by $\LC_\eps$ the union of connected components of $\Omega_\eps\setminus\bs{\eta}_{\eps}$ wohse boundaries contain $\partial^+\Omega$. By the property of generlized level lines, the conditional law of $\Gamma_\eps$ equals $\GFF$ in $\LC_\eps$ and the boundary data $v_\eps$ is given as follows: $v_\eps=\pi$ on $\partial\LC_\eps\setminus \left(\cup_{1\le\ell\le M}\partial B(u_\ell,\eps)\right)$ and $v_\eps=h_{\eps}$ on $\cup_{1\le\ell\le M}\partial B(u_\ell,\eps)$. Sample the first passage set $\A^{v_\eps}_0$ in $\LC_\eps$ (see~\cite[Definition 3.3]{AruLupuSepulvedaFPSGFFCVGISO} for the definition of the first passage set). By the same proof in~\cite[Lemma 5.4]{LiuWuCrossingProbaMGFF}, we can prove that $\A^{v_\eps}_0\cup\partial\Omega$ is the first passage set $\A^{h_{\eps}}_0$ of $\Gamma_\eps$ in $\Omega$. In particular, we have that $\bs{\eta}_{\eps}$ is the frontier of $\A^{h_{\eps}}_0$.
\item
By~\cite[Proposition 5.3]{AruLupuSepulvedaFPSGFFCVGISO}, the first passage set $\A^{h_{\eps}}_0$ has the same law as the cluster of Brownian trajectories. Note that in~\cite[Proposition 5.3]{AruLupuSepulvedaFPSGFFCVGISO}, the authors required the harmonic function is piecewise constant on the boundary, but the same proof works in the setting of Proposition~\ref{prop::BES_truncated}.
\end{itemize}
By the above argument, we complete the proof.
\end{proof}

\subsection{Convergence in total variation}
In this section, we compare the Brownian excursion soups in Proposition~\ref{prop::BES_truncated} and the Brownian excursion soups in Theorem~\ref{thm::Brownian_represent}. Denote by $\bs{\eta}$ the Brownian paths in Theorem~\ref{thm::Brownian_represent}. 
First, we consider a Brownian loop soup with intensity $1/2$ in the whole domain $\Omega$,
and denote by $K_{\rm ext}(\bs{u})$ the union of the topological closures of outermost clusters of the loop soup that surround at least one of the points $u_{\ell}$. 
By construction,  $K_{\rm ext}(\bs{u})$ is a random closed subset of $\Omega$.
Moreover, a.s., 
$K_{\rm ext}(\bs{u})\cap\{u_1,\dots u_M\} = \emptyset$.
For $\delta>0$, we denote by $E_\delta$ the event
\begin{displaymath}
E_\delta = \{K_{\rm ext}(\bs{u})\subset \Omega_\delta\}.
\end{displaymath}
By the above,
\begin{displaymath}
\lim_{\delta\to 0}\PP[E_\delta] = 1.
\end{displaymath}

Further, we take $\delta>\eps>0$. A Brownian loop soup in $\Omega_\eps$ can be obtained as a subfamily of a Brownian loop soup in $\Omega$.
Next, we consider the families of trajectories (2), (3), (4) and (5) from Theorem \ref{thm::Brownian_represent}.
From these, we remove the trajectories contained in $\Omega\setminus \Omega_\delta$ (in case of (2)).
As for the trajectories intersecting $\Omega_\delta$, we remove the initial and the final the stretches contained in 
$\Omega\setminus \Omega_\delta$: no stretches removed from trajectories in (2), 
initial and final stretches removed from trajectories in (3) and (5), 
and initial stretch removed from trajectories in (4).
We will denote by $\Xi_{\delta}$ the obtained family of partial trajectories.
Similarly, we consider the families of trajectories (2), (3), (4) and (5) from
Proposition \ref{prop::BES_truncated}, remove the trajectories contained in $\Omega\setminus \Omega_\delta$
and for the other remove the initial and final stretches if contained in 
$\Omega\setminus \Omega_\delta$.
We will denote by $\Xi_{\delta, \eps}$ the obtained family of partial trajectories.
The goal of the above ``surgery" is to remove the information on whether an excursion starts (and ends) in a interior marked point $u_\ell$ or rather on a small circle around this point.

\begin{lemma}\label{lem::abs_cont_exc}
For $\delta>\eps>0$, the law of $\Xi_{\delta, \eps}$  is absolutely continuous with respect to that of $\Xi_{\delta}$.
Moreover, for fixed $\delta>0$, as $\eps\to 0$, the law of $\Xi_{\delta, \eps}$ converges in total variation to
$\Xi_{\delta}$.
\end{lemma}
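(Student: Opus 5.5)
The plan is to reduce everything to a comparison of Poisson point processes. Both $\Xi_\delta$ and $\Xi_{\delta,\eps}$ are superpositions of independent Poisson point processes of (partial) trajectories, obtained from the families (2)--(5) by a deterministic surgery. The image of a Poisson point process under a measurable map is again Poisson, so each surgered family is a Poisson point process, with intensity measures $\mu_\delta$ and $\mu_{\delta,\eps}$ say. I will use the classical criterion for Poisson point processes. If $\mu_{\delta,\eps}\ll\mu_\delta$ with density $f_\eps$ and $\int(\sqrt{f_\eps}-1)^2\,\ud\mu_\delta<\infty$, then the laws are mutually absolutely continuous. Moreover, the total variation distance is bounded by a constant times $\bigl(\int(\sqrt{f_\eps}-1)^2\,\ud\mu_\delta\bigr)^{1/2}$, via the Hellinger distance. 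So it suffices to write both intensities explicitly, show absolute continuity, and show that this Hellinger integral tends to $0$ as $\eps\to0$ with $\delta$ fixed. I will do this family by family, in the order (2), (4), (3), (5).

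\emph{Family (2).} The intensities are the excursion measures on $\partial^+\Omega$ in $\Omega$ and in $\Omega_\eps$, up to the common factor $\frac{\pi}{4}$. By restriction, the second is the first restricted to excursions avoiding $\cup_\ell B(u_\ell,\eps)$. After the surgery, which only discards excursions contained in $\Omega\setminus\Omega_\delta$, the density is the indicator $f_\eps=\one\{\text{avoid } \cup_\ell B(u_\ell,\eps)\}$. The Hellinger integral is then the $\mu_\delta$-mass of excursions hitting $\cup_\ell B(u_\ell,\eps)$. This mass is finite, since such excursions must reach $\partial B(u_\ell,\delta)$ and the mass of excursions reaching a compact set away from $\partial\Omega$ is finite. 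It is of order $1/\log(1/\eps)\to0$, because a Brownian motion started on $\partial B(u_\ell,\delta)$ hits $B(u_\ell,\eps)$ before $\partial\Omega$ with probability $O(1/\log(1/\eps))$.

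\emph{Families (3)--(5).} These carry the real content. In $\Xi_\delta$, a trajectory of (4) starting at $u_\ell$ becomes, after deleting its initial stretch up to the first exit of $B(u_\ell,\delta)$, a Brownian path started at a point $w\in\partial B(u_\ell,\delta)$ and run in $\Omega$ until it exits at $y\in\partial^+\Omega$. By the strong Markov property and rotation invariance at the center, the intensity is $\frac{\gamma_\ell}{2}$ times the uniform measure $\ud\sigma(w)$ on the circle, times the law of Brownian motion from $w$ killed on $\partial\Omega$ and restricted to exit on $\partial^+\Omega$. For $\Xi_{\delta,\eps}$, I decompose the excursion from $x\in\partial B(u_\ell,\eps)$ at its first hitting of $\partial B(u_\ell,\delta)$. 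This gives $\frac12\int h_\eps(x)\,H_\eps(x,\ud w)$, where $H_\eps$ is the boundary-to-boundary excursion kernel of the annulus $\{\eps<|z-u_\ell|<\delta\}$ (perturbed by the other holes). It is followed by Brownian motion from $w$ in $\Omega_\eps$, i.e.\ additionally killed on the small circles. Thus $f_\eps(w,\omega)=\rho_\eps(w)\one\{\omega \text{ avoids the $\eps$-disks}\}$, where $\rho_\eps$ is the density of $\frac12\int h_\eps(x)H_\eps(x,\cdot)$ with respect to $\frac{\gamma_\ell}{2}\sigma$. I then need $\rho_\eps\to1$ uniformly on $\partial B(u_\ell,\delta)$. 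For this I use that $h_\eps=\gamma_\ell\log(1/\eps)+O(1)$ uniformly on $\partial B(u_\ell,\eps)$, by the expansion of Green's function. I also use that the annulus excursion kernel, integrated over the inner circle, has total mass $\asymp 1/\log(\delta/\eps)$ and exit distribution converging to uniform. The product then tends to the correct constant, which must match $\frac{\gamma_\ell}{2}$ by the normalization of the Poisson kernel; I will verify this by an explicit annulus computation. Family (5) is treated identically, with surgery at both ends: both endpoints give factors $\to1$, and matching $\frac{\gamma_i\gamma_j}{2\pi}\Green$ against $\frac{1}{2\pi}h_\eps h_\eps \Poisson$ follows from the same computation.

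\emph{Main obstacle.} I expect the hardest step to be family (3). There the bubble measure $\frac{\gamma_\ell^2}{4\pi}\bm(\Omega;u_\ell,u_\ell)$ has infinite mass, and one must check two things after surgery. First, only bubbles reaching $\partial B(u_\ell,\delta)$ survive, and their mass is finite. Second, the decomposition at first exit and last entrance of $B(u_\ell,\delta)$ gives a density $\frac{\gamma_\ell^2}{4\pi}\times$ (a uniform-type measure in $w$ and $w'$) $\times$ the Brownian path from $w$ to $w'$, which must be matched with the $\eps$-version with two factors of $h_\eps\approx\gamma_\ell\log(1/\eps)$. The resulting density is again $\rho_\eps(w)\rho'_\eps(w')$ times an avoidance indicator. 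Uniform convergence of $\rho_\eps$ together with the $O(1/\log(1/\eps))$ bound on the avoidance defect then makes all Hellinger integrals finite and vanishing. Independence across families then gives the claimed absolute continuity and total variation convergence.
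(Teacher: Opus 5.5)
Your route is the paper's. You compare the two Poisson processes through their intensities after the surgery, so that the density is an avoidance indicator times one factor per surgered endpoint, coming from the annulus $A_{\eps,\delta}=B(u_\ell,\delta)\setminus\overline{B(u_\ell,\eps)}$. For (4), your $\rho_\eps(w)$ is exactly the paper's factor $\frac{1}{\gamma_\ell\Poisson(B(u_\ell,\delta);u_\ell,w)}\int_{\partial B(u_\ell,\eps)}h_\eps(x)\Poisson(A_{\eps,\delta};x,w)\,\ud x$. The Hellinger formulation handles the infinite-mass families (2) and (3) more tidily than the paper's product-times-exponential density.

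\textbf{The gap.} The step you leave as ``must match $\frac{\gamma_\ell}{2}$'' does not hold with the paper's normalizations.
\begin{itemize}
\item[(i)] The harmonic measure of the outer circle in $A_{\eps,\delta}$ is $\log(|z-u_\ell|/\eps)/\log(\delta/\eps)$. Hence $\int_{\partial B(u_\ell,\delta)}\Poisson(A_{\eps,\delta};x,w)\,\ud w=1/(\eps\log(\delta/\eps))$.
\item[(ii)] By rotation invariance, $\int_{\partial B(u_\ell,\eps)}\Poisson(A_{\eps,\delta};x,w)\,\ud x=1/(\delta\log(\delta/\eps))$ for every $w$.
\item[(iii)] Since $h_\eps=\gamma_\ell\log(1/\eps)+O(1)$ on $\partial B(u_\ell,\eps)$, the numerator tends to $\gamma_\ell/\delta$. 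But $\gamma_\ell\Poisson(B(u_\ell,\delta);u_\ell,w)=\gamma_\ell/(2\pi\delta)$, so $\rho_\eps\to 2\pi$ uniformly, not $1$.
\end{itemize}
Equivalently, by Green's identity the total mass of family (4) in Proposition~\ref{prop::BES_truncated} tends to $\gamma_\ell\Gamma_0(u_\ell)$, while in Theorem~\ref{thm::Brownian_represent} it is $\gamma_\ell\Gamma_0(u_\ell)/(2\pi)$.

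At a terminal endpoint the factor compares $\gamma_j\log(1/\eps)\,\PP^w[\text{hit } B(u_j,\eps) \text{ before } \partial\Omega]\to\gamma_j\Green(\Omega;w,u_j)$ with the mass $\frac{1}{\pi}\Green(\Omega;w,u_j)$ of $\bm(\Omega;w,u_j)$, so it tends to $\pi\gamma_j$. The limiting intensity ratios are therefore $2\pi$ for (5) and $2\pi^2$ for (3). Consequently your Hellinger integral converges to a positive number. Poisson processes with intensities $c\mu$ and $\mu$, for $c\neq1$ and $\mu$ of finite nonzero mass, stay at positive total-variation distance. So the second assertion fails for the intensities as printed.

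The paper's proof has the same hole. It asserts that its factor tends to $1$ and that $\pi\iint h_\eps\Poisson(\Omega_\eps;x,z)\to\Gamma_0(u_\ell)$; the latter actually tends to $2\pi\gamma_\ell\Gamma_0(u_\ell)$. The defect is therefore in the constants of Theorem~\ref{thm::Brownian_represent}, not in your method. Replace (3)--(5) by the actual limits
\begin{itemize}
\item[(3)] $\frac{\pi\gamma_\ell^2}{2}\bm(\Omega;u_\ell,u_\ell)$,
\item[(4)] $\pi\gamma_\ell\int_{\partial^+\Omega}\ud y\,\Poisson(\Omega;u_\ell,y)\bm^{\#}(\Omega;u_\ell,y)$,
\item[(5)] $\gamma_i\gamma_j\Green(\Omega;u_i,u_j)\bm^{\#}(\Omega;u_i,u_j)$.
\end{itemize}
These values are also consistent with the isomorphism calibration that produces the $\frac{\pi}{4}$ in (2): the expected occupation density must equal $\frac{1}{2\pi}\bigl(\Gamma_0+\sum_\ell\gamma_\ell\Green(\Omega;u_\ell,\cdot)\bigr)^2$. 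With these constants $\rho_\eps\to1$ and your argument goes through verbatim.

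Two minor corrections. Because $f_\eps$ vanishes on paths hitting the $\eps$-disks, you only get one-sided absolute continuity of $\Xi_{\delta,\eps}$ with respect to $\Xi_\delta$, which is all the lemma claims. And for $\delta$ small the pre-exit piece lies entirely in $A_{\eps,\delta}$, so there is no perturbation by the other holes.
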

\begin{proof}
We will only some details in the case of trajectories of type (4), the other cases being similar.
First, we need an indicatrix that all the excursions are contained in $\Omega_\eps$.
Then fix $\ell\in\{1,\dots,M\}$.
For each erased initial stretch from $u_\ell$ to $y\in \partial B(u_\ell,\delta)$,
we add as a factor
\begin{equation}
\label{Eq int H eps delta}
\dfrac{1}{\gamma_\ell \Poisson(B(u_\ell,\delta);u_\ell,y)}
\int_{\partial B(u_\ell,\eps)} \ud x \, h_\eps(x)\Poisson(B(u_\ell,\delta)\setminus \overline{B(u_\ell,\eps)}; x,y) ,
\end{equation}
where $\Poisson(B(u_\ell,\delta)\setminus \overline{B(u_\ell,\eps)}; \cdot, \cdot)$
is the boundary Poisson kernel in the annulus $B(u_\ell,\delta)\setminus \overline{B(u_\ell,\eps)}$.
Finally, there is a ratio of partition functions which normalizes the density to integral $1$,
which equals
\begin{displaymath}
\exp\Big(\dfrac{\gamma_\ell}{2}\Big(\frac{\Gamma_0(u_\ell)}{\pi} 
- \iint_{\partial B(u_\ell,\eps)\times\partial^{+}\Omega}
\ud x\, \ud z\, h_\eps(x)\Poisson(\Omega_\eps; x,z)
\Big)\Big).
\end{displaymath}
This provides a description of the Radon-Nikodym derivative.
The convergence in total variation follows from the convergence of
$\Poisson((\Omega\setminus \overline{\Omega_\delta})\cap \Omega_\eps;\cdot,\cdot)$
\eqref{Eq int H eps delta} to $1$, and the convergence of
\begin{displaymath}
\pi\iint_{\partial B(u_\ell,\eps)\times\partial^{+}\Omega}
\ud x\, \ud z\, h_\eps(x)\Poisson(\Omega_\eps; x,z)
\end{displaymath}
to $\Gamma_0(u_\ell)$.
\end{proof}

\begin{lemma}\label{lem::conv_envelop_1}
Recall that we denote by $\bs{\eta}_{\eps}$ be the envelop in Proposition~\ref{prop::BES_truncated}. 
As $\eps\to 0$, the law of $\bs{\eta}_{\eps}$ converges in total variation to that of the envelop $\bs{\eta}$
from Theorem~\ref{thm::Brownian_represent}.
\end{lemma}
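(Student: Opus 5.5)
We plan to couple the two Brownian path ensembles through Lemma~\ref{lem::abs_cont_exc} and an additional localization event, and then show that on a high-probability event the envelop is a measurable function of the data that the coupling makes coincide. Fix $\delta>0$. We sample a single Brownian loop soup in $\Omega$ and obtain the loop soup in $\Omega_\eps$ as its subfamily of loops contained in $\Omega_\eps$. Using the total variation convergence in Lemma~\ref{lem::abs_cont_exc}, we then couple $\Xi_{\delta,\eps}$ and $\Xi_\delta$ so that they agree with probability at least $1-o_\eps(1)$ as $\eps\to0$. Conditionally on the truncated families, the removed stretches near the $u_\ell$ are independent pieces of paths inside the small balls $B(u_\ell,\delta)$, or inside the annuli $B(u_\ell,\delta)\setminus \overline{B(u_\ell,\eps)}$. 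The removed trajectories of type (3) in Theorem~\ref{thm::Brownian_represent} are bubbles staying in $B(u_\ell,\delta)$, together with their $\eps$-analogues.

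The key step is to prove that, on a good event $G_{\delta,\eps}$, the envelop of the clusters connected to $\partial\Omega$ is determined by the loops intersecting $\Omega_{\delta'}$ (for some $\delta'$ comparable to $\delta$, say $\delta'=2\delta$) together with $\Xi_\delta$ (respectively $\Xi_{\delta,\eps}$). The event $G_{\delta,\eps}$ will contain $E_{2\delta}$ together with the requirement that no cluster reaching $\partial\Omega$ has its boundary inside $\bigcup_\ell B(u_\ell,2\delta)$.

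The heuristic is as follows. If a cluster connected to $\partial\Omega$ gets within $2\delta$ of some $u_\ell$, then, since $\gamma_\ell> 0$, the rooted paths at $u_\ell$ (the bubbles, and the bridges in (4) and (5)) attach the neighbourhood of $u_\ell$ to that cluster. The envelop then passes around the whole small ball, and its shape outside the ball is unaffected by what happens inside. Similarly, if no boundary cluster comes near $u_\ell$, the interior details of the stretches near $u_\ell$ only modify clusters not connected to $\partial\Omega$, and so do not change $\bs{\eta}$.

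To make this precise, we will define the envelop as the boundary of the connected component of the complement of the union of the boundary clusters containing the relevant boundary arcs of $\partial^-\Omega$. We then check the following. Replacing the paths inside $\bigcup_\ell B(u_\ell,\delta)$ by any other connected configuration attached at the same points of $\partial B(u_\ell,\delta)$ does not change this component, provided that the component does not enter $\bigcup_\ell B(u_\ell,2\delta)$. The condition $E_{2\delta}$ guarantees that no outermost loop-soup cluster surrounding $u_\ell$ links things inside the small ball to the rest of the domain in a way that the surgery could alter.

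Finally, we bound the complement of the good event. We need that $\PP[G_{\delta,\eps}^c]\to0$ as $\delta\to0$, uniformly in $\eps<\delta$, for both ensembles. For the representation in Theorem~\ref{thm::Brownian_represent}, the event that $\bs{\eta}$ comes within $2\delta$ of $u_\ell$ has vanishing probability. This follows from Theorem~\ref{thm::multiplelog_polygon}: the Radon--Nikodym derivative there is bounded for $\gamma\ge0$, and under $\PP$ the level lines avoid any fixed interior point almost surely. For $\bs{\eta}_\eps$, we use Proposition~\ref{prop::BES_truncated} to identify it with the level lines of $\Gamma_\eps$, and we use a uniform-in-$\eps$ estimate showing that the level lines do not approach the holes. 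I expect this step to be the main obstacle. I would try to obtain the estimate by comparing the FPS $\A_0^{h_\eps}$ with the FPS in $\Omega$, using the monotonicity of first passage sets in the boundary data, since $h_\eps\ge0$.

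Combining these ingredients, $d_{TV}(\bs{\eta}_\eps,\bs{\eta})\le d_{TV}(\Xi_{\delta,\eps},\Xi_\delta)+\PP[G^c_{\delta,\eps}]+\PP[E_{2\delta}^c]$. We let $\eps\to0$ first and then $\delta\to0$, which gives the claim.
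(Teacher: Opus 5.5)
Your overall architecture is the paper's: one loop soup $\LL$ in $\Omega$ with the $\Omega_\eps$-soup as a subfamily, the coupling of $\Xi_{\delta,\eps}$ and $\Xi_\delta$ from Lemma~\ref{lem::abs_cont_exc}, a good event on which both envelops are the same function of the coupled data, then $\eps\to0$ before $\delta\to0$. The gap is in how you control the good event.

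You add to $E_{2\delta}$ the requirement that the envelop avoid $\bigcup_\ell B(u_\ell,2\delta)$. To bound the failure of this for the Brownian envelop, you invoke Theorem~\ref{thm::multiplelog_polygon}. That theorem concerns level lines of the singular GFF. Applying it to the envelop of the Brownian clusters presupposes that this envelop has law $\PP_{\bs{\gamma}}$, which is exactly Theorem~\ref{thm::Brownian_represent}, and the present lemma is an ingredient of its proof. So this step is circular.

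For $\bs{\eta}_\eps$ you leave the uniform estimate open. It could be read off from the Radon--Nikodym argument in the proof of Lemma~\ref{lem::conv_envelop}, but that does not help on the Brownian side. Your justification of the measurability step is also insufficient on its own terms. Loops that dip into $B(u_\ell,2\delta)$ can connect distant clusters differently in the two ensembles without the envelop ever approaching $u_\ell$; examples are loops of $\LL$ hitting $\overline{B(u_\ell,\eps)}$, which are absent from the $\Omega_\eps$-soup. Replacing the inside configuration by another connected one does not address this.

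The missing idea is that $E_\delta$ alone does all the work, through shielding.
\begin{itemize}
\item On $E_\delta$, the outermost cluster $C_\ell$ of $\LL$ surrounding $u_\ell$ has closure in $\Omega_\delta$. So $B(u_\ell,\delta)$ lies in the component $H_\ell$ of $\Omega\setminus\overline{C_\ell}$ containing $u_\ell$.
\item Every other loop meeting $H_\ell$ belongs to a cluster contained in $H_\ell$, hence not outermost. In particular, the outermost clusters of $\LL$ and of the $\Omega_\eps$-soup coincide.
\item Any retained partial trajectory near $u_\ell$ that leaves $H_\ell$ must hit $C_\ell$. So all such trajectories are connected through $C_\ell$, whatever happens inside the ball: junction at $u_\ell$, junction through $\partial B(u_\ell,\eps)$, or extra or missing small loops.
\item Whatever stays in $H_\ell$ is either disconnected from $\partial\Omega$, or enclosed by a boundary-connected $C_\ell$. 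Either way it is invisible from $\partial^-\Omega$.
\end{itemize}
Hence, on $E_\delta$, both envelops are the same function of the outermost clusters of $\LL$ together with $\Xi_\delta$, resp.\ $\Xi_{\delta,\eps}$. As a by-product, the envelop never enters $H_\ell$, so your extra avoidance condition holds automatically on $E_{2\delta}$ and needs no estimate at all. This is the paper's argument, and it gives $\limsup_{\eps\to0}d_{\mathrm{TV}}(\bs{\eta}_\eps,\bs{\eta})\le 2(1-\PP[E_\delta])\to0$ as $\delta\to0$.
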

\begin{proof}
We will denote by $\LL$ the Brownian loop soup in $\Omega$.
Fix $\delta>0$ small enough. 
First, we claim that on the event $E_\delta$ (which depends only on $\LL$),
the envelop $\bs{\eta}$ is measurable with respect to only
$(\LL,\Xi_{\delta})$, i.e. with excursions inside $\Omega\setminus \Omega_\delta$ forgotten.
Indeed, for the construction of $\bs{\eta}$, only the outermost clusters of $\LL$ matter,
and on the event $E_\delta$, no outermost cluster of $\LL$ intersects $\Omega\setminus \Omega_\delta$.
Similarly, for $\eps\in (0,\delta)$, on the event $E_\delta$,
$\bs{\eta}_{\eps}$ is measurable with respect to only
$(\LL,\Xi_{\delta,\eps})$.
Then, the convergence in total variation of $\Xi_{\delta,\eps}$ to $\Xi_{\delta}$
(Lemma \ref{lem::abs_cont_exc}),
ensures that the $\limsup$ of the total variation distance between $\bs{\eta}_{\eps}$ and $\bs{\eta}$
is at most $2(1-\PP(E_\delta))$.
This in turn tends to $0$ as $\delta\to 0$.
\end{proof}

\begin{lemma}\label{lem::conv_envelop}
Recall that we denote by $\bs{\eta}_{\eps}$ be the envelop in Proposition~\ref{prop::BES_truncated}. 
The law of $\bs{\eta}_{\eps}$ converges as $\eps\to 0$ and the limit has the same law as $\PP_{\bs{\gamma}}$ in Theorem~\ref{thm::multiplelog_polygon}. 
Actually the convergence holds in total variation.
\end{lemma}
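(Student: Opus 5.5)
By Proposition~\ref{prop::BES_truncated}, $\bs{\eta}_\eps$ has the law of the union of the generalized level lines of $\Gamma_\eps$, the GFF in $\Omega_\eps$ with boundary data $h_\eps$. By Lemma~\ref{lem::conv_envelop_1}, $\bs{\eta}_\eps$ already converges in total variation to the envelop $\bs{\eta}$ of Theorem~\ref{thm::Brownian_represent}. It therefore remains to identify the limit with $\PP_{\bs{\gamma}}$. For this I would show directly that the level lines of $\Gamma_\eps$ converge in total variation to the level lines of the field~\eqref{eqn::multiplelog_GFF_polygon}.

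The plan is to compare, on a fixed region away from the singularities, the law of $\Gamma_\eps$ with that of $\Phi:=\Gamma+\sum_\ell\gamma_\ell\Green(\Omega;u_\ell,\cdot)$.

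Fix $\delta>0$ and let $U_\delta:=\Omega\setminus\cup_\ell \overline{B(u_\ell,\delta)}$.
\begin{itemize}
\item By the Markov property, $\Phi|_{U_\delta}$ is a zero-boundary GFF in $U_\delta$ plus an independent harmonic function. This harmonic function has boundary values $\Gamma_0$ on $\partial\Omega$, and on $\cup_\ell\partial B(u_\ell,\delta)$ it takes the (Gaussian) values of $\Phi$.
\item Similarly, $\Gamma_\eps|_{U_\delta}$ is the same zero-boundary GFF in $U_\delta$ plus a harmonic extension of the values of $\Gamma_\eps$ on $\cup_\ell\partial B(u_\ell,\delta)$.
\item These boundary traces are Gaussian fields on the circles. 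In both cases the mean is $\Gamma_0+\sum_\ell\gamma_\ell\Green(\Omega;u_\ell,\cdot)$ plus a correction that is harmonic in $\Omega_\eps$. This correction is bounded on the circles of radius $\delta$ and tends to $0$ uniformly as $\eps\to0$. The covariance is $\Green(\Omega_\eps;\cdot,\cdot)$ versus $\Green(\Omega;\cdot,\cdot)$, restricted to $\cup_\ell\partial B(u_\ell,\delta)$.
\item The difference $\Green(\Omega;z,w)-\Green(\Omega_\eps;z,w)$ for $z,w$ at distance $\ge\delta$ is of order $1/\log(1/\eps)$ and is smooth. Hence, after projecting to the harmonic functions in $U_\delta$ (which is all that matters for the field in $U_\delta$), the two Gaussian laws are mutually absolutely continuous, by Feldman--H\'ajek / Cameron--Martin. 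The Radon--Nikodym derivative tends to $1$ in $L^1$.
\end{itemize}
Hence the law of $\Gamma_\eps|_{U_\delta}$ converges in total variation to that of $\Phi|_{U_\delta}$.

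Next I transfer this to the level lines via locality. On the event that all level lines stay in $U_\delta$, the level lines of $\Phi$ are determined by $\Phi|_{U_\delta}$. The same holds for the generalized level lines of $\Gamma_\eps$, by locality of level lines (local sets, as in~\cite{MillerSheffieldIG1, WangWuLevellinesGFFI}, and for generalized level lines~\cite{AruSepulvedaWernerBTLStwoDGFF}). Note that "not reaching $\cup_\ell B(u_\ell,\delta)$" is a stopping condition detected by the restricted field. So the total variation distance between the two laws of level lines is at most
\[
\|\mathrm{law}(\Gamma_\eps|_{U_\delta})-\mathrm{law}(\Phi|_{U_\delta})\|_{TV}+\PP_{\bs{\gamma}}\bigl[\bs{\eta}\cap\cup_\ell B(u_\ell,\delta)\neq\emptyset\bigr].
\]
The second term tends to $0$ as $\delta\to0$, since the level lines of $\Phi$ a.s.\ avoid the $u_\ell$ (by absolute continuity from Theorem~\ref{thm::multiplelog_polygon} and the $\bs{\gamma}=0$ case). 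Letting $\eps\to0$ and then $\delta\to0$ gives convergence in total variation of $\bs{\eta}_\eps$ to $\PP_{\bs{\gamma}}$. Combined with Lemma~\ref{lem::conv_envelop_1}, this identifies the two limits.

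The main obstacle is making the locality step rigorous for generalized level lines in the non-simply-connected domain $\Omega_\eps$. In particular, one must check that the level lines of $\Gamma_\eps$ started from $x_{2j-1}$ are stopping-time determined by the field in $U_\delta$ until they first hit $\partial B(u_\ell,\delta)$, with no boundary effect from the holes before that time. I would try to handle this by coupling $\Gamma_\eps$ and $\Phi$ to agree on $U_\delta$ on a high-probability event (maximal coupling from the total variation bound). I would then argue, using the uniqueness of level lines as local sets with prescribed boundary values, that the explorations coincide up to the exit time from $U_\delta$.
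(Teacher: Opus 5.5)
Your proposal is correct and follows essentially the same route as the paper. Both compare the fields away from the singularities through a Gaussian change of measure whose density tends to $1$ in $L^1$, driven by the smooth covariance difference $\Green(\Omega;\cdot,\cdot)-\Green(\Omega_\eps;\cdot,\cdot)$; both transfer this to the level lines on the event $\{\bs{\eta}\subset\Omega_\delta\}$ because the level lines are determined by the field, and then send $\delta\to0$ using Theorem~\ref{thm::multiplelog_polygon}. The differences are technical only: the paper realizes the Gaussian step by an explicit coupling $\hat\Gamma|_{\Omega_\eps}=\Gamma_\eps+h_\eps$ with a cutoff $\psi_\delta$ and an eigenvalue bound rather than Feldman--H\'ajek on circle traces, it takes the locality step you flag for granted from measurability, and your appeal to Lemma~\ref{lem::conv_envelop_1} is not needed for this lemma.
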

\begin{proof}
Denote by $\hat\Gamma$ the $\GFF$ with logarithmic singularities defined in~\eqref{eqn::multiplelog_GFF_polygon}. For every $0<\eps<\delta$, the law of $\hat\Gamma|_{\Omega_\delta}$ is absolutely continuous with respect to $\Gamma_\eps|_{\Omega_\delta}$ and the Radon-Nikodym derivative can be described as follows. By the domain Markov property of $\GFF$, we may construct a coupling $\QQ$ of $\hat\Gamma$ and $\Gamma_\eps$, such that under $\QQ$ we have
\begin{equation}\label{eqn::dec}
\hat\Gamma|_{\Omega_\eps}=\Gamma_\eps|_{\Omega_\eps}+h_\eps,
\end{equation}
where $h_\eps$ is harmonic on $\Omega_\eps$, and $h_\eps$ and $\Gamma_\eps|_{\Omega_\eps}$ are independent. Choose a nonnegative smooth function $\psi_\delta$ on $\Omega$ such that $\psi_\delta|_{\cup_{1\le\ell\le N}B(u_\ell,\delta/2)}=0$ and  $\psi_\delta|_{\Omega_\delta}=1$. Then,  we have (for instance, see~\cite[Proposition 1.51]{BerestyckiPowellGFFandLQG})
\[R_{\eps,\delta}:=\frac{\ud \hat\Gamma|_{\Omega_\delta}}{\ud \Gamma_\eps|_{\Omega_\delta}}=\exp\left((\Gamma_\eps,\psi_\delta h_\eps)_{\nabla}-\frac{1}{2}(\psi_\delta h_\eps,\psi_\delta h_\eps)_{\nabla}\right).\]
We will prove that
\begin{equation}\label{eqn::RN_boundconv}
\lim_{\eps\to 0}\QQ[|R_{\eps,\delta}-1|]=0.
\end{equation} 

Assume~\eqref{eqn::RN_boundconv} is true. By~\cite[Lemma 15]{AruSepulvedaWernerBTLStwoDGFF}, the (generalized) level lines $\bs {\eta}_{\eps}$ (resp. $\bs {\eta}$) is measurable with respect to $\Gamma_\eps$ (resp. $\Gamma$). Thus, suppose $F$ is a bounded and measurable function on curve space, we have
\begin{equation}
\QQ[F(\bs {\eta}_{\eps})\one_{\{\bs {\eta}_{\eps}\subset\Omega_\delta\}}]=\QQ[F(\bs {\eta})R_{\eps,\delta}\one_{\{\bs {\eta}\subset\Omega_\delta\}}].
\end{equation}
Taking $F\equiv 1$, by~\eqref{eqn::RN_boundconv}, we have
\[
\lim_{\eps\to 0}\QQ[\bs {\eta}_{\eps}\subset\Omega_\delta]=\QQ[\bs {\eta}\subset\Omega_\delta].
\]
Combining with Theorem~\ref{thm::multiplelog_polygon}, we have
\begin{equation}\label{eqn::nonhit1}
\lim_{\delta\to 0}\QQ[\bs {\eta}\not\subset\Omega_\delta]=0.
\end{equation}
Combining~\eqref{eqn::RN_boundconv}--\eqref{eqn::nonhit1} together, by first letting $\eps\to 0$ and then letting $\delta\to 0$, we have
\[\lim_{\eps\to 0}\QQ[F(\bs {\eta}_{\eps})]=\QQ[F(\bs {\eta})].\]
This completes the proof.

It remains to prove~\eqref{eqn::RN_boundconv}. Given $h_\eps$, the conditional law of $(\Gamma_\eps,\psi_\delta h_\eps)_{\nabla}$ is  $\LN(0,(\psi_\delta h_\eps,\psi_\delta h_\eps)_{\nabla})$. Thus, we have
\[\QQ[|R_{\eps,\delta}-1|^2]=\QQ[\exp\left((\psi_\delta h_\eps,\psi_\delta h_\eps)_{\nabla}\right)-1].\]
It suffices to prove that 
\begin{equation}\label{eqn::RNL2}
\lim_{\eps\to 0}\QQ[\exp\left((\psi_\delta h_\eps,\psi_\delta h_\eps)_{\nabla}\right)]=1.
\end{equation}
Since $h_\eps\in H_0^1(\Omega)$ is harmonic, we have
\[h_\eps(z)=\int_{\cup_{1\le\ell\le N}\partial B(u_\ell,\delta/4)}\Poisson(\Omega_{\delta/4};z,w)h_\eps(w)|\ud w|,\quad \text{for }z\in\Omega_{\delta/2}.\] 
Thus, there exists a constant $C>0$ which depends on $\Omega$ and $\delta$, such that 
\begin{equation}\label{eqn::compar}
0\le (\psi_\delta h_\eps,\psi_\delta h_\eps)_{\nabla}\le C\int_{\cup_{1\le\ell\le N}\partial B(u_\ell,\delta/4)} h^2_\eps(w)|\ud w|.
\end{equation}
For $w,s\in \cup_{1\le\ell\le N}\partial B(u_\ell,\delta/4)$, define the covariance function by
\begin{equation}\label{eqn::covariance}
K_\eps(w,s):=\Green(\Omega;w,s)-\Green(\Omega_\eps;w,s).
\end{equation}
Note that $K_\eps$ is a continuous function. In particular, it is a compact operator on $L^2=L^2(\cup_{1\le\ell\le N}\partial B(u_\ell,\delta/4))$. For $k\ge 1$, suppose $\phi^\eps_k\in L^2$ with $||\phi^\eps_k||_{L^2}=1$ and $\lambda^\eps_k\ge 0$ satisfy
\[\int_{\cup_{1\le\ell\le N}\partial B(u_\ell,\delta/4)}K_\eps(\cdot,w)\phi^\eps_k(w)|\ud w|=\lambda^\eps_k\phi^\eps_k(\cdot).\]
By definition, we have
\[\sum_{k\ge 1}(\lambda_k^\eps)^2=\int_{\cup_{1\le\ell\le N}\partial B(u_\ell,\delta/4)}\int_{\cup_{1\le\ell\le N}\partial B(u_\ell,\delta/4)}K(w,s)|\ud w||\ud s|.\]
Combining with~\eqref{eqn::covariance}, we have
\begin{equation}\label{eqn::tracezero}
\lim_{\eps\to 0}\sum_{k\ge 1}(\lambda_k^\eps)^2=0.
\end{equation}
Since $h_\eps$ is continuous on $\cup_{1\le\ell\le N}\partial B(u_\ell,\delta/4)$, we have the following decomposition
\[h_\eps=\sum_{k\ge 1}\lambda^\eps_k Z^\eps_k\phi^\eps_k.\]
By definition, we have that $\{Z^\eps_k\}_{k\ge 1}$ is a family of independent $\LN(0,1)$ variables. This implies that
\[\int_{\cup_{1\le\ell\le N}\partial B(u_\ell,\delta/4)} h^2_\eps(w)|\ud w|=\sum_{k\ge 1}(\lambda^\eps_k)^2(Z^\eps_k)^2.\]
Thus, we have
\begin{equation}\label{eqn::explicitL2}
\QQ\left[\exp\left(C\int_{\cup_{1\le\ell\le N}\partial B(u_\ell,\delta/4)} h^2_\eps(w)|\ud w|\right)\right]=\prod_{k\ge 1}\left(1-2C(\lambda_k^\eps)^2\right)^{1/2}.
\end{equation}
Combining~\eqref{eqn::compar},~\eqref{eqn::tracezero} and~\eqref{eqn::explicitL2} together, we obtain~\eqref{eqn::RNL2}. This completes the proof.
\end{proof}
Now, we come back to the proof of Theorem~\ref{thm::Brownian_represent}.
\begin{proof}[Proof of Theorem~\ref{thm::Brownian_represent}]
This simply follows by combining Lemma~\ref{lem::conv_envelop_1}, Lemma~\ref{lem::conv_envelop}, 
and Proposition \ref{prop::BES_truncated}.
\end{proof}

\begin{remark}
The Brownian representation show that when all the $\gamma_\ell$-s are non-negative, 
the subdomain of $\Omega$ delimited by
$\partial^{+}\Omega$ and $\bs {\eta}$ is non-decreasing by inclusion
with respect to each of the $\gamma_\ell$-s: there are more Brownian trajectories when
$\gamma_\ell$ increases.
This monotonicity actually also holds without the condition of $\gamma_\ell$-s being non-negative.
This follows from the monotonicity of the first passage sets;
see \cite[Proposition 4.5]{AruLupuSepulvedaFPSGFF}.
\end{remark}
\section{Asymptotic}
\label{sec::asymp}
In this section, we prove Proposition~\ref{prop::asymp}. 
We will prove the conclusion in the unit disc and then transform it to general polygon via conformal covariance. 
The conclusion in the unit disc is the following.

\begin{proposition}\label{prop::asymp_forall_U}
Fix $N\ge 1$ and $\bs{\theta}=(\theta_1, \ldots, \theta_{2N})\in\LX_{2N}$. We write $x_j=\ee^{\ii\theta_j}$ for $1\le j\le 2N$. 
Suppose $\Gamma$ is GFF in $(\U; \bs{x})$ with alternating boundary data~\eqref{eqn::multiplelog_polygon_boundarydata}. Fix $\gamma\in\R$ and consider the level lines of the field
$\Gamma+\gamma\log|\cdot|$. 
Let $\eta^j$ be the level line starting from $x_{2j-1}$ for $1\le j\le N$. Then 
\begin{align}\label{eqn::asymp_forall_U}
\PP_{\gamma}\left[\forall j\in \{1, \ldots, N\}, \dist(\eta^j, 0)\le \eps\right]\sim \eps^{\frac{N^2+\gamma^2}{2}}\LZradial(\bs{\theta})/\LZ_{\GFF}^{(\gamma)}(\bs{\theta}); 
\end{align} 
where the constant in $\sim$ depends on $(N, \gamma)$, and  
\begin{align}\label{eqn::multiradial_kappa4_pf}
\LZradial(\bs{\theta})=\prod_{1\le i<j\le 2N}\left(\sin\left(\frac{\theta_j-\theta_i}{2}\right)\right)^{\frac{1}{2}}; 
\end{align}
and
\begin{align}\label{eqn::GFFonelog_pf}
\LZ_{\GFF}^{(\gamma)}(\bs{\theta})=\prod_{1\le i<j\le 2N}\left(\sin\left(\frac{\theta_j-\theta_i}{2}\right)\right)^{\frac{1}{2}(-1)^{j-i}}\exp\left(\gamma\sum_{j=1}^N(\theta_{2j}-\theta_{2j-1})\right). 
\end{align}
\end{proposition}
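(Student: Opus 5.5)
We prove Proposition~\ref{prop::asymp_forall_U} by reducing the probability under $\PP_{\gamma}$ to an expectation under $\PP=\PP_0$, and then evaluating that expectation with a multi-time martingale in the common-time parametrization.

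\emph{Step 1: change of measure.} By Theorem~\ref{thm::multiplelog_polygon} with $M=1$ and $u=0$, we have
\[
\PP_{\gamma}[E_\eps]=\E\Big[\CR(\U\setminus\bs{\eta};0)^{\gamma^2/2}\,\big(\one\{0\in\U^+_{\bs\eta}\}\ee^{\pi\gamma}+\one\{0\in\U^-_{\bs\eta}\}\big)\,\ee^{-\gamma\Gamma_0(0)}\,\one_{E_\eps}\Big].
\]
Here $E_\eps=\{\forall j,\ \dist(\eta^j,0)\le\eps\}$, the field $\gamma\log|\cdot|$ corresponds to $-\gamma\Green$, and the sign is absorbed into $\gamma$. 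On $E_\eps$ the conformal radius satisfies $\CR(\U\setminus\bs\eta;0)\asymp\eps$, and we need the sharp constant. We will not factor the problem as ``$\eps^{\gamma^2/2}\times\PP[E_\eps]$''. Instead we treat the whole weighted expectation directly.

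\emph{Step 2: multi-time martingale in common time.} Grow all $N$ level lines $\eta^{1},\dots,\eta^{N}$ simultaneously from $x_{1},x_3,\dots,x_{2N-1}$, in the radial parametrization with common capacity time $t$, so that $\CR(\U\setminus\bs\eta_t;0)=\ee^{-t}$. Follow~\cite{HealeyLawlerNSidedRadialSLE, HuangPeltolaWuMultiradialSLE}. Under $\PP_\gamma$, the local martingale observable for each curve is the partition function $\LZ_{\GFF}^{(\gamma)}$ of Lemma~\ref{lem::multiple_log_mart}, transported to $\U$ with the interior point at $0$. In the angular coordinates $\bs\theta_t$, including the tips, this is the function~\eqref{eqn::GFFonelog_pf} times $|g_t'(0)|^{-\gamma^2/2}$.

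Next consider the $\kappa=4$ multiradial partition function $\LZradial(\bs\theta)$, in which all $2N$ points are pairwise aggregated toward $0$. The key claim is that
\[
\ee^{\frac{N^2+\gamma^2}{2}t}\,\frac{\LZradial(\bs\theta_t)}{\LZ_{\GFF}^{(\gamma)}(\bs\theta_t)}
\]
is a $\PP_\gamma$-local martingale in common time. We verify this by Itô calculus. $\LZradial$ satisfies the radial BPZ system with eigenvalue corresponding to exponent $N^2/2$, and the extra $\gamma^2/2$ comes from the factor $|g_t'(0)|^{-\gamma^2/2}=\ee^{-\gamma^2 t/2}$. The new part is checking that the drift term $\gamma\sum_j(\theta_{2j}-\theta_{2j-1})$, i.e.\ the term $\gamma\Gamma_0$, cancels correctly. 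Weighting $\PP_\gamma$ by this martingale gives the $N$-radial $\SLE_4$ aimed at $0$. Its angle process is an ergodic diffusion on $\LX_{2N}$ with an explicit stationary density.

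\emph{Step 3: asymptotics.} Up to constants, $E_\eps$ agrees with the event that the common-time process survives, i.e.\ no curve is swallowed or reaches its endpoint, until time $t=\log(1/\eps)$. We will need the standard comparison between the distance event and the conformal-radius event to replace one by the other. Then
\[
\PP_\gamma[E_\eps]=\eps^{\frac{N^2+\gamma^2}{2}}\frac{\LZradial(\bs\theta)}{\LZ_{\GFF}^{(\gamma)}(\bs\theta)}\;\E^{*}\!\left[\frac{\LZ_{\GFF}^{(\gamma)}(\bs\theta_t)}{\LZradial(\bs\theta_t)}\right],
\]
where $\E^*$ denotes expectation under the multiradial measure. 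By exponential convergence to the invariant law of the angle diffusion, the last expectation tends to a positive finite constant depending only on $(N,\gamma)$.

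\emph{Main obstacle.} The hardest step is the last one. We must show that the ratio $\LZ_{\GFF}^{(\gamma)}/\LZradial$ is integrable against the invariant density, and control the boundary behaviour where two angles merge; there the ratio blows up like $\sin^{-1/2}$. We also need the uniform-in-time ergodic estimates. I would first try to import the transition-density estimates of~\cite{HuangPeltolaWuMultiradialSLE}. Those estimates are for a Dyson-type diffusion, and the $\gamma$-drift $\ee^{\gamma\sum(\theta_{2j}-\theta_{2j-1})}$ is bounded, so it should not affect them. A secondary technical point is the comparison between $E_\eps$ and the common-time survival event, which we handle by Koebe's estimate~\eqref{eqn::CR_Euc}.
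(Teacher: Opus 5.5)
Your skeleton (take the ratio of the $\LZ_{\GFF}^{(\gamma)}$- and $\LZradial$-martingales in a common-time parametrization, identify the tilted angle process, and use its exponential mixing) is the one the paper uses. However, growing only the $N$ curves from $x_1,x_3,\dots,x_{2N-1}$ breaks the step that carries all the weight.

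With that choice your angle process in $\LX_{2N}$ consists of $N$ tips driven by $N$ Brownian motions, plus $N$ even boundary points $\covmap_t(\theta_{2j})$ that move only through the Loewner flow. Your ratio is still a local martingale: the even points enter both multi-time martingales only through the same factors $\covmap'(\theta_{2j})^{1/4}$, which cancel. But tilting by it produces the law of the odd-indexed curves of $2N$-radial $\SLE_4$, not ``$N$-radial $\SLE_4$'' (which would have exponent $N^2/8$). The resulting $2N$-dimensional angle diffusion is degenerate. It is not circular Dyson Brownian motion, no explicit stationary density is available for it, and the transition-density estimates of \cite{HealeyLawlerNSidedRadialSLE, HuangPeltolaWuMultiradialSLE} do not apply. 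So your claim that $\E^*[\LZ_{\GFF}^{(\gamma)}(\bs\theta_t)/\LZradial(\bs\theta_t)]$ converges to a positive constant is unsupported. Proving it would require a separate ergodic theory for this hypoelliptic process.

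The missing idea is to grow $2N$ arms. Regard $\eta^j$ for even $j$ as the level line of $-\Gamma-\gamma\log|\cdot|$ started from $x_j$, as in Proposition~\ref{prop::multitime_mart_two}(3); then $\PP_\gamma=\mathsf{P}_{2N}(\LZ_{\GFF}^{(\gamma)})$ up to the collision time $T$. In common time, where $\mathfrak{g}_t'(0)=\ee^{2Nt}$, the ratio of martingales is $\ee^{-(N^2+\gamma^2)Nt}\LZ_{\GFF}^{(\gamma)}(\bs\mslitdriv_t)/\LZradial(\bs\mslitdriv_t)$. Under $\mathsf{P}_{2N}(\LZradial)$ all $2N$ driving angles solve $\ud\mslitdriv^j_t=2\ud B^j_t+2\sum_{i\neq j}\cot\big((\mslitdriv^j_t-\mslitdriv^i_t)/2\big)\ud t$. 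This process has transition density $\mathsf{p}(\bs\theta')(1+O(\ee^{-at}))$ with $\mathsf{p}\propto\prod_{i<j}\sin^2((\theta_j-\theta_i)/2)$ (Lemma~\ref{lem::HL21}). Integrating the killed density of Lemma~\ref{lem::transition_density_gff} then gives $\PP_\gamma[T>t]$.

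Some smaller corrections:
\begin{itemize}
\item The ratio $\LZ_{\GFF}^{(\gamma)}/\LZradial$ equals $\prod_{j-i\ \text{odd}}\sin((\theta_j-\theta_i)/2)^{-1}$ times a bounded factor. So it blows up like $\sin^{-1}$, not $\sin^{-1/2}$, though it is still integrable against $\mathsf{p}$.
\item Your Step~1 is never used, and its claim $\CR(\U\setminus\bs\eta;0)\asymp\eps$ on $E_\eps$ is false; only the upper bound holds.
\item Koebe's estimate only gives two-sided comparisons between $\{T>t\}$ and $E_\eps$, which yields $\asymp$. The relation $\sim$ requires the ratio to converge. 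The paper also defers this last passage to finer arguments in the literature.
\end{itemize}
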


The main ingredients in the proof of Proposition~\ref{prop::asymp_forall_U} are tools  developed recently from~\cite{HealeyLawlerNSidedRadialSLE, HuangPeltolaWuMultiradialSLE}. 
We introduce the first key tool ``multi-time martingale" from~\cite{HuangPeltolaWuMultiradialSLE} in Section~\ref{subsec::multitime_mart} and then recall the second key tool ``common-time parameter" from~\cite{HealeyLawlerNSidedRadialSLE} in Section~\ref{subsec::DysonBM}. We complete the proof of Proposition~\ref{prop::asymp_forall_U} in Section~\ref{subsec::asymp}. 

\subsection{Multi-time martingale}
\label{subsec::multitime_mart}
\paragraph*{Radial Loewner chain.}
For a simply connected domain $\Omega\subsetneq\C$ and $z\in\Omega$, the conformal radius of $\Omega$ seen from $z$ is defined by $\CR(\Omega; z):=1/\varphi'(z)$ where $\varphi:\Omega\to \U$ is the conformal map such that $\varphi(z)=0$ and $\varphi'(z)>0$. In particular, $\CR(\U; 0)=1$. 
A $\U$-hull is a relatively closed subset $K$ of $\U$ such that $\U\setminus K$ is simply connected and containing $0$. 
Let $\mathfrak{g}_K$ be
the conformal map from $\U\setminus K$ onto $\U$ satisfying the normalization $\mathfrak{g}_K(0)=0$ and $\mathfrak{g}'_K(0)=\CR(\U\setminus K; 0)^{-1}>0$. We refer to $\mathfrak{g}_K$ as the mapping-out function of $K$. 

Fix $\theta\in \R$ and $T \in (0,\infty]$. Let $\xi: [0,T) \to \R$ be a continuous function with $\xi_0 = \theta$.
A radial Loewner chain driven by $\xi$ is a family of $\U$-hulls $(K_t)_{t\in [0,T)}$ such that the mapping-out function $\mathfrak{g}_t:=\mathfrak{g}_{K_t}$ satisfies the radial Loewner equation
\begin{align*}
	\partial_t \mathfrak{g}_t(z)=\mathfrak{g}_t(z) \frac{\ee^{\ii \xi_t}+\mathfrak{g}_t(z)}{\ee^{\ii \xi_t}-\mathfrak{g}_t(z)}, \qquad \mathfrak{g}_0(z)=z, \qquad z\in \overline{\U}.
\end{align*}
It is convenient to use the covering map
$\covmap_t \colon \R \to \R$ 
of $\mathfrak{g}_t$ defined via $\mathfrak{g}_t(\ee^{\ii \theta}) = \exp( \ii \covmap_t(\theta) )$. The covering map satisfies the radial Loewner equation
\begin{align*}\label{eqn::single_radial_Loewner_equation_cov}
	\partial_t \covmap_t(\theta) = \cot \bigg(\frac{\covmap_t(\theta) - \xi_t}{2}\bigg) , \qquad \covmap_0(\theta)=\theta.
\end{align*}

%
Fix $\kappa>0$ and $\theta\in\R$, radial $\SLE_{\kappa}$ in $(\U; \ee^{\ii\theta}; 0)$ is the Loewner chain driven by $\xi_t=\theta+\sqrt{\kappa}B_t$ where $B_t$ is standard one-dimensional Brownian motion. When $\kappa\le 4$, it is almost surely generated by a continuous simple curve $\eta$ and it is transient: $\lim_{t\to\infty}\eta_t=0$, see~\cite[Theorem~1.3]{LawlerContinuityofradialSLE}. 

\paragraph*{Multi-time parameter.}
Fix $n\ge 1$ and $\bs{\theta}=(\theta_1,\ldots,\theta_n)\in \LX_n$. 
Consider an $n$-tuple \[\bs{\eta}_{[\bs{0},\bs{t}]}=(\eta^1_{[0,t_1]}, \ldots, \eta^n_{[0,t_n]})\] of simple curves in the polygon $(\U;\ee^{\ii\bs{\theta}};0)$ parameterized by $\bs{t}=(t_1, \ldots, t_n)\in[0,\infty)^n$, such that $\eta_0^j=\ee^{\ii \theta_j}$ and the segments $\{ \eta^{j}_{[0,t_j]}\}_{1\le j\le n}$ are disjoint. 
We define the following normalized conformal transformations:
\begin{itemize}
	\item $\mathfrak{g}_{t_j}^j$ is the conformal map from $\U\setminus\eta^j_{[0,t_j]}$ onto $\U$ with $\mathfrak{g}_{t_j}^j(0)=0$ and $(\mathfrak{g}_{t_j}^j)'(0)>0$, where $1\le j\le n$.
	\item $\mathfrak{g}_{\bs{t}}$ is the conformal map from $\U\setminus\cup_{j=1}^n \eta^j_{[0,t_j]}$ onto $\U$ with $\mathfrak{g}_{\bs{t}}(0)=0$ and $\mathfrak{g}_{\bs{t}}'(0)>0$.
	\item $\mathfrak{g}_{\bs{t}, j}$ is the conformal map from $\U \setminus \mathfrak{g}_{t_j}^j\left(\cup_{i\neq j}\eta^i_{[0,t_i]}\right)$ onto $\U$ with $\mathfrak{g}_{\bs{t}, j}(0)=0$ and $\mathfrak{g}'_{\bs{t}, j}(0)> 0$, where $1\le j\le n$. 
\end{itemize}
Using these notations, we have $\mathfrak{g}_{\bs{t}}=\mathfrak{g}_{\bs{t}, j}\circ \mathfrak{g}_{t_j}^j$ for $1\le j\le n$. 
Note that multi-time parameter is well-defined up to the collision time which is the first time when any two curves in $\bs{\eta}$ meet. 
Let $\covmap_{t_j}^{j},\covmap_{\bs{t}},\covmap_{\bs{t},j}$ be the covering maps of $\mathfrak{g}_{t_j}^{j},\mathfrak{g}_{\bs{t}},\mathfrak{g}_{\bs{t},j}$, respectively. 
We say that the $n$-tuple $\bs{\eta}_{\bs{t}}$ of curves has $n$-time parameter if $g^{j}$ is parameterized by capacity:
\begin{equation*}
	(\mathfrak{g}_{t_j}^j)'(0)= \ee^{t_j},\quad \text{ for }1\leq j \leq n.
\end{equation*}
Denote by $\xi^j$ the driving function of each $\eta^j$, and we define the driving function of the $n$-tuple $\bs{\eta}_{\bs{t}}$, started at $\bs{\mslitdriv}_0=(\theta_1, \ldots, \theta_n)\in\LX_n$, by
\begin{equation*}
	\bs{\mslitdriv}_{\bs{t}}=(\bs{\mslitdriv}^1_{\bs{t}}, \ldots, \bs{\mslitdriv}^n_{\bs{t}}), \qquad \text{with }\bs{\mslitdriv}^j_{\bs{t}}=\covmap_{\bs{t}, j}(\xi^j_{t_j}), \qquad \text{for }1\le j\le n.
\end{equation*}
The mapping-out function $\mathfrak{g}_{\bs{t}}$ solves the Loewner equation
\begin{equation*}
	\ud \mathfrak{g}_{\bs{t}}(z)=\sum_{j=1}^n \partial_{t_j}\mathfrak{g}_{\bs{t}}(z)\ud t_j=\mathfrak{g}_{\bs{t}}(z)\sum_{j=1}^n(\covmap'_{\bs{t},j}(\xi_{t_j}^j))^2\left(\frac{\ee^{\ii\mslitdriv_{\bs{t}}^{j}}+\mathfrak{g}_{\bs{t}}(z)}{\ee^{\ii\mslitdriv_{\bs{t}}^{j}}-\mathfrak{g}_{\bs{t}}(z)} \right)\ud t_j. 
\end{equation*}

\paragraph*{Multi-time local martingale.}
Fix $\kappa>0$ and we use the following CFT/SLE parameters indexed by $\kappa > 0$:
\begin{align}\label{eqn::universal_parameters}
	\mathfrak{b} := \frac{6-\kappa}{2\kappa} ,\qquad 
	\tilde{\mathfrak{b}} := \frac{(6-\kappa)(\kappa-2)}{8\kappa} , 
	\qquad \textnormal{and} \qquad 
	\mathfrak{c} := \frac{(6-\kappa)(3\kappa-8)}{2\kappa}.
\end{align}
Fix $n\ge 2$ and $\bs{\theta}=(\theta_1, \ldots, \theta_n)\in\LX_n$. 
For each $j\in \{1,\ldots, n\}$, let $\eta^{j}$ be radial $\SLE_{\kappa}$ in $(\U; \ee^{\ii\theta_j}; 0)$ and let $\mathsf{P}_n$ be the probability measure on $\bs{\eta}=(\eta^{1}, \ldots, \eta^{n})$ under which the curves are independent. 
We parameterize $\bs{\eta}$ by $n$-time parameter $\bs{t}$, and let $\bs{\mslitdriv}_{\bs{t}} = (\mslitdriv_{\bs{t}}^{1},\ldots,\mslitdriv_{\bs{t}}^{n})$ be the multi-slit driving function.
For a function $\LZ \colon \LX_n\to \R_{>0}$ and a constant $\aleph\in\R$, define 
\begin{align}\label{eqn::multitime_mart_radial}
	\begin{split}
		M_{\bs{t}}(\LZ)
		:= \; & \one_{\LE_{\emptyset}(\bs{\eta}_{\bs{t}})} \, \exp\bigg(\frac{\mathfrak{c}}{2}\bm_{\bs{t}} -\tilde{\mathfrak{b}}\sum_{j=1}^{n}t_j\bigg) 
		\; (\mathfrak{g}_{\bs{t}}'(0))^{\tilde{\mathfrak{b}}-\aleph} 
		\Big(\prod_{j=1}^{n} \covmap_{\bs{t},j}'(\xi_{t_j}^{j}) \Big)^{\mathfrak{b}}
		\LZ(\bs{\mslitdriv}_{\bs{t}})
	\end{split}
\end{align}
where $\LE_{\emptyset}(\bs{\eta}_{\bs{t}}) = \{ \eta_{[0,t_j]}^{j} \cap \eta_{[0,t_i]}^{i}=\emptyset, \, \forall i\neq j\}$ is the event that different curves are disjoint, and 
$\bm_{\bs{t}}$ is the unique potential solving the exact differential equation \textnormal{(}see~\cite[Lemma~2.6]{HuangPeltolaWuMultiradialSLE}\textnormal{):} 
\begin{align*}
	\ud  \bm_{\bs{t}} 
	= \sum_{j=1}^{n} \partial_{t_j} \bm_{(t_1,\ldots,t_p)} \ud t_j
	= \sum_{j=1}^{n} \Big( \! -\frac{1}{3} (\LS \covmap_{\bs{t},j})(\xi_{t_j}^{j})+\frac{1}{6} \big(1- (\covmap'_{\bs{t},j}(\xi_{t_j}^{j}))^2 \big) \Big) \ud t_j, 
	\qquad \bm_{\bs{0}} = 0,
\end{align*}
with $\LS\covmap=\frac{\covmap'''}{\covmap'}-\frac{3}{2}\big(\frac{\covmap''}{\covmap'}\big)^2$ denoting the Schwarzian derivative of a function $\covmap$.
The proof of Proposition~\ref{prop::asymp_forall_U} relies on two multi-time local martingales that we summarize in Proposition~\ref{prop::multitime_mart_two}
\begin{proposition}\label{prop::multitime_mart_two}
Fix $N\ge 1$ and $\bs{\theta}=(\theta_1, \ldots, \theta_{2N})\in\LX_{2N}$. 
For each $j\in \{1,\ldots, 2N\}$, let $\eta^{j}$ be radial $\SLE_{\kappa}$ in $(\U; \ee^{\ii\theta_j}; 0)$ and let $\mathsf{P}_{2N}$ be the probability measure on $\bs{\eta}=(\eta^{1}, \ldots, \eta^{2N})$ under which the curves are independent. 
We parameterize $\bs{\eta}$ by $2N$-time parameter $\bs{t}$, and let $\bs{\mslitdriv}_{\bs{t}} = (\mslitdriv_{\bs{t}}^{1},\ldots,\mslitdriv_{\bs{t}}^{2N})$ be the multi-slit driving function.
\begin{itemize}
\item[(1)]
The following process is $2N$-time local martingale under $\mathsf{P}_{2N}$ up to any collision time: 
\begin{align}\label{eqn::multiradial_kappa4_mart}
M_{\bs{t}}(\LZradial)=\one_{\LE_{\emptyset}(\bs{\eta}_{\bs{t}})} \, \exp\bigg(\frac{1}{2}\bm_{\bs{t}} -\frac{1}{8}\sum_{j=1}^{2N}t_j\bigg) 
		\; (\mathfrak{g}_{\bs{t}}'(0))^{\frac{N^2}{2}} 
		\Big(\prod_{j=1}^{2N} \covmap_{\bs{t},j}'(\xi_{t_j}^{j}) \Big)^{\frac{1}{4}}
		\LZradial(\bs{\mslitdriv}_{\bs{t}}). 
\end{align}
Denote by $\mathsf{P}_{2N}(\LZradial)$ the law obtained by tilting $\mathsf{P}_{2N}$ by $M_{\bs{t}}(\LZradial)$. 
\item[(2)]The following process is $2N$-time local martingale under $\mathsf{P}_{2N}$ up to any collision time: 
\begin{align}\label{eqn::multitime_mart_kappa4_gamma}
M_{\bs{t}}(\LZ_{\GFF}^{(\gamma)})=\one_{\LE_{\emptyset}(\bs{\eta}_{\bs{t}})} \, \exp\bigg(\frac{1}{2}\bm_{\bs{t}} -\frac{1}{8}\sum_{j=1}^{2N}t_j\bigg) 
		\; (\mathfrak{g}_{\bs{t}}'(0))^{-\frac{\gamma^2}{2}} 
		\Big(\prod_{j=1}^{2N} \covmap_{\bs{t},j}'(\xi_{t_j}^{j}) \Big)^{\frac{1}{4}}
		\LZ_{\GFF}^{(\gamma)}(\bs{\mslitdriv}_{\bs{t}}). 
\end{align}
Denote by $\mathsf{P}_{2N}(\LZ_{\GFF}^{(\gamma)})$ the law obtained by tilting $\mathsf{P}_{2N}$ by $M_{\bs{t}}(\LZ_{\GFF}^{(\gamma)})$. 
\item[(3)]
Consider the level lines $\bs{\eta}$ in Proposition~\ref{prop::asymp_forall_U}.
We view it as a $2N$-tuple of curves: let $\eta^j$ be the level line of $\Gamma+\gamma\log|\cdot|$ starting from $x_{j}$ when $j$ is odd and let $\eta^j$ be the level line of $-\Gamma-\gamma\log|\cdot|$ starting from $x_j$ when $j$ is even. Then the law of the $2N$-tuple $\bs{\eta}$ under $\PP_{\gamma}$
is the same as $\mathsf{P}_{2N}(\LZ_{\GFF}^{(\gamma)})$ up to any collision time. 
\end{itemize}
\end{proposition}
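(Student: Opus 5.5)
The plan is to reduce everything to the general multi-time martingale criterion of \cite{HuangPeltolaWuMultiradialSLE}. There, for $\kappa>0$ and a positive function $\LZ$ on $\LX_n$, the process $M_{\bs{t}}(\LZ)$ in~\eqref{eqn::multitime_mart_radial} is an $n$-time local martingale under $\mathsf{P}_n$ if and only if $\LZ$ satisfies the radial BPZ system. For every $j$ this system reads
\begin{align*}
\frac{\kappa}{2}\partial_j^2\LZ+\sum_{i\neq j}\Big(\cot\big(\tfrac{\theta_i-\theta_j}{2}\big)\partial_i-\frac{\mathfrak{b}}{2\sin^2\big(\frac{\theta_i-\theta_j}{2}\big)}\Big)\LZ+\Big(\frac{\kappa-6}{2\kappa}\cdot\frac{n^2-1}{\dots}\,\text{(radial constant)}+\aleph\Big)\LZ=0 ,
\end{align*}
with the precise constant being the one fixed in their Proposition. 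At $\kappa=4$ we have $\mathfrak{b}=1/4$, $\tilde{\mathfrak{b}}=1/8$ and $\mathfrak{c}=1$. These values produce exactly the prefactors $\exp(\frac12\bm_{\bs{t}}-\frac18\sum t_j)$ and $\prod(\covmap')^{1/4}$ appearing in~\eqref{eqn::multiradial_kappa4_mart} and~\eqref{eqn::multitime_mart_kappa4_gamma}. Items (1) and (2) therefore reduce to checking that $\LZradial$ and $\LZ_{\GFF}^{(\gamma)}$ solve this system with eigenvalue exponents $\tilde{\mathfrak{b}}-\aleph=N^2/2$ and $\tilde{\mathfrak{b}}-\aleph=-\gamma^2/2$ respectively.

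For (1), $\LZradial$ is the $\kappa=4$ multiradial partition function with all $2N$ curves aimed at $0$, whose exponent is known. I would simply cite the $\kappa=4$ specialisation of the multiradial $\SLE$ result in \cite{HealeyLawlerNSidedRadialSLE, HuangPeltolaWuMultiradialSLE}, namely $\prod_{i<j}\sin(\frac{\theta_j-\theta_i}{2})^{2/\kappa}$ with exponent $(n^2-1)/(2\kappa)\cdot\ldots$. As a sanity check, $n=2N$ and $\kappa=4$ give $\tilde{\mathfrak b}-\aleph=N^2/2$.

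For (2), I would verify the PDE directly. Write $\log\LZ_{\GFF}^{(\gamma)}=\frac12\sum_{i<j}(-1)^{j-i}\log\sin\frac{\theta_j-\theta_i}{2}+\gamma\sum_j(-1)^j\theta_j$. The first term is the $\gamma=0$ radial GFF partition function, for which the radial BPZ equation is the radial analogue of~\cite{SchrammWilsonSLECoordinatechanges} and is checked by the same trigonometric identities ($\cot a\cot b+\cot b\cot c+\cot c\cot a=1$ when $a+b+c=0$). The linear term $\gamma\sum(-1)^j\theta_j$ contributes three things. First, $\frac{\kappa}{2}\gamma^2=2\gamma^2$ from the square of the first derivative. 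Second, cross terms between $\gamma$ and the first derivative of the trigonometric part. Third, first-order terms $\gamma\sum_{i\ne j}(-1)^i\cot(\cdot)$. I expect the cross terms and the first-order terms to cancel pairwise by the alternation of signs. What remains is a constant shift proportional to $\gamma^2$, and this shift must match $\aleph$ with $\tilde{\mathfrak b}-\aleph=-\gamma^2/2$. The main obstacle I expect is this bookkeeping of constants, in particular making sure that the $\gamma^2$ coefficient combines with the normalisation of $\covmap'$ and $\mathfrak{g}'_{\bs t}(0)$ to give exactly $-\gamma^2/2$. I would double-check it against the single-curve case $N=1$, where $\Gamma+\gamma\log|\cdot|$ gives a radial $\SLE_4(\rho)$-type driving drift $\gamma$ that can be computed by hand.

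For (3), I would argue as in Lemmas~\ref{lem::multiple_log_mart} and~\ref{lem::GFFlog_levelline_coupling}, but in radial coordinates and for several curves at once. Grow all $2N$ level lines (from $x_{2j-1}$ of the field and from $x_{2j}$ of the negated field, which are the reversals) in multi-time. By the GFF domain Markov property, the conditional field in $\U\setminus\cup\eta^j_{[0,t_j]}$ is again a GFF plus $\gamma\log|\cdot|$ transported by $\mathfrak{g}_{\bs t}$. The additive constant $-\gamma\log\mathfrak g'_{\bs t}(0)$ is absorbed, which explains the factor $(\mathfrak g_{\bs t}'(0))^{-\gamma^2/2}$. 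Its boundary data are alternating with jumps $\pm\pi$, that is, the data encoded by $\LZ_{\GFF}^{(\gamma)}(\bs{\mslitdriv}_{\bs t})$, whose linear part $\gamma\sum(\theta_{2j}-\theta_{2j-1})$ is the harmonic-extension term $\exp(\gamma\Gamma_0(0))$. Moving one curve at a time, the marginal dynamics of $\eta^j$ given the others is the radial $\SLE_4$ with drift $\kappa\partial_j\log\LZ_{\GFF}^{(\gamma)}$. This drift agrees with the level-line characterisation: the radial version of~\eqref{eqn::multiple_log_SDE}, verified via the test-function martingale~\eqref{eqn::mart_multi_test}. Uniqueness of the law with these one-curve conditional dynamics, as in \cite{HuangPeltolaWuMultiradialSLE}, identifies $\PP_\gamma$ with $\mathsf P_{2N}(\LZ_{\GFF}^{(\gamma)})$ up to collision.
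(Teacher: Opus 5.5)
\textbf{The gap is in item (2)}, at the step you defer as bookkeeping, and your own decomposition already settles it. Write $\LZ=\LZ_0\exp\bigl(c\sum_k(-1)^k\vartheta_k\bigr)$ with $\LZ_0=\prod_{i<j}\sin(\frac{\vartheta_j-\vartheta_i}{2})^{\frac12(-1)^{j-i}}$. Since $\partial_j\log\LZ_0=\frac14\sum_{i\neq j}(-1)^{j-i}\cot(\frac{\vartheta_j-\vartheta_i}{2})$, the cross term $4c(-1)^j\partial_j\log\LZ_0$ cancels the first-order term $c\sum_{i\neq j}(-1)^i\cot(\frac{\vartheta_i-\vartheta_j}{2})$ identically. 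Your cotangent identity gives $\aleph_0=\frac18$ for $\LZ_0$. Hence $\LZ$ solves \eqref{eqn::radialBPZ} with $\aleph=\frac18+2c^2$ and no other constant.

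For the function displayed in \eqref{eqn::GFFonelog_pf} one has $c=\gamma$. That is exactly your ``$2\gamma^2$'', so $\tilde{\mathfrak b}-\aleph=-2\gamma^2$, not $-\gamma^2/2$. The exponents in \eqref{eqn::multitime_mart_radial} are dictated by $\aleph$, so no normalisation of $\covmap'$ or $\mathfrak g'_{\bs t}(0)$ can repair this. Your claim that ``this shift must match'' is therefore false as written.

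The exponent $-\gamma^2/2$ holds exactly when $c=\pm\gamma/2$, which is also the physically correct factor. Indeed $\Gamma_0(\U;\ee^{\ii\bs\theta};0)=\frac12\sum_j(\theta_{2j}-\theta_{2j-1})$, so $\exp(\gamma\Gamma_0(0))=\exp(\frac{\gamma}{2}\sum_j(\theta_{2j}-\theta_{2j-1}))$. The $N=1$ check you propose gives an extra driving drift of size $2\gamma$, not the $4\gamma$ that $c=\gamma$ would produce.

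The same factor-$2$ slip appears in your item (3), where you identify $\gamma\sum_j(\theta_{2j}-\theta_{2j-1})$ with $\gamma\Gamma_0(0)$. Note also that $\gamma\log|\cdot|=-\gamma\Green(\U;0,\cdot)$, which flips the sign of $c$. This is harmless for $\aleph$ but not for the drift in (3). Relatedly, the factor $(\mathfrak g'_{\bs t}(0))^{-\gamma^2/2}$ comes from the self-energy term $\frac{\gamma^2}{2}\log\CR$ in $\LE_{\bs\gamma}$, not from absorbing an additive constant. The conditional mean contains $\pm\gamma\Green(\U\setminus\cup_j\eta^j_{[0,t_j]};0,\cdot)$, which pushes forward under $\mathfrak g_{\bs t}$ to $\pm\gamma\Green(\U;0,\cdot)$ with no constant.

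\textbf{Comparison with the paper.} The paper avoids the computation. In Lemma~\ref{lem::tworadailBPZ} it transports the single-curve martingale of Lemma~\ref{lem::multiple_log_mart} (with $M=1$) to the disc, and reads off from It\^o's formula that the drift $\LD_1\LZ_{\GFF}^{(\gamma)}-\frac18-\frac{\gamma^2}{2}$ must vanish. That argument certifies the equation for whatever function the coordinate change actually produces, namely the one with $c=\gamma/2$, so it never confronts the displayed formula. The $\gamma$ in \eqref{eqn::GFFonelog_pf} appears to be a misprint for $\gamma/2$ (up to sign), and the statement should be read with that correction.

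With the linear factor fixed, your direct verification is a complete proof of (2), more elementary than the paper's. It is also the version that actually pins down the constant. Your (1) coincides with the paper's citation. Your (3) matches each curve's one-at-a-time conditional dynamics with the level-line drift $4\partial_j\log\LZ_{\GFF}^{(\gamma)}$ and invokes uniqueness; this is essentially the paper's Girsanov comparison of the multi-time driving SDEs, and again it is valid only with the corrected drift. Finally, drop the extra ``radial constant'' from your displayed system: in \eqref{eqn::radialBPZ} the only constant is $\aleph$.
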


\begin{lemma}[{\cite[Proposition~2.4]{HuangPeltolaWuMultiradialSLE}}]\label{lem::multitime_mart_radial}
Fix $\kappa>0$ and $\aleph\in\R$. 
The  process $M(\LZ)$ defined in~\eqref{eqn::multitime_mart_radial} is $n$-time local martingale under $\mathsf{P}_n$ if $\LZ$ is positive and smooth and it satisfies the following system of \emph{radial BPZ equations}\textnormal{:}
for each $\bs{\vartheta} := (\vartheta_1, \ldots, \vartheta_n)\in\LX_n$, 
\begin{align}\label{eqn::radialBPZ}
	\frac{\kappa}{2} \frac{\partial_j^2 \LZ(\bs{\vartheta})}{\LZ(\bs{\vartheta})} 
	+ \underset{1\leq i\neq j \leq n}{\sum} \, \bigg( \cot \bigg(\frac{\vartheta_{i}-\vartheta_{j}}{2}\bigg)
	\frac{\partial_{i} \LZ(\bs{\vartheta})}{\LZ(\bs{\vartheta})} \, - \, \frac{\mathfrak{b}/2}{ \sin^2 \big(\frac{\vartheta_{i}-\vartheta_{j}}{2}\big)}\bigg)=\aleph, \qquad \textnormal{for all }j\in \{1,\ldots,n\}.
\end{align}
\end{lemma}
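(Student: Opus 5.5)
The plan is the standard one for multi-time martingales. An $n$-parameter process is an $n$-time local martingale when, for each $j$ and with all the other times $t_i$ ($i\neq j$) frozen, it is a local martingale in $t_j$ with respect to the filtration of $\eta^j$. So I fix $j$ and grow only $\eta^j$. Under $\mathsf{P}_n$ the curves are independent, so in this variable $\xi^j_{t_j}=\theta_j+\sqrt{\kappa}B_{t_j}$ is a Brownian motion, and the other curves $\eta^i_{[0,t_i]}$ are fixed hulls. Everything is then one-variable It\^o calculus. I would show that the drift of $\log M_{\bs{t}}(\LZ)+\frac12\ud\langle\log M\rangle$ vanishes exactly when $\LZ$ satisfies the $j$-th equation in~\eqref{eqn::radialBPZ}. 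The indicator $\one_{\LE_{\emptyset}}$ is constant before the collision time, so it plays no role.

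The first step is the time evolution of the covering maps. Write $\phi:=\covmap_{\bs{t},j}$, $\xi:=\xi^j_{t_j}$ and $\omega:=\mslitdriv^j_{\bs{t}}=\phi(\xi)$. From $\covmap_{\bs{t}}=\phi\circ\covmap^j_{t_j}$, and from the fact that $\mathfrak{g}_{\bs{t}}$ evolves in $t_j$ by the radial Loewner equation with speed $\phi'(\xi)^2$ and driving point $\omega$, I get
\[
\partial_{t_j}\phi(x)=\phi'(\xi)^2\cot\Big(\frac{\phi(x)-\omega}{2}\Big)-\phi'(x)\cot\Big(\frac{x-\xi}{2}\Big).
\]
Taylor-expanding this near $x=\xi$ gives
\[
\partial_{t_j}\phi(\xi)=-3\phi''(\xi),\qquad \partial_{t_j}\phi'(\xi)=\frac{\phi''(\xi)^2}{2\phi'(\xi)}-\frac43\phi'''(\xi)+\frac{\phi'(\xi)-\phi'(\xi)^3}{6}.
\]
The Schwarzian $\LS\phi$ enters here. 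By It\^o's formula,
\[
\ud\omega=\phi'(\xi)\sqrt{\kappa}\,\ud B+\Big(\frac{\kappa}{2}-3\Big)\phi''(\xi)\,\ud t_j .
\]
For $i\neq j$ the other driving points move without martingale part, $\ud\mslitdriv^i_{\bs{t}}=\phi'(\xi)^2\cot\big((\mslitdriv^i_{\bs{t}}-\omega)/2\big)\ud t_j$. Differentiating the formula for $\partial_{t_j}\phi$ at the other tips gives $\ud\log\covmap'_{\bs{t},i}(\xi^i_{t_i})=-\frac{\phi'(\xi)^2}{2\sin^2((\mslitdriv^i_{\bs{t}}-\omega)/2)}\ud t_j$. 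Finally $\ud\log\mathfrak{g}_{\bs{t}}'(0)=\phi'(\xi)^2\ud t_j$.

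Next I assemble $\ud\log M$. Its pieces are:
\begin{itemize}
\item the term $\frac{\mathfrak{c}}{2}\,\ud\bm_{\bs{t}}$, which contains $-\frac13\LS\phi+\frac16(1-\phi'^2)$;
\item the term $-\tilde{\mathfrak{b}}\,\ud t_j$;
\item the term $(\tilde{\mathfrak{b}}-\aleph)\phi'^2\,\ud t_j$;
\item $\mathfrak{b}$ times the sum of $\ud\log\phi'(\xi)$ (which carries an It\^o correction) and the $\ud\log\covmap'_{\bs{t},i}$ terms;
\item $\ud\log\LZ(\bs{\mslitdriv}_{\bs{t}})$.
\end{itemize}
The martingale part of $\log M$ is $\big(\mathfrak{b}\,\phi''/\phi'+\phi'\,\partial_j\log\LZ\big)\sqrt{\kappa}\,\ud B$, which gives the quadratic-variation correction. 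The drift sorts into three groups:
\begin{itemize}
\item terms in $\phi''',\phi''^2/\phi'^2$ and constants, which must cancel identically by the choice of $\mathfrak{b},\tilde{\mathfrak{b}},\mathfrak{c}$ in~\eqref{eqn::universal_parameters};
\item a cross term $\phi''\,\partial_j\log\LZ$, whose coefficient $(\frac{\kappa}{2}-3+\kappa\mathfrak{b})$ vanishes because $\mathfrak{b}=\frac{6-\kappa}{2\kappa}$;
\item terms proportional to $\phi'(\xi)^2$, which equal $\phi'^2$ times (the left side of the $j$-th equation in~\eqref{eqn::radialBPZ} minus $\aleph$).
\end{itemize}
The last group uses that $\frac{\kappa}{2}\partial_j^2\LZ/\LZ$ arises from $\partial_j^2\log\LZ+(\partial_j\log\LZ)^2$. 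The drift therefore vanishes if and only if the BPZ system holds.

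The main obstacle is the first group. The third derivatives and squared second derivatives produced by $\partial_{t_j}\phi'(\xi)$, by the It\^o correction of $\log\phi'(\xi)$ and by $\ud\bm$ must cancel exactly. This is where the precise values of $\tilde{\mathfrak{b}}$ and $\mathfrak{c}$, and the choice of potential $\bm_{\bs{t}}$ (whose exactness is~\cite[Lemma~2.6]{HuangPeltolaWuMultiradialSLE}), are dictated. I would check this cancellation first for $n=1$, where $\phi=\mathrm{id}$ and only the constant terms survive. Then I would redo the Taylor expansion carefully, keeping $\phi$ general.
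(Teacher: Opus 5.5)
Your proposal is correct, and it is essentially the direct It\^o computation behind \cite[Proposition~2.4]{HuangPeltolaWuMultiradialSLE}, which the paper quotes without giving a proof; the evolution formulas you write for $\partial_{t_j}\phi(\xi)$, $\partial_{t_j}\phi'(\xi)$, $\ud\omega$, the other tips and $\log\mathfrak{g}_{\bs{t}}'(0)$ all check out. The cancellations you leave to be verified do hold: the drift coefficients of $\phi'''/\phi'$, $(\phi''/\phi')^2$ and $1$ are $\mathfrak{b}(\frac{\kappa}{2}-\frac{4}{3})-\frac{\mathfrak{c}}{6}$, $\frac{\mathfrak{c}}{4}+\frac{(1-\kappa)\mathfrak{b}}{2}+\frac{\kappa\mathfrak{b}^2}{2}$ and $\frac{\mathfrak{c}}{12}+\frac{\mathfrak{b}}{6}-\tilde{\mathfrak{b}}$, all zero for the values in~\eqref{eqn::universal_parameters}, and the last identity also kills the non-BPZ part $-\frac{\mathfrak{c}}{12}-\frac{\mathfrak{b}}{6}+\tilde{\mathfrak{b}}$ of the $\phi'^2$-coefficient, leaving exactly $\phi'(\xi)^2$ times (the $j$-th BPZ left side minus $\aleph$) --- but note that your $n=1$ sanity check, where $\phi'\equiv 1$, cannot separate the constant group from the $\phi'^2$ group, so it does not replace the expansion with general $\phi$.
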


\begin{lemma}\label{lem::tworadailBPZ}
\begin{itemize}
\item[(1)] The partition function $\LZradial$~\eqref{eqn::multiradial_kappa4_pf} satisfies the system of radial BPZ equations~\eqref{eqn::radialBPZ} with $n=2N, \kappa=4$ and $\aleph=\frac{1}{8}-\frac{N^2}{2}$.
\item[(2)] The partition function $\LZ_{\GFF}^{(\gamma)}$~\eqref{eqn::GFFonelog_pf} satisfies the system of radial BPZ equations~\eqref{eqn::radialBPZ} with $n=2N, \kappa=4$ and $\aleph=\frac{1}{8}+\frac{\gamma^2}{2}$.
\end{itemize}
\end{lemma}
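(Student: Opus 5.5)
The plan is a direct computation: both partition functions are products of powers of $\sin\big(\tfrac{\vartheta_k-\vartheta_i}{2}\big)$, possibly times an exponential of a linear function. So I will rewrite the system~\eqref{eqn::radialBPZ}, with $\kappa=4$ and $\mathfrak{b}=\tfrac14$, in terms of $\log\LZ$. I will treat both cases at once by writing
\[
\log\LZ(\bs{\vartheta})=\sum_{1\le i<k\le 2N}a_{ik}\log\sin\Big(\frac{\vartheta_k-\vartheta_i}{2}\Big)+\sum_{j=1}^{2N}\ell_j\vartheta_j ,
\]
with $a_{ik}=a_{ki}$. Case~(1) corresponds to $a_{ik}=\tfrac12$ and $\ell\equiv0$. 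Case~(2) corresponds to $a_{ik}=\tfrac12(-1)^{k-i}$ and $\ell_j=\gamma(-1)^j$. Set $c_{jk}:=\cot\big(\tfrac{\vartheta_j-\vartheta_k}{2}\big)$, which is antisymmetric in $j,k$. Then
\[
\partial_j\log\LZ=\tfrac12\sum_{k\neq j}a_{jk}c_{jk}+\ell_j,\qquad
\partial_j^2\log\LZ=-\tfrac14\sum_{k\neq j}a_{jk}(1+c_{jk}^2),
\]
and $\partial_j^2\LZ/\LZ=(\partial_j\log\LZ)^2+\partial_j^2\log\LZ$. In the sum over $i\neq j$, the term $\cot\big(\tfrac{\vartheta_i-\vartheta_j}{2}\big)=c_{ij}$ multiplies $\partial_i\log\LZ$, and I will use $\sin^{-2}=1+c_{ij}^2$.

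Next I will sort the left-hand side of~\eqref{eqn::radialBPZ} for a fixed $j$ into three groups: quadratic terms in the cotangents, linear terms, and constants.

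For the quadratic group, the terms $c_{jk}^2$ come from three sources: $\tfrac12a_{jk}^2c_{jk}^2$ out of $2(\partial_j\log\LZ)^2$, $-\tfrac12a_{jk}c_{jk}^2$ out of $2\partial_j^2\log\LZ$, and from the $i$-sum $-\tfrac12a_{ij}c_{ij}^2$ and $-\tfrac18c_{ij}^2$. With $a_{jk}=\pm\tfrac12$ these add up to $\big(\tfrac18\mp\tfrac14\mp\tfrac14-\tfrac18\big)c_{jk}^2$. I must double-check the signs here to see that they cancel, adjusting the bookkeeping of $c_{ij}=-c_{ji}$ carefully.

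The mixed products $c_{jk}c_{jm}$ with $k\neq m$ will be handled with the triple identity $\cot a\cot b+\cot b\cot c+\cot c\cot a=1$ for $a+b+c=0$. Applied to $a=\tfrac{\vartheta_j-\vartheta_k}{2}$, $b=\tfrac{\vartheta_k-\vartheta_m}{2}$, $c=\tfrac{\vartheta_m-\vartheta_j}{2}$, it lets me combine $c_{jk}c_{jm}$ from $(\partial_j\log\LZ)^2$ with the terms $c_{ij}\,a_{im}c_{im}$ coming from $c_{ij}\partial_i\log\LZ$. In both cases the product $a_{jk}a_{jm}$ equals $a_{km}$ times a fixed constant: it is $\tfrac14$ in case~(1), and $\tfrac14(-1)^{k+m}$ in case~(2), which is again proportional to $a_{km}$. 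Each triple $\{j,k,m\}$ therefore collapses to a constant, contributing a multiple of $(2N-1)(2N-2)$.

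For the linear group, the exponential factor in case~(2) produces the cross terms $2\ell_j\sum_k a_{jk}c_{jk}=\gamma\sum_k(-1)^kc_{jk}$ and $\sum_{i\neq j}c_{ij}\ell_i=-\gamma\sum_i(-1)^ic_{ji}$. These cancel exactly, which is the reason for the alternating pattern of $\ell$. What remains is $2\ell_j^2$, a constant.

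The final step, and the one I expect to be the real obstacle, is collecting all constants and matching them with $\aleph=\tfrac18-\tfrac{N^2}{2}$, respectively $\aleph=\tfrac18+\tfrac{\gamma^2}{2}$. The constants come from the $1$'s in $\csc^2=1+\cot^2$, from the triple identity, and from $2\ell_j^2$. Two points need care. For the triple-identity sum in case~(2), the count of triples must be weighted by signs, and the result must be independent of the parity of $j$. For the $\gamma$-dependent constant, the normalization of $\ell_j$ (that is, how $\gamma\log|\cdot|$ translates into $\gamma(\theta_{2j}-\theta_{2j-1})$) fixes the factor in front of $\gamma^2$. I would verify it against Lemma~\ref{lem::multiple_log_mart} via the conformal change to $\U$, rather than trust the exponent blindly.

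As a sanity check on the whole bookkeeping, I would run the case $N=1$ explicitly. There $\LZradial=\sin\big(\tfrac{\vartheta_2-\vartheta_1}{2}\big)^{1/2}$, and the equation reduces to an elementary one-variable identity, which should give $\aleph=\tfrac18-\tfrac12$.
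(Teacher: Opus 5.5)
Your direct computation is a genuinely different route from the paper's, and it works once you fix one sign and finish the constants.

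\textbf{The sign.} In $\sum_{i\neq j}c_{ij}\,\partial_i\log\LZ$, the factor $c_{ij}$ multiplies the $m=j$ term $\tfrac12 a_{ij}c_{ij}$ of $\partial_i\log\LZ$. That contribution is therefore $+\tfrac12 a_{ij}c_{ij}^2$, not $-\tfrac12 a_{ij}c_{ij}^2$. The coefficient of $c_{jk}^2$ is then $\tfrac12 a_{jk}^2-\tfrac18=0$. With your sign it would be $\mp\tfrac12$.

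\textbf{The mixed terms.} The triple identity turns the $i$-sum contribution of a pair $\{k,m\}\not\ni j$ into $-\tfrac12 a_{km}(1+c_{jk}c_{jm})$. So you need exactly $a_{jk}a_{jm}=\tfrac12 a_{km}$, not mere proportionality. This holds in both cases, and the leftover constant is $-\tfrac12 a_{km}$.

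\textbf{The constants.} In case (1) they sum to $-\tfrac{2N-1}{4}-\tfrac{2N-1}{8}-\tfrac{(2N-1)(2N-2)}{8}=\tfrac18-\tfrac{N^2}{2}$. In case (2), use $\sum_{k\neq j}(-1)^{k-j}=-1$ and $\sum_{\{k,m\}\not\ni j}(-1)^{k+m}=1-N$. This gives $\tfrac14-\tfrac{2N-1}{8}+\tfrac{N-1}{4}=\tfrac18$ for every $j$, independent of parity, plus $2\ell_j^2$.

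\textbf{The $\gamma$-normalization.} Your worry here is justified. With $\ell_j=\gamma(-1)^j$ read literally off \eqref{eqn::GFFonelog_pf}, the constant is $\tfrac18+2\gamma^2$. This is already visible for $N=1$: $\sin(\tfrac{\vartheta_2-\vartheta_1}{2})^{-1/2}e^{c(\vartheta_2-\vartheta_1)}$ gives $\tfrac18+2c^2$. So part (2) is false as printed.

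The function that actually arises from $\LZ_{\GFF}^{(\gamma)}(\HH;\cdot)$ by conformal covariance carries $\exp(\gamma\Gamma_0(\U;\bs{x};0))$. Since each arc $(x_{2j-1}x_{2j})$ has harmonic measure $(\theta_{2j}-\theta_{2j-1})/(2\pi)$ from $0$, we get $\Gamma_0(\U;\bs{x};0)=\tfrac12\sum_j(\theta_{2j}-\theta_{2j-1})$. Hence $\ell_j=\pm\tfrac{\gamma}{2}(-1)^j$, where the sign is irrelevant. This gives $2\ell_j^2=\tfrac{\gamma^2}{2}$, as claimed.

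\textbf{Comparison with the paper.} The paper avoids all algebra. It refers (1) to the literature. For (2), it transfers the chordal local martingale of Lemma~\ref{lem::multiple_log_mart} to radial $\SLE_4$ by a coordinate change. It then reads off that the It\^o drift, namely the left side of \eqref{eqn::radialBPZ} minus $\tfrac18+\tfrac{\gamma^2}{2}$, must vanish. That argument is short, but it implicitly uses the correctly normalized function, so it cannot detect the missing factor $\tfrac12$ in \eqref{eqn::GFFonelog_pf}. Your computation is self-contained, treats both cases uniformly, and exposes that factor.
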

\begin{proof}
The BPZ equations for $\LZradial$ is straight forward and has been investigated in more generality in earlier literature~\cite{HuangPeltolaWuMultiradialSLE}. Let us check the BPZ equations for $\LZ_{\GFF}^{(\gamma)}$. We denote the differential operator by 
\begin{align}
\LD_j\LZ:=	2\frac{\partial_j^2 \LZ(\bs{\vartheta})}{\LZ(\bs{\vartheta})} 
	+ \underset{1\leq i\neq j \leq 2N}{\sum} \, \bigg( \cot \bigg(\frac{\vartheta_{i}-\vartheta_{j}}{2}\bigg)
	\frac{\partial_{i} \LZ(\bs{\vartheta})}{\LZ(\bs{\vartheta})} \, - \, \frac{1/8}{ \sin^2 \big(\frac{\vartheta_{i}-\vartheta_{j}}{2}\big)}\bigg). 
	\end{align}
	Due to symmetry, it suffices to show $\LD_1\LZ_{\GFF}^{(\gamma)}=\frac{1}{8}+\frac{\gamma^2}{2}$. We have the following two observations.
	\begin{itemize}
	\item On the one hand, Lemma~\ref{lem::multiple_log_mart} with $M=1$ and a standard change of coordinate calculation~\cite{SchrammWilsonSLECoordinatechanges} tells that the following process is a local martingale for radial $\SLE_4$ in $(\U; \ee^{\ii\theta_1}; 0)$: 
	\begin{align}
	M_t(\LZ_{\GFF}^{(\gamma)})=\prod_{j=2}^{2N}\covmap'_t(\theta_j)^{\frac{1}{4}}\times \mathfrak{g}'_t(0)^{-\frac{1}{8}-\frac{\gamma^2}{2}}\times \LZ_{\GFF}^{(\gamma)}(\xi_t, \covmap_t(\theta_2), \ldots, \covmap_t(\theta_{2N})). 
	\end{align}
	\item On the other hand, It\^o's formula tells 
	\begin{align*}
	\frac{\ud M_t(\LZ_{\GFF}^{(\gamma)})}{M_t(\LZ_{\GFF}^{(\gamma)})}=&2\partial_1(\log \LZ_{\GFF}^{(\gamma)})(\xi_t, \covmap_t(\theta_2), \ldots, \covmap_t(\theta_{2N}))\ud B_t\\
	&+\left(\LD_1\LZ_{\GFF}^{(\gamma)}(\xi_t, \covmap_t(\theta_2), \ldots, \covmap_t(\theta_{2N}))-\frac{1}{8}-\frac{\gamma^2}{2}\right)\ud t. 
\end{align*}	 
	\end{itemize}
As $M_t(\LZ_{\GFF}^{(\gamma)})$ is local martingale, the drift term of $\ud M_t(\LZ_{\GFF}^{(\gamma)})$ must vanish. Thus, $\LD_1\LZ_{\GFF}^{(\gamma)}=\frac{1}{8}+\frac{\gamma^2}{2}$ as desired. 
\end{proof}

\begin{proof}[Proof of Proposition~\ref{prop::multitime_mart_two}]
The processes $M_{\bs{t}}(\LZradial)$ and $M_{\bs{t}}(\LZ_{\GFF}^{(\gamma)})$ are $2N$-time local martingales under $\mathsf{P}_{2N}$ due to Lemmas~\ref{lem::multitime_mart_radial} and~\ref{lem::tworadailBPZ}. It remains to show that the law of $\bs{\eta}$ under $\PP_{\gamma}$ is the same as $\mathsf{P}_{2N}(\LZ_{\GFF}^{(\gamma)})$. 

Note that 
\begin{align}
\label{eqn::multitimemart_aux2}
\begin{split}
	\ud \mslitdriv_{\bs{t}}^{j} = & \covmap_{\bs{t},j}'\left(\xi_{t_j}^{j}\right) \ud \xi_{t_j}^{j} - \covmap_{\bs{t},j}''\left(\xi_{t_j}^{j}\right) \ud t_j  +\sum_{l \neq j} \cot\left(\left(\mslitdriv_{\bs{t}}^{j}-\mslitdriv_{\bs{t}}^{l}\right)/2\right)\covmap_{\bs{t},l}'\left(\xi_{t_l}^{l}\right)^2 \ud t_l,
\end{split}
\end{align}
and 
\begin{align}
\label{eqn::dMtLZgffr}
\frac{\ud M_{\bs{t}}(\LZ_{\GFF}^{(\gamma)})}{M_{\bs{t}}(\LZ_{\GFF}^{(\gamma)})}=& \sum_{j=1}^{2N} \left( \frac{\partial_j \LZ_{\GFF}^{(\gamma)}}{\LZ_{\GFF}^{(\gamma)}} \covmap_{\bs{t},j}'\left(\xi_{t_j}^{j}\right)+ \frac{1}{4} \frac{\covmap_{\bs{t},j}''\left(\xi_{t_j}^{j}\right)}{\covmap_{\bs{t},j}'\left(\xi_{t_j}^{j}\right)} \right) \ud \xi_{t_j}^{j}.
\end{align}
From Girsanov's theorem and \eqref{eqn::multitimemart_aux2} and \eqref{eqn::dMtLZgffr}, we have
\begin{align*}
\begin{split}
	\ud \mslitdriv_{\bs{t}}^{j}=&2 \covmap_{\bs{t},j}'\left(\xi_{t_j}^{j}\right) \ud \tilde{B}_{t_j}^{j} + 4 \left( \partial_j \log\LZ_{\GFF}^{(\gamma)} \right) \left( \mslitdriv_{\bs{t}}^{1},\cdots,\mslitdriv_{\bs{t}}^{2N} \right) \covmap_{\bs{t},j}'\left(\xi_{t_j}^{j}\right)^2 \ud t_j \\
	&+ \sum_{l \neq j} \cot\left(\left(\mslitdriv_{\bs{t}}^{j}-\mslitdriv_{\bs{t}}^{l}\right)/2\right) \covmap_{\bs{t},l}'\left(\xi_{t_l}^{l}\right)^2 \ud t_l,
\end{split}
\end{align*}
where $\tilde{B}^1, \ldots, \tilde{B}^{2N}$ are independent Brownian motions under $\mathsf{P}_{2N}(\LZ_{\GFF}^{(\gamma)})$. Taking into account the variation of the capacity parametrization, we obtain that $\PP_{\gamma}$ is the same as $\mathsf{P}_{2N}(\LZ_{\GFF}^{(\gamma)})$ as desired. 
\end{proof}

\subsection{Dyson Brownian motion}
\label{subsec::DysonBM}
\paragraph*{Common-time parameter.}
Fix $n\ge 1$ and $\bs{\theta}=(\theta_1,\ldots,\theta_n)\in \LX_n$. 
Consider an $n$-tuple \[\bs{\eta}_{[\bs{0},\bs{t}]}=(\eta^1_{[0,t_1]}, \ldots, \eta^n_{[0,t_n]})\] of simple curves in the polygon $(\U;\ee^{\ii\bs{\theta}};0)$ parameterized by $\bs{t}=(t_1, \ldots, t_n)\in[0,\infty)^n$, such that $\eta_0^j=\ee^{\ii \theta_j}$ and the segments $\{ \eta^{j}_{[0,t_j]}\}_{1\le j\le n}$ are disjoint. 
Recall that $\mathfrak{g}_{\bs{t}}$ is the conformal map from $\smash{\U \setminus \cup_{j=1}^n \eta^{j}_{[0,t_j]}}$ onto $\U$ normalized at the origin, i.e., 
\[
	\mathfrak{g}_{\bs{t}}(0)=0, \quad \log \mathfrak{g}_{\bs{t}}'(0)>0.
\] 
A standard calculation gives
\begin{equation*}
	\ud \log \mathfrak{g}_{\bs{t}}'(0)=\sum_{j=1}^{n} \covmap'_{\bs{t},j} ( \xi_{t_j}^{j} )^2 \ud t_j.
\end{equation*}
We say that the $n$-tuple $\bs{\eta}_{[\bs{0},\bs{t}]}$ is parameterized by common-time parameter if
\begin{equation*}
	\partial_{t_j} \log \mathfrak{g}_{(t,\ldots,t)}'(0)=1, \quad \text{for } 1\le j\le n.
\end{equation*}
It is proved in~\cite[Lemma~3.2]{HealeyLawlerNSidedRadialSLE} that common-time parameter exists for continuous disjoint simple curves $\bs{\eta}_{[\bs{0},\bs{t}]}$. 
We define the collision time to be the first time that any two curves in $\bs{\eta}$ meet. 
The relation between common-time parameter and multi-time parameter is given by
\begin{equation*}
	\ud t_j = \covmap_{\bs{t},j}'(W_{t_j}^j)^{-2} \ud t, \quad \text{ for } 1\le j\le n,
\end{equation*}
Moreover,
\begin{equation*}
	t\le t_j(t) \le nt, \quad \text{for } 1\le j\le n.
\end{equation*}
Under common-time parameter, we denote the $n$-tuple by $\bs{\eta}_{[0,t]}=\bs{\eta}_{[(0,\ldots,0),(t,\ldots,t)]}$. We define the following normalized conformal transformations:
\begin{itemize}
	\item $\mathfrak{g}_t^j$ is the conformal map from $\U\setminus \eta_{[0,t]}^j$ onto $\U$ with $\mathfrak{g}_t^j(0)=0$ and $\left(\mathfrak{g}_t^j \right)'(0)>0$, where $1\le j\le n$.
	\item $\mathfrak{g}_t$ is the conformal map from $\U\setminus \cup_{j=1}^n \eta_{[0,t]}^j$ onto $\U$ with $\mathfrak{g}_t(0)=0$ and $\mathfrak{g}'_t(0)=\ee^{nt}>0$. 
	\item $\mathfrak{g}_{t,j}$ is the conformal map from $\U\setminus \mathfrak{g}_{t}^j (\cup_{i \neq j} \eta_{[0,t]}^i)$ onto $\U$ with $\mathfrak{g}_{t,j}(0)=0$ and $\mathfrak{g}'_{t,j}(0)>0$, where $1\le j\le n$.
\end{itemize}
Let $\covmap_{t}^{j},\covmap_{t},\covmap_{t,j}$ be the covering maps of $\mathfrak{g}_{t}^{j},\mathfrak{g}_{t},\mathfrak{g}_{t,j}$, respectively. Denote by $\xi^j$ the driving function of $\eta^j$ and denote the driving function of the $n$-tuple $\bs{\eta}_{[0,t]}$, started from $\bs{\mslitdriv}_0=(\theta_1,\ldots,\theta_n)\in\LX_n$, by
\begin{equation*}
	\bs{\mslitdriv}_t=(\mslitdriv_t^1,\ldots,\mslitdriv_t^n), \quad \text{with } \mslitdriv_t^j=\covmap_{t,j}\left(\xi_t^j\right), \quad\text{for } 1\le j\le n.
\end{equation*}

\begin{lemma}\label{lem::HL21}
Suppose $\bs{\eta}=(\eta^1, \ldots, \eta^{2N})$ has the law of $\mathsf{P}_{2N}(\LZradial)$ in Proposition~\ref{prop::multitime_mart_two}. 
We parameterize $\bs{\eta}$ using common-time parameter and denote its driving function by $\bs{\mslitdriv}_t=(\mslitdriv_t^1,\ldots,\mslitdriv_t^{2N})$. Then 
\begin{equation*}
	\ud \mslitdriv_t^j = 2 \, \ud B_t^j + 2 \sum_{i \neq j} \cot \left( \frac{\mslitdriv_t^j-\mslitdriv_t^i}{2} \right)  \, \ud t, \quad 1 \leq j \leq 2N. 
\end{equation*}
Denote by $\Pradial(t; \bs{\theta}, \bs{\theta'})$ the transition density for $\bs{\mslitdriv}_t$ under $\mathsf{P}_{2N}(\LZradial)$. We define 
\begin{align}\label{eqn::Dyson_invdensity}
\mathsf{p}(\bs{\theta})=\frac{1}{\mathsf{C}_N}\prod_{1\le i<j\le 2N}\left(\sin\left(\frac{\theta_j-\theta_i}{2}\right)\right)^{2}, \qquad\text{where }\mathsf{C}_N=\int_{\LX_{2N}}\prod_{1\le i<j\le 2N}\left(\sin\left(\frac{\theta_j-\theta_i}{2}\right)\right)^{2}\ud \bs{\theta}. 
\end{align}
Then we have the following asymptotic for $\Pradial(t; \bs{\theta}, \bs{\theta'})$ as $t\to\infty$: 
\begin{align}\label{eqn::HL21_asy}
\Pradial(t; \bs{\theta}, \bs{\theta'})=\mathsf{p}(\bs{\theta'})\left(1+O(\ee^{-at})\right), 
\end{align}
where $a>0$ is a constant depending on $N$. 
\end{lemma}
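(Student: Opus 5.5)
Under $\mathsf{P}_{2N}$ the curves are independent radial $\SLE_4$, so in multi-time parameter each $\xi^j$ is $\theta_j+2B^j_{t_j}$. I will tilt by $M_{\bs{t}}(\LZradial)$ and then switch to the common-time parameter. The SDE follows from Girsanov exactly as in the proof of Proposition~\ref{prop::multitime_mart_two}. There, with $\LZ_{\GFF}^{(\gamma)}$ replaced by $\LZradial$, we obtained
\[
\ud \mslitdriv^j=2\covmap'_{\bs{t},j}\,\ud\tilde B^j_{t_j}+4\,\partial_j\log\LZradial\,(\covmap'_{\bs{t},j})^2\ud t_j+\sum_{l\neq j}\cot\big((\mslitdriv^j-\mslitdriv^l)/2\big)(\covmap'_{\bs{t},l})^2\ud t_l .
\]
The $\covmap''$ terms cancel because the exponent of $\covmap'$ is $1/4=\mathfrak b$ at $\kappa=4$. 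Substituting $\ud t_j=(\covmap'_{\bs{t},j})^{-2}\ud t$ turns $2\covmap'_{\bs{t},j}\ud\tilde B^j_{t_j}$ into $2\,\ud B^j_t$ for a standard Brownian motion $B^j$ (Lévy characterization); the $B^j$ are independent. The remaining terms give $4\partial_j\log\LZradial\,\ud t+\sum_{l\ne j}\cot(\tfrac{\mslitdriv^j-\mslitdriv^l}{2})\ud t$. Since $\partial_j\log\LZradial=\frac14\sum_{i\neq j}\cot\big(\frac{\vartheta_j-\vartheta_i}{2}\big)$ by \eqref{eqn::multiradial_kappa4_pf}, the total drift is $2\sum_{i\ne j}\cot(\cdot)$, as claimed. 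I also need that no collision occurs under the tilted law; this holds because the resulting process is a circular Dyson Brownian motion at $\beta=2$, which a.s.\ never collides.

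For the transition density asymptotic I will use the standard $h$-transform structure of circular Dyson Brownian motion. The generator is
\[
\LL=2\sum_j\partial_j^2+2\sum_j\sum_{i\neq j}\cot\big(\tfrac{\vartheta_j-\vartheta_i}{2}\big)\partial_j .
\]
Set $\Delta(\bs\vartheta)=\prod_{i<j}\sin\big(\frac{\vartheta_j-\vartheta_i}{2}\big)$, which equals $\LZradial^2$. One checks that $\LL=\Delta^{-1}\circ\big(2\sum_j\partial_j^2-\lambda_0\big)\circ\Delta$ for an explicit constant $\lambda_0$. Equivalently, $\Delta$ is a positive eigenfunction of $2\sum\partial_j^2$ with Dirichlet conditions on $\partial\LX_{2N}$, since $\Delta$ is (up to a phase factor) a Vandermonde determinant in $\ee^{\ii\vartheta_j}$ and hence antisymmetric. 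Therefore
\[
\Pradial(t;\bs\theta,\bs\theta')=\ee^{\lambda_0t}\,\frac{\Delta(\bs\theta')}{\Delta(\bs\theta)}\,q_t(\bs\theta,\bs\theta'),
\]
where $q_t$ is the heat kernel of $2\sum\partial_j^2$ on the chamber $\LX_{2N}$ with Dirichlet boundary (quotienting by global rotation, or with the appropriate periodic identification).

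By the Karlin–McGregor/reflection principle, $q_t$ is a determinant of one-dimensional circle heat kernels. Its spectral expansion is a sum over strictly ordered frequency tuples, i.e.\ Schur-type antisymmetrized exponentials. The ground state is $\Delta$ itself, with eigenvalue $-\lambda_0$; the rotation mode carries zero net frequency after centering, or else is handled by the fact that $\mathsf p$ integrates over the full $\LX_{2N}$ including the rotation. Hence
\[
q_t=\ee^{-\lambda_0t}\,\frac{\Delta(\bs\theta)\Delta(\bs\theta')}{\|\Delta\|_2^2}\bigl(1+O(\ee^{-at})\bigr),
\]
with $a$ the spectral gap, which is positive since the Dirichlet operator on a bounded domain has discrete spectrum and a simple ground state. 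Multiplying out gives $\mathsf p(\bs\theta')\bigl(1+O(\ee^{-at})\bigr)$ with $\mathsf C_N=\|\Delta\|_2^2$, matching \eqref{eqn::Dyson_invdensity}.

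The main obstacle is making the error term uniform in $\bs\theta'$ near $\partial\LX_{2N}$, where relative errors could degenerate. Dividing by $\Delta(\bs\theta')$, each higher eigenfunction divided by $\Delta$ is a bounded Schur polynomial, so the ratio bound is uniform; if that fails I would simply cite \cite{HealeyLawlerNSidedRadialSLE}, where this estimate is proved.
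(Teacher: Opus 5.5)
Your argument is correct, but it takes a different route from the paper. The paper's proof of this lemma is only a citation of Healey--Lawler's Proposition~5.5, which they prove for the $n$-radial driving process at general $\kappa$. You rederive the SDE by the Girsanov computation from the proof of Proposition~\ref{prop::multitime_mart_two}; your bookkeeping $4\partial_j\log\LZradial=\sum_{i\neq j}\cot(\frac{\vartheta_j-\vartheta_i}{2})$ is right. You then use that $\kappa=4$ is the free-fermion point $\beta=2$. Conjugation by $\Delta=\LZradial^2$ removes the Calogero--Sutherland potential, whose coupling is proportional to $\beta(\beta-2)$ in general. So $\LL$ is conjugate to a Dirichlet Laplacian with explicit eigenfunctions $\det[\ee^{\ii m_k\vartheta_j}]$, and their ratios to $\Delta$ are Schur polynomials in the $\ee^{\ii\vartheta_j}$ up to a unimodular factor. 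The citation is shorter and is not tied to $\kappa=4$. Your proof is self-contained, uniform in both endpoints, and identifies the optimal rate.

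Points to tighten:

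(a) Work explicitly on the compact quotient of $\LX_{2N}$ by $\bs\vartheta\mapsto\bs\vartheta+2\pi(1,\ldots,1)$. On $\LX_{2N}$ itself $\mathsf{C}_N=\infty$, and the centre of mass $\frac{1}{2N}\sum_j\mslitdriv^j_t$ is a Brownian motion. On the quotient, a lowest excited state is the rotation mode $\Delta(\bs\vartheta)\exp(\pm\ii\sum_j\vartheta_j/(2N))$, which gives $a=1/N$.

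(b) Drop the Karlin--McGregor sentence. Up to a constant, $\Delta=\det[\ee^{\ii(k-N-\frac12)\vartheta_j}]_{j,k=1}^{2N}$; this, not antisymmetry, is why it is an eigenfunction. Its frequencies are therefore half-integers, so it does not occur in a determinant of $2\pi$-periodic heat kernels. The correct kernel averages twisted determinants over the $2N$-th roots of unity. All you need is compactness, simplicity of the positive ground state, and the eigenfunctions $\det[\ee^{\ii m_k\vartheta_j}]$ with $m_1<\cdots<m_{2N}$ in $c+\Z$, $c\in\frac{1}{2N}\Z$.

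(c) Uniformity needs summability, not just termwise boundedness. Let $\phi_k$ be the orthonormal Dirichlet eigenfunctions of $2\sum_j\partial_j^2$, with eigenvalues $-\mu_k$ and $\phi_0=\Delta/\|\Delta\|_2$. You need $\sum_{k\ge1}\ee^{-(\mu_k-\mu_0)t}\|\phi_k/\Delta\|_\infty^2<\infty$ for $t\ge 1$, since you divide by $\Delta(\bs\theta)\Delta(\bs\theta')$. This follows from $|s_\lambda|\le s_\lambda(1,\ldots,1)$ on the unit torus, which is polynomial in $\lambda$ by Weyl's dimension formula, while the eigenvalues grow quadratically.

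(d) The identity $\LL=\Delta^{-1}(2\sum_j\partial_j^2-\lambda_0)\Delta$ forces $2\sum_j\partial_j^2\Delta=\lambda_0\Delta$ with $\lambda_0=-N(4N^2-1)/3$. So both of your exponential prefactors have the wrong sign; they cancel, so this is harmless.
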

\begin{proof}
This is a special case of~\cite[Proposition~5.5]{HealeyLawlerNSidedRadialSLE}. 
\end{proof}

\begin{lemma}\label{lem::transition_density_gff}
Assume the same setup as in Proposition~\ref{prop::multitime_mart_two}. Denote $\bs{\eta}=(\eta^1, \ldots, \eta^{2N})$ and parameterize it by common-time parameter. Denote by $\bs{\mslitdriv}_t=(\mslitdriv_t^1,\ldots,\mslitdriv_t^{2N})$ its driving function and by $T$ the collision time. Denote by $\mathsf{P}_{\GFF}(t; \bs{\theta}, \bs{\theta'})$ the transition density for the driving function $\bs{\mslitdriv}_t=(\mslitdriv_t^1,\ldots,\mslitdriv_t^{2N})$ in the sense that, for any test function $F$, 
\begin{align*}
\E_{\gamma}\left[F(\bs{\mslitdriv}_t)\one\{T>t\}\right]=\int_{\LX_{2N}}\mathsf{P}_{\GFF}(t; \bs{\theta}, \bs{\theta'})F(\bs{\theta'})\ud\bs{\theta'}.
\end{align*}
Then
\begin{align}\label{eqn::transition_density_gff}
\mathsf{P}_{\GFF}(t; \bs{\theta}, \bs{\theta'})=\Pradial(t; \bs{\theta}, \bs{\theta'})\frac{\LZradial(\bs{\theta})/\LZradial(\bs{\theta'})}{\LZ_{\GFF}^{(\gamma)}(\bs{\theta})/\LZ_{\GFF}^{(\gamma)}(\bs{\theta'})}\exp\left(-(N^2+\gamma^2)Nt\right), 
\end{align}
where $\Pradial(t; \bs{\theta}, \bs{\theta'})$ is the transition density in Lemma~\ref{lem::HL21} and $\LZradial$ and $\LZ_{\GFF}^{(\gamma)}$ are partition functions in~\eqref{eqn::multiradial_kappa4_pf} and~\eqref{eqn::GFFonelog_pf}. Consequently, we have the following asymptotic for $\mathsf{P}_{\GFF}(t; \bs{\theta}, \bs{\theta'})$ as $t\to\infty$:
\begin{align}\label{eqn::transition_density_gff_asy}
\mathsf{P}_{\GFF}(t; \bs{\theta}, \bs{\theta'})=\mathsf{p}(\bs{\theta'})\frac{\LZradial(\bs{\theta})/\LZradial(\bs{\theta'})}{\LZ_{\GFF}^{(\gamma)}(\bs{\theta})/\LZ_{\GFF}^{(\gamma)}(\bs{\theta'})}\exp\left(-(N^2+\gamma^2)Nt\right)\left(1+O(\ee^{-at})\right), 
\end{align}
where $\mathsf{p}(\bs{\theta'})$ is the invariant density defined in~\eqref{eqn::Dyson_invdensity} and $\LZradial, \LZ_{\GFF}$ are partition functions defined in~\eqref{eqn::multiradial_kappa4_pf} and~\eqref{eqn::GFFonelog_pf}. 
\end{lemma}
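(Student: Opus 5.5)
The plan is to compare the two tilted laws $\mathsf{P}_{2N}(\LZ_{\GFF}^{(\gamma)})$ and $\mathsf{P}_{2N}(\LZradial)$ directly. By Proposition~\ref{prop::multitime_mart_two}, both are obtained from the same reference measure $\mathsf{P}_{2N}$, of $2N$ independent radial $\SLE_4$ curves, by tilting with the multi-time local martingales~\eqref{eqn::multiradial_kappa4_mart} and~\eqref{eqn::multitime_mart_kappa4_gamma}. By Proposition~\ref{prop::multitime_mart_two}(3), $\mathsf{P}_{2N}(\LZ_{\GFF}^{(\gamma)})$ coincides with $\PP_\gamma$ up to the collision time. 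Hence, up to collision, the law $\PP_\gamma$ is $\mathsf{P}_{2N}(\LZradial)$ reweighted by the ratio
\[
\frac{M_{\bs t}(\LZ_{\GFF}^{(\gamma)})/M_{\bs 0}(\LZ_{\GFF}^{(\gamma)})}{M_{\bs t}(\LZradial)/M_{\bs 0}(\LZradial)}.
\]
In this ratio the factors $\one_{\LE_\emptyset}$, $\exp(\frac12\bm_{\bs t}-\frac18\sum t_j)$ and $(\prod_j\covmap'_{\bs t,j})^{1/4}$ are common to both martingales and cancel. What remains is
\[
(\mathfrak g_{\bs t}'(0))^{-\frac{\gamma^2}{2}-\frac{N^2}{2}}\,
\frac{\LZ_{\GFF}^{(\gamma)}(\bs\mslitdriv_{\bs t})\,\LZradial(\bs\theta)}{\LZ_{\GFF}^{(\gamma)}(\bs\theta)\,\LZradial(\bs\mslitdriv_{\bs t})}.
\]

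Next I will pass to the common-time parameter. Along the diagonal $\bs t=(t,\ldots,t)$ we have $\mathfrak g_t'(0)=\ee^{2Nt}$, so the derivative factor becomes $\exp(-(N^2+\gamma^2)Nt)$. Since the common-time parametrization is a deterministic function of the curves (a stopping-line reparametrization), the optional stopping argument for multi-time martingales transfers the density identity to common time $t$ on the event $\{T>t\}$. For a test function $F$ this gives
\[
\E_\gamma[F(\bs\mslitdriv_t)\one\{T>t\}]=\E^{\LZradial}\Big[F(\bs\mslitdriv_t)\tfrac{\LZradial(\bs\theta)/\LZradial(\bs\mslitdriv_t)}{\LZ_{\GFF}^{(\gamma)}(\bs\theta)/\LZ_{\GFF}^{(\gamma)}(\bs\mslitdriv_t)}\ee^{-(N^2+\gamma^2)Nt}\Big].
\]
Under $\mathsf{P}_{2N}(\LZradial)$ the curves a.s.\ do not collide (the Dyson-type dynamics of Lemma~\ref{lem::HL21} is nonexplosive), so no indicator is needed on the right-hand side. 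Writing this expectation with the transition density $\Pradial$ yields~\eqref{eqn::transition_density_gff}. Inserting~\eqref{eqn::HL21_asy} then gives~\eqref{eqn::transition_density_gff_asy} immediately.

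The main obstacle is justifying the change of measure between the two tilts rigorously. The ratio is only a local martingale, and the tilting must hold up to time $t$ without loss of mass. I would handle this by localization. Stop at the first time any two driving angles come within $\delta$ of each other, or $\log\mathfrak g'$ exceeds a bound. Before that stopping time both $\LZ$'s are bounded above and below, so the ratio is a bounded martingale and the identity holds exactly on the stopped event. Then let $\delta\to0$. On the $\LZradial$ side the stopped event increases to full measure by non-collision. On the $\PP_\gamma$ side it increases to $\{T>t\}$, and monotone convergence closes the argument.
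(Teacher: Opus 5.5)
Your proposal is correct and follows the paper's proof. You take the ratio of the two multi-time martingales, cancel the common factors to leave $(\mathfrak g'_{\bs t}(0))^{-(N^2+\gamma^2)/2}\LZ_{\GFF}^{(\gamma)}/\LZradial$, and evaluate in common time with $\mathfrak g_t'(0)=\ee^{2Nt}$ before inserting \eqref{eqn::HL21_asy}. Your localization and non-collision argument makes rigorous the change-of-measure step, which the paper only asserts.
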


\begin{proof}
When using mulit-time parameter, the law of $\bs{\mslitdriv}_{\bs{t}}$ under $\PP_{\gamma}$ is the same as $\mathsf{P}_{2N}(\LZradial)$ weighted by the $2N$-time local martingale:
\begin{align*}
N_{\bs{t}}=\frac{M_{\bs{t}}(\LZ_{\GFF}^{(\gamma)})}{M_{\bs{t}}(\LZradial)}=(\mathfrak{g}_{\bs{t}}'(0))^{-\frac{N^2}{2}-\frac{\gamma^2}{2}} 
		\frac{\LZ_{\GFF}^{(\gamma)}(\bs{\mslitdriv}_{\bs{t}})}{\LZradial(\bs{\mslitdriv}_{\bs{t}})}. 
\end{align*}
Using common-time parameter, it becomes
\begin{align*}
N_t=\exp\left(-(N^2+\gamma^2)Nt\right)\frac{\LZ_{\GFF}^{(\gamma)}(\bs{\mslitdriv}_t)}{\LZradial(\bs{\mslitdriv}_t)}. 
\end{align*}
Thus, the law of $\bs{\eta}_{[0,t]}$ under $\PP_{\gamma}$ is the same as the law of $\mathsf{P}_{2N}(\LZradial)$ weighted by the Radon-Nikodym derivative
\begin{align*}
\frac{N_t}{N_0}=\exp\left(-(N^2+\gamma^2)Nt\right)\frac{\LZradial(\bs{\theta})/\LZradial(\bs{\mslitdriv}_t)}{\LZ_{\GFF}^{(\gamma)}(\bs{\theta})/\LZ_{\GFF}^{(\gamma)}(\bs{\mslitdriv}_t)}
\end{align*}
This gives~\eqref{eqn::transition_density_gff} as desired. Combining~\eqref{eqn::transition_density_gff} and~\eqref{eqn::HL21_asy}, we obtain~\eqref{eqn::transition_density_gff_asy} as desired.
\end{proof}

\subsection{Proof of Proposition~\ref{prop::asymp_forall_U} and proof of Proposition~\ref{prop::asymp}}
\label{subsec::asymp}
\begin{proof}[Proof of Proposition~\ref{prop::asymp_forall_U}]
Assume the same setup as in Lemma~\ref{lem::transition_density_gff}. From~\eqref{eqn::transition_density_gff_asy}, for $t>0$, we have
\begin{align}\label{eqn::asymp_forall_U_lifetime}
\PP_{\gamma}[T>t]\sim \frac{\LZradial(\bs{\theta})}{\LZ_{\GFF}^{(\gamma)}(\bs{\theta})}\exp\left(-(N^2+\gamma^2)Nt\right)\left(1+O(\ee^{-at})\right), 
\end{align}
where the constant in $\sim$ depends on $(N, \gamma)$. Note that the event  $\{\forall j\in \{1, \ldots, N\}, \dist(\eta^j, 0)\le \eps\}$ is closely related to the event $\{T > \frac{|\log \eps|}{2N}\}$. This explains the connection between~\eqref{eqn::asymp_forall_U_lifetime} and~\eqref{eqn::asymp_forall_U}. To make this heuristic argument precise, one may look at~\cite{LawlerRezaeiMinkowskiContent, ZhanGreen2SLE} or~\cite[Proof of Theorem 1.2]{FengWuYangIsing}.
\end{proof}

\begin{proof}[Proof of Proposition~\ref{prop::asymp}]
Let $\varphi$ be a conformal map from $\Omega$ onto $\U$ with $\varphi(u)=0$. We write $\ee^{\ii\bs{\theta}}=\varphi(\bs{x})$ where $\bs{\theta}=(\theta_1, \ldots, \theta_{2N})\in\LX_{2N}$ and $\ee^{\ii\theta_j}=\varphi(x_j)$ for $1\le j\le 2N$. 
From the conformal invariance of the GFF and of the Green's function, we have 
\begin{align*}
\PP_{\gamma}\left[\forall j\in \{1, \ldots, N\}, \dist(\eta^j, u)\le \eps\right]\sim& \PP_{\gamma}\left[\forall j\in \{1, \ldots, N\}, \dist(\varphi(\eta^j), 0)\le |\varphi'(u)|\eps\right]\\
\sim&(|\varphi'(u)|\eps)^{\frac{N^2+\gamma^2}{2}}\LZradial(\bs{\theta})/\LZ_{\GFF}^{(\gamma)}(\bs{\theta});\tag{due to~\eqref{eqn::asymp_forall_U}}\\
\frac{\prod_{j=1}^{2N}\Poisson(\Omega; u, x_j)^{\frac{N}{2}}\exp(-\gamma\Gamma_0(\Omega; \bs{x}; u))}{\CR(\Omega; u)^{\frac{N^2+\gamma^2}{2}}\prod_{\substack{1\le i<j\le 2N\\ i\not\equiv j\mod 2}}\Poisson(\Omega; x_i, x_j)^{\frac{1}{2}}}\sim&|\varphi'(u)|^{\frac{N^2+\gamma^2}{2}}\frac{\exp(-\gamma\Gamma_0(\U; \ee^{\ii\bs{\theta}}; 0))}{\prod_{\substack{1\le i<j\le 2N\\ i\not\equiv j\mod 2}}\Poisson(\U; \ee^{\ii\theta_i}, \ee^{\ii\theta_j})^{\frac{1}{2}}}
\tag{due to~\eqref{eqn::CR_cov}, \eqref{eqn::Poisson_cov}, \eqref{eqn::bPoisson_cov}}\\
\sim& |\varphi'(u)|^{\frac{N^2+\gamma^2}{2}} \exp\left(-\gamma\sum_{j=1}^N(\theta_{2j}-\theta_{2j-1})\right)\prod_{\substack{1\le i<j\le 2N\\ i\not\equiv j\mod 2}}\sin\left(\frac{\theta_j-\theta_i}{2}\right)\\
=&|\varphi'(u)|^{\frac{N^2+\gamma^2}{2}}\LZradial(\bs{\theta})/\LZ_{\GFF}^{(\gamma)}(\bs{\theta}).
\end{align*}
These complete the proof of~\eqref{eqn::asymp_forall}. 
\end{proof}

\begin{remark}
Assume the same notation as in Theorem~\ref{thm::multiplelog_polygon}  with $M=1$. Fix $u_1=u\in\Omega$ and $\gamma_1=\gamma\in\R$.
Using Proposition~\ref{prop::asymp} and cascade relation of multiple $\SLE_4$ and control of partition functions~\cite{PeltolaWuGlobalMultipleSLEs, WuHyperSLE}, one is able to derive the following asymptotic: for $k\in\{1, \ldots, N\}$, 
\begin{align*}
C^{-1} \eps^{\frac{k^2+\gamma^2}{2}}\le \PP_{\gamma}\left[\forall j\in\{1, \ldots, k\}, \dist(\eta^j, u)\le \eps\right]\le C\eps^{\frac{k^2+\gamma^2}{2}}, 
\end{align*}
where $C\in (0,\infty)$ is independent of $\eps$. 
\end{remark}

\end{document}